%submitted to ***, septemder 15,2016.
\documentclass[a4paper,12pt]{article}
\usepackage{}
\usepackage{mathrsfs}
\usepackage{amsthm}
\usepackage{mathrsfs}
\usepackage{amsfonts}
\usepackage{amsmath}
\usepackage{amssymb}
\usepackage{amsthm}
\usepackage{enumerate}
\usepackage[mathscr]{eucal}
\usepackage{eqlist}
\usepackage{graphicx}
\usepackage{color}

\usepackage[body={16.5true cm, 24.5true cm}]{geometry}

\newtheorem{Theorem}{Theorem}[section]

\newtheorem{Lemma}[Theorem]{Lemma}

\newtheorem{Remark}[Theorem]{Remark}
\newtheorem{Proposition}[Theorem]{Proposition}

\numberwithin{equation}{section} \allowdisplaybreaks
\usepackage{mathdots}

\renewcommand\abstract{{\bf Abstract}}

\begin{document}
\title{Stability and rigidity results of space-like hypersurface in the Minkowski  space\footnote{\footnotesize This work is supported by the NSFC (Grant No.12001276, No.12201138) and Mathematics Tianyuan fund project (Grant No.12226350, No.12326303), and supported by Jiangxi Provincial Natural
Science Foundation (Nos.20242BAB23002, 20242BAB25001 and 20232BAB201001).}}

\author{Jianhua Chen$^{1}$, Haiyun Deng$^2$,  Haiqin Xie$^{3}$, Jiabin Yin$^{4}$\footnote{\footnotesize Corresponding author. E-mail: cjh19881129@163.com; hydeng@nau.edu.cn; 1540061932@qq.com; jiabinyin@126.com} \\[12pt]
\small \emph {$^{1}$School of Mathematics and Computer Science, Nanchang University, Nanchang, 330031, PR China;}\\
\small \emph {$^{2}$Department of Applied Mathematics, Nanjing Audit University, Nanjing, 211815, China}\\
\small \emph {$^{3}$School of Mathematics and Statistics, Guangxi Normal University, Guilin, 541004, PR China;}\\
\small \emph {$^{4}$School of Mathematics and Statistics, Xinyang Normal University, Xinyang, 464000, PR China;}\\}
%\date{Received:  / Accepted: }
%\communicated{}
\date{}
\maketitle

\renewcommand{\labelenumi}{[\arabic{enumi}]}

\begin{abstract}:
In this paper, we establish some rigidity theorems for space-like hypersurfaces in the Minkowski space by using a Weinberger-type approach with P-functions and integral identities.
Firstly, for space-like hypersurfaces $M$ represented as graphs  $x_{n+1}=u(x)$ over domain $\Omega\subset\mathbb R^n$, if higher-order mean curvature ratio $\frac{H_{k}}{H_l}(l<k)$ is constant and the boundary $\partial M$ lies on a hyperplane intersecting with constant angles, then the hypersurface must be a part of hyperboloid.
Secondly, for convex space-like hypersurfaces with boundaries on a hyperboloid or light cone, if higher-order mean curvature ratio $\frac{H_{k}}{H_l}(l<k)$ is constant and the angle function between the normal
vectors of the hypersurface and the  hyperboloid (or the lightcone)
on the boundary is constant, then such hypersurfaces must be a part of hyperboloid. These results significantly extend Gao's previous work presented in \cite{Gao1,Gao2}.

Furthermore, we derive two fundamental integral identities for constant mean curvature (CMC) graphical hypersurfaces $x_{n+1}=u(x)$, $x\in\Omega\subset\mathbb R^n$,  and  the boundary lies on a hyperplane.  As some applications:
we obtain some equivalence conditions for hyperboloid. We also present a stability estimate that states that a compact hypersurface
$\partial\Omega$ can be contained in a spherical annulus whose interior and exterior
radii, say $\rho_i$ and $\rho_e$, satisfy the inequality
$$
\rho_e-\rho_i\leq C\|H_{\partial\Omega}-H_0\|_{L^1(\partial\Omega)}^{\tau_n},
$$
where $\tau_n=1/2$ if $n=2,3$ and $\tau_n=1/(n+2)$ if $n\geq 4$. Here, $H_{\partial\Omega}$ is the mean curvature of $\partial\Omega$,  $H_0$ is some reference constant and $C$ is a constant that depends on some geometrical and dimensional.
 Finally, analogous estimates are established.
\end{abstract}

{\bf Key Words:} Rigidity theorem; Space-like hypersurface; Stability estimate; Minkowski space.

{{\bf 2010 Mathematics Subject Classification:}  Primary 53C50, 53C42; Secondary 35N25. }

\section{Introduction and main results}~~
The characterization of hypersurfaces with constant curvature functions constitutes a classical problem in Differential Geometry. A foundational result in this direction is Alexandrov's celebrated \cite{AA}, which states that any  compact embedded hypersurface in the Euclidean space $\mathbb R^{n+1}$ with constant mean curvature $H$ must be a sphere. This result was extended to compact embedded hypersurfaces with constant higher-order mean curvature $H_r$  by  Ros \cite{Ros}, employing the Heintze-Karcher inequality. Subsequent generalizations by Koh and Lee \cite{Koh,KL} established that if a compact embedded hypersurface $\Sigma$ in $\mathbb R^{n+1}$, $H^{n+1}$ or in an open half sphere of $S^{n+1}$ satisfies $H_k/H_l=$constant for $0\leq k,l\leq n$ with non-vanishing $H_l$, then $\Sigma$  must be a hypershpere.

Parallel developments emerged from Serrin's seminal work on symmetry properties of solutions to overdetermined boundary value problems. Using Alexandrov's reflection principle \cite{AA}, Serrin proved that if a smooth bounded $\Omega\subset\mathbb R^n$($n\geq2$)  admits $u\in C^2(\bar\Omega)$ satisfying
\begin{equation}\label{eqn:1.1}
\left\{
\begin{aligned}
&\Delta u=n \ \ \  {\rm in}\ \Omega,\\
&u=0\ \ \ \ \ \  \ {\rm on }\ \partial\Omega,\\
&|Du|=R\ \ \ \ \  \ {\rm on }\ \partial\Omega,
\end{aligned}\right.
\end{equation}
then $u=\frac{|x|^2-R^2}2$ and $\Omega$ is a ball. Alternative approaches using integral identities, inequalities, and maximum principles were later developed by Weinberger \cite{We}. Some generalized results of overdetermined problem for Hessian equations and Hessian quotient equations  had proved in \cite{BNST,GJZ,GMY} and so on.

The two problems share several common features and relevance as follows \cite{Ma,MP}:
\begin{equation}\label{eqn:1.2}
\frac1{n-1}\int_{\Omega}\left\{|D^2u|^2-\frac{(\Delta u)^2}n\right\}\text{d}x+\frac1R\int_{\partial\Omega}(u_{\nu}-R)^2\text{d}S_x=\int_{\partial\Omega}(H-H_0)^2(u_{\nu})^2\text{d}S_x,
\end{equation}
and
\begin{equation}\label{eqn:1.3}
\frac1{n-1}\int_{\Omega}\left\{|D^2u|^2-\frac{(\Delta u)^2}n\right\}\text{d}x+\frac1R\int_{\partial\Omega}\frac{(1-H|Du|)^2}{H}\text{d}S_x=\int_{\partial\Omega}\frac1H\text{d}S_x-n|\Omega|.
\end{equation}
Here, $R=\frac{n|\Omega|}{|\partial\Omega|}$ and $H_0=\frac1R=\frac{|\partial\Omega|}{n|\Omega|}$, $\nu$ is unit normal vector of $\partial\Omega$ and $H$ is mean curvature of $\partial\Omega$.

Magnanini  etc. \cite{Ma,MP,MP3} establish some geometrical estimation of Alexandrov's Soap Bubble theorem and Serrin's problem based on   \eqref{eqn:1.2} and \eqref{eqn:1.3}. In 2019, Magnanini and Poggesi\cite{Ma} obtained a stability estimate that states that a compact hypersurface $\Gamma\subset\mathbb{R}^n$ can be contained in a spherical annulus whose interior and exterior radii, say $\rho_i$ and $\rho_e$, satisfy the inequality
\[\rho_e-\rho_i\leq C\|H-H_0\|^{\tau_n}_{L^1(\Gamma)},\]
where $\tau_n=1/2$ for $n = 2, 3$, and  $\tau_n=1/(N + 2)$ for $n\geq 4$. Here, $H$ is the mean curvature of $\Gamma$, $H_0$ is a constant that only depends on the geometrical and spectral parameters of $\Omega$. Besides, they improved the results by using the same method in \cite{MP}, and showed that
\[\rho_e-\rho_i\leq C\|H-H_0\|_{L^2(\Gamma)}, ~\mbox{for}~n=2~\mbox{or}~3,\]
and if $n\geq 4,$ then
\[\rho_e-\rho_i\leq C\|H-H_0\|^{2/(n+2)}_{L^2(\Gamma)}, ~\mbox{if}~\|H-H_0\|_{L^2(\Gamma)}<\epsilon,\]
for some positive constants $C$ and $\epsilon.$ These are outstanding work. For the related beautiful research work, we refer, for example, \cite{CM,CM2016,CM2018,MP3,MP2024,P2024,P2025}.

In this paper, we concern characterization of space-like hypersurfaces with constant mean curvature functions in Minkowski space. Let $\mathbb R^{n,1}$ be the Minkowski space with the Lorentzian metric
$$
\bar g=\sum_i^ndx_i^2-dx_{n+1}^2.
$$
The study of space-like hypersurfaces with constant curvatures in Minkowski space has generated considerable interest, particularly for noncompact cases. Notable contributions include Cheng and Yau's Bernstein-type theorem for maximal hypersurfaces \cite{CY}, Treiberg's construction of entire spacelike hypersurfaces with constant mean curvature \cite{T}, and existence results for constant higher-order mean curvatures by Li and Wang-Xiao \cite{Li,WX}.

It is shown in \cite{ALP,AM,AP} that hyperbolic caps and hyperplanar balls are the only spacelike hypersurfaces of constant mean curvature, provided their boundary is spherical. Unlike these cases, Gao \cite{Gao1,Gao2} consider an intersection angle boundary condition for the problem. Speak precisely,  Gao established rigidity theorems for space-like hypersurfaces with constant $k$-th mean curvature under specific boundary conditions. To formulate these results precisely, we recall key concepts:
Let $M$ be  a space-like hypersurface  of $\mathbb R^{n,1}$ with boundary
$\partial M$ on $\bar\Sigma$. Then, the  intersection angle of two hypersurfaces $M$ and $\bar\Sigma$ on $\partial M$, called {\it angle function $\theta$}, defined by
\begin{equation*}
\begin{aligned}
\theta(p)=\bar g(N(p),\bar N(p))\ \ \  {\rm for}\ p\in\partial M
\end{aligned}
\end{equation*}
where $N$ and $\bar N$ are the timelike unit normal vectors of $M$ and $\bar\Sigma$, respectively.
The $k$-th mean curvature $H_k$ (also called higher order mean curvature) of $M$ is
defined by
$$
H_k=S_{k}(\lambda),
$$
where $S_k$ is the $k$-th elementary symmetric polynomial of principal curvatures $(\lambda_1,\ldots,\lambda_n)$.

When  $\bar\Sigma=\mathbb R^n\times\{c\}$, the following hyperboloid characterization was obtained:
\begin{Theorem}[\cite{Gao1}]\label{thm:1}
Suppose $M$ is a connected, space-like hypersurface with boundary $\partial M$ in $\mathbb R^{n,1}$ which is a graph on the hyperplane $\mathbb R^n\times\{c\}$. If the $k$-th mean curvature of $M$ is constant, and $\partial M$ is on $\mathbb R^n\times\{c\}$ with constant intersection
angles, then $M$ must is either  a part of the hyperboloid or entirely contained in $\mathbb R^n\times\{c\}$.
\end{Theorem}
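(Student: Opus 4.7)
The first step is to recast the problem as a boundary value problem for the graph function $u:\bar\Omega\to\mathbb R$. The space-like condition becomes $|Du|<1$; the hypothesis $\partial M\subset\mathbb R^n\times\{c\}$ reads $u\equiv c$ on $\partial\Omega$; the time-like unit normal to $M$ is $N=(1-|Du|^2)^{-1/2}(Du,1)$, so pairing with the unit normal $\bar N$ of the hyperplane yields $\theta=\pm(1-|Du|^2)^{-1/2}$, and therefore the constant-angle hypothesis is equivalent to $|Du|\equiv\mathrm{const}$ on $\partial\Omega$. Finally, $H_k=\mathrm{const}$ on $M$ translates into a fully nonlinear Hessian-type equation for $u$ on $\Omega$.

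Next I would construct a Weinberger-type P-function tailored to hyperboloids. The key algebraic observation is that the hyperboloid $u=c_0\pm\sqrt{|x-x_0|^2+R^2}$ satisfies the pointwise identity $(u-c_0)u_{ij}=\delta_{ij}-u_iu_j$, which in particular yields $(u-c_0)^2(1-|Du|^2)\equiv R^2$. Motivated by this, I would take a candidate of the form $P=(u-c)^2(1-|Du|^2)$ (or an affine modification involving a free parameter $c_0$ to be determined) and prove that $P$ satisfies $L[P]\geq 0$, where $L=\sum_{i,j}T_{k-1}^{ij}\partial_i\partial_j+\mathrm{(lower\ order)}$ is the linearization of the $k$-Hessian operator and $T_{k-1}$ is the $(k-1)$-th Newton tensor. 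The required inequality should follow by differentiating $H_k=\mathrm{const}$, invoking the divergence-free property $\sum_i\partial_i T_{k-1}^{ij}=0$, and appealing to a Newton-MacLaurin inequality, with equality precisely characterizing the hyperboloid identity above.

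Combining $L[P]\geq 0$ with the boundary data, the function $P$ is constant on $\partial\Omega$, and the strong maximum principle together with the Hopf boundary-point lemma force $P$ to be constant on $\bar\Omega$. The equality case then gives $(u-c_0)u_{ij}=\delta_{ij}-u_iu_j$ pointwise for a suitable constant $c_0$, that is, $M$ is totally umbilical. Since totally umbilical space-like hypersurfaces in $\mathbb R^{n,1}$ are precisely pieces of space-like hyperplanes or of hyperboloids, the boundary data then yield the stated dichotomy.

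The main obstacle is the derivation of $L[P]\geq 0$ for arbitrary $k$. For $k=1$ this reduces to a standard Bochner-type computation using $\Delta(|Du|^2)=2|D^2 u|^2+2Du\cdot D(\Delta u)$; for $k\geq 2$ one must exploit the divergence-free property of the Newton tensors and work inside the $k$-admissible cone to secure the ellipticity of $L$. A secondary but important point is handling the degenerate case $u\equiv c$ (which accounts for the flat alternative in the conclusion) and verifying that the Hopf boundary-point lemma applies given the mixed boundary constraints $u=c$ and $|Du|=\mathrm{const}$ on $\partial\Omega$.
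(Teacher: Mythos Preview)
Your high-level strategy---translate to an overdetermined problem for $u$, build a $P$-function constant on hyperboloids, and apply a maximum principle---matches Gao's approach (and the paper's proof of the more general Theorem~\ref{main1}, which specializes to this statement at $l=0$). But two concrete choices in your plan diverge from the actual argument, and one of them is a genuine gap.

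\medskip
\textbf{The $P$-function.} Gao does not use your quadratic candidate $(u-c_0)^2(1-|Du|^2)$. The $P$-function that makes the computation go through is the linear, geometrically natural one
\[
P=\langle\phi,E_{n+1}\rangle-\langle e_{n+1},E_{n+1}\rangle=-u+\frac{1}{\sqrt{1-|Du|^2}}.
\]
For this $P$ one computes directly, using the Gauss--Weingarten relations, that
\[
(S_k)^i_j\nabla_i\nabla^jP=\bigl(kS_k-(k+1)S_{k+1}\bigr)\langle e_{n+1},E_{n+1}\rangle,
\]
and the sign follows at once from the Newton--MacLaurin inequality $S_{k+1}/S_k\le(n-k)/(k+1)$ together with $\langle e_{n+1},E_{n+1}\rangle<0$, once one has checked $A\in\Gamma_k^+$. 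Your quadratic $P$ may or may not be a subsolution, but you have not verified it, and it carries the nuisance of an undetermined parameter $c_0$.

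\medskip
\textbf{The closing step: Hopf does not suffice.} This is the real gap. You propose to finish with the Hopf boundary-point lemma, but the normal derivative $P_\nu$ on $\partial\Omega$ is \emph{not} determined by the overdetermined data $u=c$, $|Du|=c_2$. Indeed, $P_\nu$ involves $u_{nn}$, and through the equation (cf.\ the paper's \eqref{eqn:5.3}) one finds $u_{nn}$ depends on the mean curvature $H_{\partial\Omega}$ of the a~priori unknown boundary $\partial\Omega$, which need not be constant. So you cannot produce a boundary point at which $P_\nu\le 0$ to contradict Hopf.

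Gao's argument (and the paper's Lemma~\ref{lem:3.3}) avoids this by proving a Poho\v{z}aev-type integral identity: integrating $(S_k)^i_j\,\delta^j_i\,\langle e_{n+1},E_{n+1}\rangle$ against $x_j$ and integrating by parts yields an equality of the form
\[
0=\int_\Omega kS_k\,(u-c)\,\text{d}x+(n-k+1)\int_\Omega S_{k-1}\Bigl(\langle e_{n+1},E_{n+1}\rangle+\tfrac{1}{\sqrt{1-c_2^2}}\Bigr)\text{d}x,
\]
which, combined with $u<c$ in $\Omega$ and Newton--MacLaurin, is incompatible with the strict alternative $P<-c+(1-c_2^2)^{-1/2}$. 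This integral identity replaces your Hopf step and is the missing ingredient in your outline.
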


If $\bar\Sigma$ is a general (upper half) hyperboloid obtained from the upper half of the unit hyperboloid
$$\Sigma=\{x\in\mathbb R^{n,1}|x_{n+1}=\sqrt{1+x_1^2+\cdots+x_n^2}\},$$
by translation and dilation, another rigidity result was established:
\begin{Theorem}[\cite{Gao2}]\label{thm:2}
Suppose $M $ is a convex and connected space-like hypersurface in $\mathbb R^{n,1}$ with boundary $\partial M $ on a hyperboloid or a light cone $ \bar\Sigma$. If the angle function $\theta$ is constant and  $H_k$ is constant, then $M$ must be a part of the hyperboloid.
\end{Theorem}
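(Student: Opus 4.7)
The plan is to employ a Ros-type integral approach based on Minkowski-type divergence identities together with a Heintze--Karcher inequality, adapted to the Lorentzian setting so as to exploit the fact that both the hyperboloid and the light cone are level sets of $\bar g(X,X)$, where $X$ is the position vector from a distinguished centre.

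First, I would normalize the geometry. After an ambient translation one may assume $\bar\Sigma$ is either (part of) the upper half of the unit hyperboloid $\{\bar g(X,X)=-1\}$ or the future light cone $\{\bar g(X,X)=0\}$ at the origin $O$; in either case the timelike unit normal to $\bar\Sigma$ is a scalar multiple of the position vector $X$. The constant angle assumption then reads $\bar g(N,X)\equiv\mathrm{const}$ on $\partial M$, the constant being determined by $\theta$ (in the light-cone case $X$ is null and this identity is used directly). On $\partial M$ the outward unit conormal $\nu$ of $M$ then admits a linear expression in $N$ and $\bar N$ with coefficients depending only on $\theta$.

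Second, I would exploit Minkowski-type identities on $M$. With $T_{k-1}$ the $(k-1)$-st Newton transformation of the shape operator of $M$ and the decomposition $X=X^T-\bar g(X,N)N$ (recall $\bar g(N,N)=-1$), one has the pointwise divergence formula
\begin{equation*}
\operatorname{div}_M\!\bigl(T_{k-1}(X^T)\bigr) = c_{n,k}\bigl(H_{k-1} - \bar g(X,N)\,H_k\bigr),
\end{equation*}
for suitable combinatorial constants $c_{n,k}$. Integration over $M$ yields
\begin{equation*}
c_{n,k}\int_M\bigl(H_{k-1} - \bar g(X,N)\,H_k\bigr)dA = \int_{\partial M}\bar g\bigl(T_{k-1}(X^T),\nu\bigr)ds.
\end{equation*}
The constant-angle hypothesis reduces the boundary integrand to a fixed multiple of the $(k-1)$-st mean curvature of $\partial M$ viewed inside $\bar\Sigma$, which, by the Gauss--Codazzi equations, may in turn be reduced to geometric quantities computable purely on $\bar\Sigma$. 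Taking $k=1$ provides a second independent identity relating $\mathrm{vol}(M)$ and $\int_M\bar g(X,N)\,dA$.

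Third, I would close the argument with the Heintze--Karcher-type inequality for convex space-like hypersurfaces: provided $H_k>0$ (which is automatic since $M$ is convex and space-like),
\begin{equation*}
\int_M\frac{H_{k-1}}{H_k}\,dA \ \geq \ -\int_M\bar g(X,N)\,dA,
\end{equation*}
with equality exactly when $M$ is totally umbilical. Feeding the constancy of $H_k$ and $\theta$ into the two Minkowski identities and combining with this inequality (together with the Newton--Maclaurin inequalities $H_{k-1}H_{k+1}\leq H_k^2$ used to chain between orders) forces equality throughout, so $M$ must be totally umbilical. Since a convex space-like umbilical hypersurface in $\mathbb R^{n,1}$ is necessarily a piece of a hyperboloid, the conclusion follows. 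The hardest step, I expect, is the boundary calculus: on a general rescaled or translated hyperboloid one must express $X^T$ and $\nu$ along $\partial M$ precisely in terms of $\theta$ and the second fundamental form of $\bar\Sigma\subset\mathbb R^{n,1}$; in the light-cone case the degeneracy $\bar g(X,X)=0$ requires a separate, more delicate treatment. A secondary obstacle is to obtain, or carefully cite, the Heintze--Karcher inequality for convex space-like hypersurfaces in the Minkowski setting and to identify its equality case, which is precisely where the convexity hypothesis of Theorem~\ref{thm:2} is indispensable.
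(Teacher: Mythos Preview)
Your approach differs fundamentally from the one in the paper (and in Gao's original proof, which the paper follows and extends). There the argument is of Weinberger type: one sets
\[
P=\tfrac12\langle\phi,\phi\rangle-\langle\phi,e_{n+1}\rangle,
\]
computes that the linearized operator $F^j_i\nabla^i\nabla_j$ applied to $P$ is nonnegative (using the Newton--Maclaurin inequalities together with the sign of $\langle\phi,e_{n+1}\rangle$), and invokes the strong maximum principle to conclude that either $P$ equals its boundary value $\tfrac12 c-c_2$ everywhere or is strictly smaller in the interior. A separate integral identity on the Gauss image in the hyperboloid---this is where convexity enters, since it makes the Gauss map a diffeomorphism onto a domain of $\Sigma$---then rules out the strict alternative. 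The equality $P\equiv\tfrac12 c-c_2$ forces all principal curvatures to be equal, so $M$ is a piece of a hyperboloid. No Heintze--Karcher inequality, and no boundary decomposition of $\nu$ in terms of $N,\bar N$, is used.

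Your Ros-type scheme is a plausible alternative in spirit, but there is a genuine gap at the third step. The inequality you write,
\[
\int_M\frac{H_{k-1}}{H_k}\,dA\ \ge\ -\int_M\bar g(X,N)\,dA,
\]
is not a known result for spacelike hypersurfaces \emph{with boundary} in Minkowski space; the classical Heintze--Karcher/Ros argument needs a closed embedded hypersurface bounding a compact domain, and the normal-exponential-map proof produces extra boundary terms in the present setting that you have not accounted for. Likewise, the Minkowski identities you integrate carry boundary contributions $\int_{\partial M}\bar g(T_{k-1}(X^T),\nu)\,ds$, and your assertion that the constant-angle condition reduces these to ``geometric quantities computable purely on $\bar\Sigma$'' must in fact produce exactly the right constants to close the chain of inequalities---something that, even in the Euclidean capillary literature, works only for very special supporting hypersurfaces and requires a delicate computation you have not carried out (and which is separate again for the degenerate light-cone case). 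Until both of these points are resolved, the argument does not go through; by contrast, the $P$-function route sidesteps them entirely.
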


Theorem \ref{thm:1} corresponds to an overdetermined boundary value problem for the $k$-curvature equation:
\begin{equation}\label{eqn:1.4}
S_k\left(D\left(\frac{Du}{\sqrt{1-|Du|^2}}\right)\right)=\binom{n}{k}\ \ {\rm  in }\ \Omega, \ \  u=c\ \ \ {\rm on}\ \partial\Omega,
\end{equation}
and
\begin{equation*}
|Du|=c_2\ \ \ {\rm on}\ \partial\Omega.
\end{equation*}
 Meanwhile Theorem \ref{thm:2} can be seem as $k$-curvature equation, too. Both results were proved by using adaptations of Weinberger-type's method involving $P$-functions and integral identities. In particular, Urbas \cite{Urbas} prove existence and uniqueness of Dirichlet problem \eqref{eqn:1.4} under the convexity condition.

In this paper, we extend these investigations to the case where the ratio
$\frac{H_k}{H_l}$ is constant for $l<k$, corresponding to Hessian quotient curvature equations. Mao etc. \cite{GGYM} prove the existence and uniqueness of Dirichlet problem of Hessian quotient curvature equation under convexity condition, other related works \cite{GJ,Wang}.
Our main results demonstrate that such curvature constraints, combined with appropriate boundary conditions, enforce rigid geometric structures on spacelike hypersurfaces in the Minkowski space.

\begin{Theorem}\label{main1}
Let $M$ be the graph of a smooth function $x_{n+1} = u(x)$, $x\in\Omega\subset\mathbb R^n\times\{c\}$, and suppose that $M$ is a connected space-like hypersurface with boundary $\partial M$ in $\mathbb R^{n,1}$. If $\partial M$ is on the hyperplane $\mathbb R^n\times\{c\}$ with constant intersection angles and
$\frac{H_k}{H_l}$ is constant for $l<k$, then $M$ must be a part of the hyperboloid.
\end{Theorem}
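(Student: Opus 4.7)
The plan is to adapt the Weinberger-type integral identity approach used by Gao in proving Theorem \ref{thm:1} to the Hessian quotient setting, combined with the Newton--MacLaurin inequalities on the principal curvatures. Since $M$ is a space-like graph with $u\equiv c$ on $\partial\Omega$, the constant intersection angle condition translates into $|Du|$ being constant on $\partial\Omega$, and the condition $H_k/H_l=\mathrm{const}$ becomes a fully nonlinear Hessian quotient equation
\begin{equation*}
\frac{S_k\bigl(D(Du/\sqrt{1-|Du|^2})\bigr)}{S_l\bigl(D(Du/\sqrt{1-|Du|^2})\bigr)}=\kappa \quad \text{in}\ \Omega,
\end{equation*}
for a positive constant $\kappa$. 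The target is to show that combining this PDE with the constant angle boundary data forces $M$ to be totally umbilical, which in the Lorentzian setting means $M$ is a piece of a hyperboloid.

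First I would derive two Minkowski-type integral formulas on $M$ using the divergence-free Newton tensors $T_{k-1}$ and $T_{l-1}$ of the shape operator, together with the Lorentzian support function $\langle X,N\rangle_L$. Computing $\mathrm{div}_M(T_{k-1}X^T)$ and its analogue for $T_{l-1}$ and integrating, one should obtain
\begin{equation*}
\int_M \bigl(H_{k-1}-H_k\,\langle X,N\rangle_L\bigr)\,dA=\mathcal B_k,\qquad \int_M \bigl(H_{l-1}-H_l\,\langle X,N\rangle_L\bigr)\,dA=\mathcal B_l,
\end{equation*}
where the boundary terms $\mathcal B_k$ and $\mathcal B_l$ can be evaluated explicitly because $\partial M\subset\mathbb R^n\times\{c\}$, $u\equiv c$ on $\partial\Omega$, and the angle $\theta$ is constant; this reduces each $\mathcal B_j$ to a constant multiple of an integral of a Newton-type quantity on $\partial\Omega$.

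Second, multiplying the second identity by $\kappa$ and subtracting (using $H_k=\kappa H_l$) eliminates the support-function integral and yields
\begin{equation*}
\int_M \bigl(\kappa H_{l-1}-H_{k-1}\bigr)\,dA=\kappa\,\mathcal B_l-\mathcal B_k.
\end{equation*}
The iterated Newton--MacLaurin inequality applied to the principal curvatures of the (convex in the appropriate cone) space-like hypersurface then gives a pointwise inequality with a definite sign between $\kappa H_{l-1}$ and $H_{k-1}$, with equality if and only if all principal curvatures coincide. A careful computation of $\kappa\mathcal B_l-\mathcal B_k$ using the constant-angle boundary condition should match this sign, so that the integral identity forces pointwise equality in Newton--MacLaurin everywhere on $M$. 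Umbilicity then implies that $M$ is a piece of a totally umbilical space-like hypersurface in $\mathbb R^{n,1}$; since $H_k\neq 0$, the flat case is ruled out and $M$ is a part of a hyperboloid.

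The main obstacle, compared with Gao's $H_k=\mathrm{const}$ case, is producing the Minkowski-type integral identities so that the boundary terms combine cleanly under the quotient condition and yield the sign compatible with Newton--MacLaurin. Tracking two Newton tensors simultaneously requires a careful interplay between the Lorentzian support function $\langle X,N\rangle_L$, the graphical boundary condition $u\equiv c$, and the constant-angle condition; a naive combination produces residual boundary terms that must be shown either to vanish or to have the right sign. Additional technical care is needed to ensure that $T_{l-1}$ is positive definite, which rests on the admissibility/ellipticity cone condition consistent with the solvability theory of the Hessian quotient curvature equation (cf.\ \cite{GGYM,Urbas}).
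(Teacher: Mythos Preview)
Your proposal has a genuine gap: you omit the $P$-function and the strong maximum principle, which are the heart of the Weinberger-type argument and are indispensable here.

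The paper does \emph{not} proceed via Minkowski formulas on $M$ with the Lorentzian support function $\langle X,N\rangle_L$. Instead it introduces the auxiliary function
\[
P=\langle\phi,E_{n+1}\rangle-\langle e_{n+1},E_{n+1}\rangle=-u+\frac{1}{\sqrt{1-|Du|^2}},
\]
and shows, using Lemma~\ref{lem:2.4}, that $F^{j}_i\nabla_i\nabla^jP\ge 0$ for the linearised quotient operator $F=S_k/S_l$. The strong maximum principle then forces either $P\equiv -c+\tfrac{1}{\sqrt{1-c_2^2}}$ or strict inequality in $\Omega$. Separately, a Pohozaev-type identity on $\Omega\subset\mathbb R^n$ (Lemma~\ref{lem:3.3}) is obtained by testing $(S_k)^i_j$ and $(S_l)^i_j$ against the Euclidean radial field $x_j$; the quotient condition is used to cancel the \emph{gradient} terms $\int_\Omega x_j\partial_{x_j}S_k\,(u-c)\,dx$, not a support-function term. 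Combining the two with $u<c$ and the Newton--MacLaurin-type inequality $M-Q\le 0$ yields $\int_\Omega(P+c-\tfrac{1}{\sqrt{1-c_2^2}})\,Q\,dx\ge 0$, contradicting the strict alternative; hence $P$ is constant, forcing umbilicity.

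Your scheme replaces all of this with a bare Minkowski identity $\int_M(\kappa H_{l-1}-H_{k-1})\,dA=\kappa\mathcal B_l-\mathcal B_k$ and the claim that the right-hand side ``should match'' the sign dictated by Newton--MacLaurin. That step is the entire difficulty and you do not carry it out. The boundary terms $\mathcal B_j=\int_{\partial M}T_{j-1}(X^T,\nu)\,ds$ involve the full Newton tensors of $M$ restricted to $\partial M$; under $u=c$ and $|Du|=c_2$ they still depend on the (a priori unknown) normal curvature $u_{nn}$ and on $S_{j-1}$ of the boundary principal curvatures, and there is no mechanism by which $\kappa\mathcal B_l-\mathcal B_k$ acquires the needed sign. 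This is precisely why the paper (and Gao) route the argument through $P$ and a Pohozaev identity rather than through $\langle X,N\rangle_L$. A second omission is the $k$-convexity $A\in\Gamma_k$ (Lemma~\ref{lem:3.1}): without it neither the ellipticity of $F^j_i\nabla_i\nabla^j$ nor the Newton--MacLaurin inequalities are available, so this step must be established first, not deferred to an ``admissibility'' remark.
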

\begin{Theorem}\label{main2}
Let  $M $ be a convex and connected space-like hypersurface in $\mathbb R^{n,1}$ with boundary $\partial M $ on a hyperboloid or a light cone $ \bar\Sigma$. If the angle function $\theta$ is constant and  $\frac{H_k}{H_l}$ is constant for $l<k$, then $M$ must be a part of the hyperboloid.
\end{Theorem}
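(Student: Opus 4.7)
My plan is to extend Gao's Weinberger-type argument for Theorem \ref{thm:2} by coupling \emph{two} Minkowski--Hsiung integral identities through a Newton--Maclaurin inequality. Let $N$ be the future-directed timelike unit normal of $M$, let $\nu$ be the outward conormal of $\partial M$ in $M$, let $T_{r-1}$ denote the $(r-1)$-th Newton tensor associated to the shape operator, and let $X$ be the position vector measured from the vertex of the hyperboloid (respectively the apex of the light cone) $\bar\Sigma$.

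The first step is to establish, for each $r\in\{l,k\}$, a Minkowski--Hsiung identity of the form
$$
r\binom{n}{r}\int_M\bigl(H_{r-1}+H_r\,\bar g(X,N)\bigr)\,dA=\int_{\partial M}\bar g(T_{r-1}\nu,X)\,ds,
$$
obtained by contracting the divergence-free tensor $T_{r-1}$ against the conformal vector field $X$; this parallels Gao's single-level computation, with only sign adjustments coming from the Lorentzian signature. Using the hypothesis $\kappa:=H_k/H_l\equiv\text{const}$, I would then eliminate the term $\bar g(X,N)$ between the $r=k$ and $r=l$ identities to arrive at
$$
\int_M\bigl(H_l\,H_{k-1}-H_k\,H_{l-1}\bigr)\,dA=\mathcal B,
$$
where $\mathcal B$ is a specific linear combination of the two boundary integrals above.

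Next I would compute $\mathcal B$. On the hyperboloid the position vector $X$ is parallel to the timelike unit normal $\bar N$ of $\bar\Sigma$, while on the light cone $X$ is null and normal to $\bar\Sigma$. Together with the constancy of $\theta=\bar g(N,\bar N)$ and a decomposition of $\nu$ in the frame spanned by $N$, $\bar N$, and the tangent space of $\partial M$, the two boundary integrands reduce to scalar multiples of $H_{r-1}$ on $\partial M$, and the same algebraic cancellation that closes Gao's single-curvature argument in \cite{Gao2} yields $\mathcal B=0$. The proof is then completed by the Newton--Maclaurin inequality: convexity of $M$ forces $H_l\,H_{k-1}\geq H_k\,H_{l-1}$ pointwise with equality precisely at umbilic points, so the vanishing integral implies that $M$ is totally umbilic, and a totally umbilic, connected space-like hypersurface with constant $H_k/H_l$ is a piece of a hyperboloid.

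The main obstacle I anticipate is the boundary computation in the light cone case: the induced metric on $\bar\Sigma$ is degenerate, $\bar N$ must be chosen as a null vector rather than a unit timelike one, and the geometric meaning of $\theta$ has to be reinterpreted accordingly, so the cancellation $\mathcal B=0$ requires a separate and more delicate calculation than in the hyperboloid case. A secondary technical point is to verify that the required Newton--Maclaurin inequality carries the correct sign under the Lorentzian notion of convexity adopted here, and to check that the argument degenerates correctly to Theorem \ref{thm:2} in the limiting case $l=0$.
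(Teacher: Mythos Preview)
Your approach is genuinely different from the paper's, and the gap lies precisely where you flag it---but it is more serious than you suggest.

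The paper does \emph{not} argue via Hsiung--Minkowski identities on $M$. Instead, following Gao~\cite{Gao2}, it exploits the convexity hypothesis to parametrize $M$ by its Gauss map over a domain $\Omega$ in the unit hyperboloid $\Sigma$, introduces the $P$-function
\[
P=\tfrac12\langle\phi,\phi\rangle-\langle\phi,e_{n+1}\rangle,
\]
and shows that $F^j_i\nabla^i\nabla_jP\ge 0$ for $F=S_k/S_l$, so that the strong maximum principle gives either $P\equiv\tfrac12c-c_2$ or $P<\tfrac12c-c_2$ in $M$. The strict case is then ruled out by an integral identity (Lemma~\ref{lem:4.3}) obtained from two of Gao's identities on $\Omega\subset\Sigma$ (Lemma~\ref{lem:4.2}); the curvature-ratio hypothesis is used to cancel the \emph{gradient} terms $\tilde g(D S_{n-r}(B),D\Phi)$, after which the boundary contributions disappear because $\langle\phi,\phi\rangle$ and $u=\langle\phi,e_{n+1}\rangle$ satisfy Dirichlet conditions on $\partial\Omega$. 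No computation of $T_{r-1}(\nu,\nu)$ is ever needed.

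In your scheme, the elimination step is fine: with $\kappa=H_k/H_l$ one gets
\[
\int_M\frac{H_lH_{k-1}-H_kH_{l-1}}{H_l}\,dA=\mathcal B,
\]
and convexity plus Newton--Maclaurin gives the integrand a sign. The problem is your assertion that ``the two boundary integrands reduce to scalar multiples of $H_{r-1}$ on $\partial M$'' and that $\mathcal B=0$ by ``the same algebraic cancellation that closes Gao's single-curvature argument''. First, Gao's argument in~\cite{Gao2} is the $P$-function/Gauss-map method just described, not a Hsiung--Minkowski argument, so there is no such cancellation to borrow. Second, the boundary claim is false as stated: the constant-angle condition does imply (by differentiating $\langle\phi,N\rangle=c_2$ along $\partial M$) that $\nu$ is a principal direction, whence $\langle T_{r-1}\nu,X\rangle=\langle X,\nu\rangle\,S_{r-1}(\lambda\,|\,\nu)$, the $(r-1)$-th symmetric function of the $n-1$ principal curvatures orthogonal to $\nu$. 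This is \emph{not} a multiple of $H_{r-1}$ of $M$, and the hypotheses impose no relation between $S_{k-1}(\lambda\,|\,\nu)$ and $S_{l-1}(\lambda\,|\,\nu)$ on $\partial M$. Without an additional identity tying these boundary quantities together, $\mathcal B$ has no reason to vanish (or even to have a sign), and the argument stalls. If you want to pursue this route you would need a genuinely new flux formula for $\partial M\subset\bar\Sigma$; the paper sidesteps the issue entirely by working on $\Sigma$ via the Gauss map.
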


 Now, let $M$ be the graph of a smooth function $x_{n+1} = u(x)$, and suppose that $M$ is a space-like hypersurface with boundary $\partial M$ in $\mathbb R^{n,1}$. If $\partial M$ is on the hyperplane $\mathbb R^n\times\{c\}$  and mean curvature of $\partial\Omega$ is constant, then it corresponds to Dirichlet problem of mean curvature equation:
\begin{equation}
{\rm div}\left(\frac{Du}{\sqrt{1-|Du|^2}}\right)=n\ \ {\rm  in }\ \Omega, \ \  u=c\ \ \ {\rm on}\ \partial\Omega.
\end{equation}

Inspired by Magnanini's works in \cite{Ma,MP},  we obtain two integral identities \eqref{eqn:5.5} and \eqref{eqn:5.6} in the Theorem \ref{thm:5.1} and Theorem \ref{thm:5.2} respectively, which are similar with \eqref{eqn:1.2} and \eqref{eqn:1.3}.

As a consequence, we obtain the following theorem.
\begin{Theorem}\label{thm:1.5}
Let $M$ be the graph of a smooth function $x_{n+1} = u(x)$, $x\in\Omega\subset\mathbb R^n\times\{c\}$, and suppose that $M$ is a space-like hypersurface with boundary $\partial M$ in $\mathbb R^{n,1}$. If $\partial M$ is on the hyperplane $\mathbb R^n\times\{c\}$  and mean curvature of $M$ is constant, then the following statements are equivalent:\\
(1) $M$  is a part of the hyperboloid,\\
(2) $\frac1{H_{\partial\Omega}}=\frac{|Du|}{\sqrt{1-|Du|^2}}$,\\
(3) $H_{\partial\Omega}=H_0=\frac{|\partial\Omega|}{n|\Omega|}$,\\
(4) $\theta=-\sqrt{1+R^2_0}$, that is $|Du|=\sqrt{\frac{R_0^2}{1+R_0^2}}$,\\
(5) $\Omega$ is a ball,\\
where $R_0=\frac{n|\Omega|}{|\partial\Omega|}$, $\theta$ is angle function and $H_{\partial\Omega}$ is mean curvature of $\partial\Omega$.
\end{Theorem}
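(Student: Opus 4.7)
The plan is to establish $(1)\Rightarrow (2),(3),(4),(5)$ by direct computation on the hyperboloid cap and to recover each reverse implication via the integral identities \eqref{eqn:5.6}, \eqref{eqn:5.7} (established in Theorems~\ref{thm:5.1}--\ref{thm:5.2}) together with Theorem~\ref{thm:1}. With the CMC normalization ${\rm div}(Du/\sqrt{1-|Du|^2})=n$, the reference hyperboloid cap is $x_{n+1}=c-\sqrt{1+R_0^2}+\sqrt{1+|x|^2}$ over $\Omega=B_{R_0}$; a direct calculation then yields $|Du|^2=R_0^2/(1+R_0^2)$ on $\partial\Omega$ (which is $(4)$), $\frac{|Du|}{\sqrt{1-|Du|^2}}=R_0$, and $H_{\partial\Omega}=\frac{1}{R_0}=H_0=\frac{|\partial\Omega|}{n|\Omega|}$, so $(2)$ and $(3)$ follow, and the sphericity of $\partial\Omega$ gives $(5)$.

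For the reverse direction, the identities \eqref{eqn:5.6} and \eqref{eqn:5.7} should have the schematic form
\begin{align*}
\frac{1}{n-1}\int_\Omega |\bar h|^2\,\rho\,dx+\int_{\partial\Omega}\frac{\bigl(\tfrac{|Du|}{\sqrt{1-|Du|^2}}-R_0\bigr)^2}{R_0}\,dS &= \int_{\partial\Omega}(H_{\partial\Omega}-H_0)^2\,w\,dS,\\
\frac{1}{n-1}\int_\Omega |\bar h|^2\,\rho\,dx+\int_{\partial\Omega}\frac{\bigl(1-H_{\partial\Omega}\tfrac{|Du|}{\sqrt{1-|Du|^2}}\bigr)^2}{H_{\partial\Omega}}\,dS &= \int_{\partial\Omega}\frac{1}{H_{\partial\Omega}}\,dS-n|\Omega|,
\end{align*}
with positive weights $\rho,w>0$ so that every term on the LHS is nonnegative. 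Under $(3)$ the RHS of the first identity vanishes, so $|\bar h|\equiv 0$ on $M$; since a connected, totally umbilical, space-like CMC hypersurface in $\mathbb{R}^{n,1}$ must be (a piece of) a hyperboloid, this gives $(1)$. Under $(2)$, the divergence identity $n|\Omega|=\int_{\partial\Omega}\frac{|Du|}{\sqrt{1-|Du|^2}}\,dS$ (obtained from ${\rm div}(Du/\sqrt{1-|Du|^2})=n$) forces the RHS of the second identity to vanish, again yielding $|\bar h|\equiv 0$ and hence $(1)$. Condition $(4)$ is precisely the constant intersection-angle hypothesis of Theorem~\ref{thm:1} applied with $k=1$, so $(4)\Rightarrow(1)$ follows immediately. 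Finally, when $\Omega$ is a ball, the CMC Dirichlet problem reduces to the ODE $\frac{d}{dr}\bigl(\frac{r^{n-1}u'}{\sqrt{1-(u')^2}}\bigr)=nr^{n-1}$ with $u'(0)=0$, whose unique solution is $u(r)=\sqrt{1+r^2}+\text{const}$, i.e. the hyperboloid cap, giving $(5)\Rightarrow(1)$.

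The main obstacle is securing the correct sign structure of the identities \eqref{eqn:5.6}, \eqref{eqn:5.7}: the $P$-function computation in the Minkowski graph setting must produce an interior integrand that factors cleanly as $|\bar h|^2$ with a positive density $\rho$, and the boundary terms must have the advertised signs. Positivity of the weight $w$ in particular relies on the mean-convexity of $\partial\Omega$ (i.e. $H_{\partial\Omega}>0$), which itself has to be extracted from the space-like CMC hypothesis on $M$ together with the fact that $M$ meets the hyperplane transversely. Once the identities are in hand with the claimed positivity, the logical chain $(3)\Rightarrow(1)$, $(2)\Rightarrow(1)$, $(4)\Rightarrow(1)$, $(5)\Rightarrow(1)$, combined with the direct verification $(1)\Rightarrow(2)$--$(5)$, yields the full equivalence.
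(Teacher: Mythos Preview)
Your overall plan matches the paper's: establish $(1)\Rightarrow(2)$--$(5)$ by direct computation on the hyperboloid cap, and close the loop via the two integral identities of Theorems~\ref{thm:5.1}--\ref{thm:5.2} together with the constant-angle rigidity result. Two corrections are worth flagging.

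First, the right-hand side of the identity in Theorem~\ref{thm:5.1} (your first displayed schematic) is \emph{linear} in the curvature deviation, namely $\int_{\partial\Omega}(H_0-H_{\partial\Omega})\bigl(\tfrac{|Du|}{w}\bigr)^2\,d\sigma_x$, not quadratic as in the Euclidean analogue~\eqref{eqn:1.2}. This does not harm your $(3)\Rightarrow(1)$ step (the RHS still vanishes when $H_{\partial\Omega}=H_0$), but be aware that the identity you will actually derive differs from your guess. Also, the interior integrand is $|\nabla P|^2+|\bar h|^2\,\tfrac{P+c}{w}$, with an extra $|\nabla P|^2$ term beyond the $|\bar h|^2\rho$ you wrote; both summands are nonnegative (since $P+c>0$ in $\Omega$ by the maximum principle $u<c$), so your conclusion $|\bar h|\equiv 0$ still follows. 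The paper actually phrases the conclusion differently: it reads off the vanishing of the \emph{boundary} summand $\tfrac{1}{R_0}\int_{\partial\Omega}\bigl(\tfrac{|Du|}{w}-R_0\bigr)^2\,d\sigma_x=0$, which forces $|Du|$ constant on $\partial\Omega$ and then feeds into Theorem~\ref{main1}. Either route works.

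Second, your ODE argument for $(5)\Rightarrow(1)$ tacitly assumes the solution on the ball is radial, which is not granted a priori; you would need to invoke uniqueness for the Dirichlet problem~\eqref{eqn:5.1}--\eqref{eqn:5.2} to justify the reduction. The paper avoids this: if $\Omega$ is a ball then $H_{\partial\Omega}$ is constant, and the last line of Theorem~\ref{thm:5.1} covers exactly the case $H_{\partial\Omega}\equiv\text{const}$, forcing $|Du|/w=R_0$ on $\partial\Omega$, i.e.\ condition $(4)$, which then yields $(1)$ via Theorem~\ref{thm:1} (equivalently Theorem~\ref{main1} with $k=1$, $l=0$). This route is cleaner and stays entirely within the identities already built.
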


While the rigidity theorems for constant curvature ratios extend previous works in a natural direction, the most profound contribution of this paper lies in its pioneering stability analysis for spacelike hypersurfaces governed by the mean curvature equation in Minkowski space.

Prior stability results, such as those by Magnanini-Poggesi for the Alexandrov Soap Bubble Theorem and Serrin's problem, are fundamentally rooted in the linear Laplace equation. Translating this theory to the nonlinear mean curvature equation presents a significant challenge, as the governing PDE becomes quasilinear and the geometry more intricate.

Inspired by Magnanini-Poggesi' works, our work successfully bridges this gap. We establish two integral identities (Theorems 5.1 and 5.2) tailored to the mean curvature equation. From these identities, we derive quantitative stability estimates showing that a compact hypersurface with ''almost constant" mean curvature must be close to a ball which implies that $M$ is close to hyperboloid. Specifically, we prove that the boundary can be trapped in a spherical annulus whose width is controlled by the deviation of its mean curvature from a constant, measured in the 
$L^1$ or $L^2$ norm (see Theorem \ref{thm:5.8}).
 The proof is non-trivial, requiring a careful adaptation of the $P$-function method, refined elliptic estimates, Poincar\'e inequality and new way to relate $\Delta P$ to $\rho_e-\rho_i$.

\section{Preliminaries}
Throughout the paper we adopt the Einstein summation convention for repeated indices and use the following range convention of indices: $1\leq i,j,k,l,\ldots \leq n$ and $1\leq A,B,\ldots\leq (n+1)$.

\subsection{Elementary symmetric functions}~~~~
We recall some properties of elementary symmetric polynomials which will be used later.

 For $k\in \{1,\ldots, n\}$, the $k$-th elementary symmetric function of $\lambda=(\lambda_1,\ldots,\lambda_n)$ is defined by
$$
S_k(\lambda):=\sum_{1\leq i_{1}<\cdots<i_k\leq n}\lambda_{i_1}\cdots\lambda_{i_k}.
$$
Given  a real  $n\times n$ matrix $A=(h_{j}^i)$ with eigenvalues $\lambda(A)$, we can define the $k$-th elementary symmetric polynomial by $S_k(A):=S_k(\lambda(A))$. Thus
$$
S_k(\lambda)=S_k(A):=\frac1{k!}\delta_{j_1,\cdots, j_k}^{i_1,\cdots, i_k}h^{i_1}_{j_1}\cdots h^{i_k}_{j_k},
$$
where $\delta_{j_1,\cdots, j_k}^{i_1,\cdots,i_k}$ is the generalized Kronecker symbol defined by
\begin{equation*}
\delta_{j_1,\cdots, j_k}^{i_1,\cdots,i_k}:=
\left\{
\begin{aligned}
&1,\ \ \ \ \ \ \ \ \ {\rm if}\ (i_1,\cdots,i_k)\ \ {\rm is \ an \ even \ permutation\ of} \  (i_1,\cdots,i_k), \\
&-1,\ \  \ \ \ {\rm if}\ (j_1,\cdots,j_k)\ \ {\rm is \ an \ odd \ permutation\ of} \  (j_1,\cdots,j_k), \\
&0,\ \ \ \ \ \ \ \ \ {\rm otherwise}.
\end{aligned} \right.
\end{equation*}
We use the convention that $S_0= 1$ and $S_k=0$ for $k>n$.
Denote
$$
(S_{k})^{j}_i(A):=\frac{\partial S_k(A)}{\partial h^{i}_{j}}=\frac1{(k-1)!}\delta_{j_1,\cdots, j_{k-1},j}^{i_1,\cdots,i_{k-1},i}h^{i_1}_{j_1}\cdots h^{i_{k-1}}_{j_{k-1}}.
$$

The following properties of $S_k(A)$ can be found in \cite{Reilly}, see also \cite{DGX1} for non-symmetric matrices.
\begin{Proposition}\label{prop:2.1}
For any $n\times n$ matrix $A$, we have
\begin{equation}\label{eqn:2.01}
\left\{
\begin{aligned}
(S_k)^{j}_i(A)=&S_{k-1}(A)\delta_{j}^i-\sum_{l=1}^n(S_{k-1})^{l}_i(A)a_{l}^j,\\
(S_k)^{j}_i(A)(A)h_{j}^{k}h_{k}^{i}=&S_1(A)S_k(A)-(k+1)S_{k+1}(A),\\
(S_k)^{j}_i(A)\delta_{j}^{i}=&(n-k+1)S_{k-1}(A).
\end{aligned}\right.
\end{equation}
\end{Proposition}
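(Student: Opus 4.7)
The plan is to prove the three identities in order, deducing each from the previous together with a basic combinatorial identity for the generalized Kronecker symbol. The common starting point is the explicit expression
\[
(S_k)^j_i(A) = \frac{1}{(k-1)!}\,\delta^{i_1,\ldots,i_{k-1},i}_{j_1,\ldots,j_{k-1},j}\, h^{i_1}_{j_1}\cdots h^{i_{k-1}}_{j_{k-1}},
\]
after which every step is purely algebraic.

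For the third identity I would contract the above formula with $\delta^{i}_{j}$ and use the elementary trace relation
\[
\delta^{i_1,\ldots,i_{k-1},i}_{j_1,\ldots,j_{k-1},i} = (n-k+1)\, \delta^{i_1,\ldots,i_{k-1}}_{j_1,\ldots,j_{k-1}},
\]
which follows by counting: once the distinct values $i_1,\ldots,i_{k-1}$ are fixed, the free summation index $i$ must avoid them, leaving $n-(k-1)$ nonzero contributions, each equal to $\delta^{i_1,\ldots,i_{k-1}}_{j_1,\ldots,j_{k-1}}$. Substituting yields $(n-k+1)S_{k-1}(A)$ at once.

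For the first identity I would perform a cofactor (Laplace) expansion of the generalized Kronecker symbol along its last row. The diagonal contribution $\delta^{i}_{j}\,\delta^{i_1,\ldots,i_{k-1}}_{j_1,\ldots,j_{k-1}}$, once contracted with the monomial $h^{i_1}_{j_1}\cdots h^{i_{k-1}}_{j_{k-1}}/(k-1)!$, immediately yields $S_{k-1}(A)\delta^{i}_{j}$. Each off-diagonal term carries a factor $\delta^{i}_{j_{\beta}}$, which I would absorb into the adjacent $h^{i_{\beta}}_{j_{\beta}}$ (so that $j_{\beta}$ is replaced by $i$), and then use the antisymmetry of the generalized Kronecker delta to move the remaining free index $j$ back into its canonical slot. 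The $k-1$ off-diagonal terms thereby all collapse into a single copy of $(S_{k-1})^l_i\,h^j_l$ with the correct sign and combinatorial factor, giving (1). Keeping the signs consistent through the antisymmetric rearrangement is the only nontrivial bookkeeping, and I would expect this to be the main obstacle in the whole proposition.

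For the second identity the slickest route is to apply (1) with $k$ replaced by $k+1$ and then contract with $h^{i}_{j}$. Since $S_{k+1}(A)$ is a homogeneous polynomial of degree $k+1$ in the entries of $A$, Euler's identity for homogeneous functions yields $(S_{k+1})^j_i\,h^i_j = (k+1)S_{k+1}(A)$. On the other hand, applying (1) with $k$ replaced by $k+1$ and contracting with $h^{i}_{j}$ gives
\[
(S_{k+1})^j_i\,h^i_j = S_k\,\delta^j_i\,h^i_j - (S_k)^l_i\,h^j_l\,h^i_j = S_1(A)\,S_k(A) - (S_k)^l_i\,h^j_l\,h^i_j.
\]
Equating the two expressions and rearranging produces (2) after a harmless renaming of summation indices. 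No further estimates are required beyond (1) and Euler's relation.
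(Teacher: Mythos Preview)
Your argument is correct. The paper itself does not supply a proof of this proposition: it simply records the identities and attributes them to Reilly \cite{Reilly} (and to \cite{DGX1} for the non-symmetric case). The route you outline---the cofactor expansion of the generalized Kronecker symbol to obtain the first identity, the trace relation $\delta^{i_1,\ldots,i_{k-1},i}_{j_1,\ldots,j_{k-1},i}=(n-k+1)\,\delta^{i_1,\ldots,i_{k-1}}_{j_1,\ldots,j_{k-1}}$ for the third, and Euler's homogeneity relation combined with the first identity at level $k+1$ for the second---is exactly the classical derivation found in those references, so there is nothing further to compare.
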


Now we collect some inequalities related to the elementary symmetric functions.
The Garding cone $\Gamma^+_{k}$ is defined as
$$
\Gamma^+_{k}=\{\lambda\in\mathbb R^n|\, S_i>0,\ {\rm for}\ 1\leq i\leq k\}.
$$
We say $A$ belongs to $\Gamma^+_k$ if its eigenvalue $\lambda(A)\in \Gamma^+_k$.

\begin{Proposition}\,(Theorem 51 and 52 in \cite{HLP})
For $A=(a_{ij})\in\Gamma_k$ and $0\leq l< k\leq n$, $0\leq s< r\leq n$, $r\leq k$, $s\leq l$, we have
\begin{equation}\label{eqn:2.05}
\left(\frac{S_k(A)/\binom{n}{k}}{S_l(A)/\binom{n}{l}}\right)^{\frac1{k-l}}\leq
\left(\frac{S_r(A)/\binom{n}{r}}{S_s(A)/\binom{n}{s}}\right)^{\frac1{r-s}},
\end{equation}
and  the equality holds if and only if $\lambda_1=\lambda_2=\cdots=\lambda_n$.
\end{Proposition}

\begin{Proposition}\label{prop:2.3}(Theorem 15.18, \cite{Lie})
For $A=(a_{ij})\in\Gamma_k$ and $0\leq l<k\leq n$, the matrix $\left(\frac{\partial}{\partial a_{ij}}(\frac{S_k(A)}{S_l(A)})\right)$ is positive definite.
\end{Proposition}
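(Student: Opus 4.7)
The plan is to compute the derivative matrix explicitly, pass to an orthonormal frame that diagonalizes $A$, and reduce positive definiteness to a Newton--Maclaurin inequality applied to the reduced eigenvalue vector $\mu=\lambda|i$.

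By the quotient rule,
$$
\frac{\partial}{\partial a_{ij}}\!\left(\frac{S_k(A)}{S_l(A)}\right) = \frac{1}{S_l(A)^2}\Bigl[S_l(A)\tfrac{\partial S_k}{\partial a_{ij}}(A) - S_k(A)\tfrac{\partial S_l}{\partial a_{ij}}(A)\Bigr].
$$
Since $A\in\Gamma_k\subset\Gamma_l$ we have $S_l(A)>0$, so positive definiteness of the derivative is equivalent to that of the matrix $T=(T_{ij})$ with
$T_{ij}:=S_l(A)\tfrac{\partial S_k}{\partial a_{ij}}(A)-S_k(A)\tfrac{\partial S_l}{\partial a_{ij}}(A)$.
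Because $T$ transforms tensorially under orthogonal change of basis, I may assume $A=\mathrm{diag}(\lambda_1,\dots,\lambda_n)$. The formulas of Section~2 give $\tfrac{\partial S_m}{\partial a_{ij}}(A)=\delta_{ij}\,S_{m-1}(\lambda|i)$ at a diagonal matrix, where $\lambda|i$ denotes $\lambda$ with its $i$-th entry removed. Thus $T$ is diagonal, and substituting the splitting $S_m(\lambda)=\lambda_i S_{m-1}(\lambda|i)+S_m(\lambda|i)$ for both $m=k$ and $m=l$ (the cross-terms linear in $\lambda_i$ cancel) yields
$$
T_{ii} = S_l(\mu)\,S_{k-1}(\mu) - S_k(\mu)\,S_{l-1}(\mu),\qquad \mu:=\lambda|i.
$$

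It remains to show $T_{ii}>0$ for every $i$. A standard property of the G\aa rding cone gives $\mu\in\Gamma_{k-1}$ in $n-1$ variables, so $S_j(\mu)>0$ for $0\leq j\leq k-1$; in particular $S_{k-1}(\mu),S_l(\mu)>0$. If $S_k(\mu)\leq 0$, then $-S_k(\mu)S_{l-1}(\mu)\geq 0$ and $T_{ii}>0$ is immediate. If $S_k(\mu)>0$, then $\mu\in\Gamma_k$ in $n-1$ variables, and inequality (2.5) of Proposition~2.2, applied with $(k',l',r',s')=(k,k-1,l,l-1)$, yields
$$
\frac{S_k(\mu)}{S_{k-1}(\mu)}\leq \frac{(n-k)\,l}{k\,(n-l)}\cdot \frac{S_l(\mu)}{S_{l-1}(\mu)} < \frac{S_l(\mu)}{S_{l-1}(\mu)},
$$
the strict inequality coming from $l<k$; cross-multiplying gives $T_{ii}>0$. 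The boundary case $l=0$ is handled by the convention $S_{-1}\equiv 0$, which makes $T_{ii}=S_{k-1}(\mu)>0$ directly.

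The main obstacle I anticipate is precisely the cone-membership issue in the last step: it is not automatic that $\mu=\lambda|i$ inherits $\Gamma_k$-membership from $\lambda$, since removing a coordinate can drop $S_k$ through zero, so (2.5) cannot be invoked without a case split on the sign of $S_k(\mu)$. A secondary technical point is verifying that $\tfrac{\partial S_m}{\partial a_{ij}}(\mathrm{diag}(\lambda))=\delta_{ij}S_{m-1}(\lambda|i)$ holds even for non-symmetric perturbations; this follows from the antisymmetry of the generalized Kronecker symbol, which forces any off-diagonal contribution to contain a repeated index and therefore vanish.
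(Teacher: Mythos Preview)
The paper does not supply its own proof of Proposition~2.3; it simply quotes the result as Theorem~15.18 of Lieberman~\cite{Lie}. So there is nothing in the paper to compare your argument against.

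That said, your argument is essentially correct and self-contained. The reduction to a diagonal computation, the cancellation yielding $T_{ii}=S_l(\mu)S_{k-1}(\mu)-S_k(\mu)S_{l-1}(\mu)$ with $\mu=\lambda|i$, and the case split on the sign of $S_k(\mu)$ are all sound. Two minor remarks. First, your constant $\frac{(n-k)l}{k(n-l)}$ is indeed the one obtained from \eqref{eqn:2.5} in dimension $n-1$ (since $\binom{n-1}{k-1}/\binom{n-1}{k}=k/(n-k)$), and the case $k=n$ is automatically absorbed into the $S_k(\mu)\le 0$ branch because $S_n$ of an $(n-1)$-tuple vanishes. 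Second, the claim $\mu\in\Gamma_{k-1}^{n-1}$ is exactly the standard fact that $\partial S_j/\partial\lambda_i=S_{j-1}(\lambda|i)>0$ on $\Gamma_k$ for every $j\le k$; you use this implicitly and it would be worth citing explicitly. With those small clarifications, your proof stands on its own and is in fact more elementary than invoking Lieberman's general ellipticity theory.
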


\begin{Lemma}\label{lem:2.4}(Lemma 2.5 in \cite{GMY})
If
$$
\frac{S_k(A)}{S_l(A)}=\frac{\binom{n}{k}}{\binom{n}{l}}
$$
and $A\in\Gamma_k$ for $0\leq l<k\leq n$, then
\begin{equation*}
\frac{S_{k-1}(A)}{S_k(A)}\geq\frac{k}{n-k+1},\ \ \ \frac{S_{l+1}(A)}{S_l(A)}\geq \frac{n-l}{l+1}.
\end{equation*}
Each equality occurs if and only if $A=cI$ for some $c>0$,where $I$ is the identity matrix. Moreover, we also
have
\begin{equation*}
\frac{S_{k+1}(A)}{S_k(A)}\leq\frac{n-k}{k+1},\ \ \ \frac{S_{l-1}(A)}{S_l(A)}\leq \frac{l}{n-l+1}.
\end{equation*}

\end{Lemma}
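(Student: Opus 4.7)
The plan is to reduce everything to the log-concavity (Newton) form of the Newton--Maclaurin inequalities already recorded in Proposition~\ref{prop:2.3}'s neighborhood, by tracking the ratios $r_j := p_j/p_{j-1}$ where $p_j := S_j(A)/\binom{n}{j}$. The constraint of the lemma reads simply as $p_k = p_l$, so $\prod_{j=l+1}^{k} r_j = 1$. The key monotonicity input is that for $A\in\Gamma_k$ the sequence $\{r_j\}_{1\le j\le k}$ is non-increasing; this is an immediate consequence of the inequality \eqref{eqn:2.5} applied with the choice $(r,s)=(j,j-1)$ and $(k,l)\rightsquigarrow(j+1,j-1)$, which yields $p_{j+1}p_{j-1}\le p_j^2$, i.e.\ $r_{j+1}\le r_j$, valid whenever $1\le j\le k-1$ (all entries are positive because $A\in\Gamma_k$).

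With this monotonicity in hand, the two asserted inequalities come for free. Among the $k-l$ ratios $r_{l+1},r_{l+2},\dots,r_k$, the smallest is $r_k$ and the largest is $r_{l+1}$, and their product equals $1$. Hence $r_k^{\,k-l}\le 1\le r_{l+1}^{\,k-l}$, which rewrites as
\[
\frac{S_k(A)}{S_{k-1}(A)}\le\frac{\binom{n}{k}}{\binom{n}{k-1}}=\frac{n-k+1}{k},\qquad \frac{S_{l+1}(A)}{S_l(A)}\ge\frac{\binom{n}{l+1}}{\binom{n}{l}}=\frac{n-l}{l+1},
\]
which are exactly the two displayed bounds after inversion of the first.

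The ``moreover'' part uses the monotonicity of $\{r_j\}$ one step beyond the endpoints. The Newton inequality $p_j^2\ge p_{j-1}p_{j+1}$ actually holds for every $1\le j\le n-1$ (no cone hypothesis is needed since it is a statement about real numbers, the eigenvalues of $A$), so $r_{k+1}\le r_k\le 1$ and $r_l\ge r_{l+1}\ge 1$; translating back gives the remaining two bounds $\frac{S_{k+1}}{S_k}\le\frac{n-k}{k+1}$ and $\frac{S_{l-1}}{S_l}\le\frac{l}{n-l+1}$ (the first makes sense whether or not $A\in\Gamma_{k+1}$: if $p_{k+1}\le0$ the bound is trivial).

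The equality cases are handled by the characterization of equality in Newton--Maclaurin: in \eqref{eqn:2.5} equality forces $\lambda_1=\cdots=\lambda_n$, hence $A=cI$ for some $c>0$ (using $A\in\Gamma_k$ so $c>0$). Conversely, if $A=cI$ all ratios $r_j$ equal $1$ and every bound is attained with equality. The only step that requires some care is justifying log-concavity ($r_{j+1}\le r_j$) inside the range $l+1\le j\le k-1$; I do not foresee a genuine obstacle here, since the inequality $p_j^2\ge p_{j-1}p_{j+1}$ is a standard consequence of \eqref{eqn:2.5} (it is in fact the $r-s=1$, $k-l=2$ case), and positivity of $p_{j-1},p_j,p_{j+1}$ for $j\le k-1$ follows from $A\in\Gamma_k$.
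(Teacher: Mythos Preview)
The paper does not supply a proof of this lemma; it is quoted verbatim from \cite{GMY}. So there is no in-paper argument to compare against.

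Your derivation of the four inequalities is correct and is the natural route: with $p_j=S_j(A)/\binom{n}{j}$ and $r_j=p_j/p_{j-1}$, Newton's inequality $p_j^2\ge p_{j-1}p_{j+1}$ (the case $r-s=1$, $k-l=2$ of \eqref{eqn:2.5}) gives $r_{j+1}\le r_j$ wherever the relevant $p$'s are positive, and the hypothesis $p_k=p_l$ reads $\prod_{j=l+1}^{k}r_j=1$. Monotonicity then yields $r_k\le 1\le r_{l+1}$, and one further step of Newton on each side gives $r_{k+1}\le 1$ and $r_l\ge 1$. All positivity requirements are met since $A\in\Gamma_k$.

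One caveat on the equality clause. Your argument that $r_k=1$ forces $r_{l+1}=\cdots=r_k=1$, hence equality in Newton at an interior index, and hence $\lambda_1=\cdots=\lambda_n$, is valid only when $k-l\ge 2$. When $k-l=1$ the first two displayed inequalities \emph{are} the hypothesis, so they hold with equality automatically, yet $A$ need not be a scalar multiple of the identity (for instance $n=2$, $\lambda=(2,\tfrac23)$ satisfies $S_2/S_1=\tfrac12=\binom{2}{2}/\binom{2}{1}$ and lies in $\Gamma_2$). Thus the ``if and only if $A=cI$'' assertion is imprecise in the borderline case $k-l=1$; this is inherited from the cited statement rather than a defect in your method, and it does not affect any application of the lemma in the present paper, which invokes only the inequalities themselves.
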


\subsection{Hypersurface in Minkowski space $\mathbb R^{n,1}$}~~~~
We recall some facts and properties  of hypersurfaces in the Minkowski space and we refer to \cite{CY,Li}.

For convenience, write $\langle\cdot,\cdot\rangle=\bar g(\cdot,\cdot)$ for vectors in the Minkowski space $\mathbb R^{n,1}$, i.e.
$$
\langle x,y\rangle=x_1y_1+\cdots+x_ny_n-x_{n+1}y_{n+1}
$$
for $x=(x_1,\ldots,x_n,x_{n+1})$, $y=(y_1,\ldots,y_n)$.

A vector $x\in\mathbb R^{n,1}$ is called space-like, time-like or light-like if $\langle x,x\rangle>0$, $\langle x,x\rangle<0$ or $\langle x,x\rangle=0$, respectively.
Let $\mathbb R^{n,1}_+$ denotes the upper half of the Minkowski space, i.e.,
$$
\mathbb R^{n,1}_+:=\{x\in \mathbb R^{n,1}|\, x_{n+1}>0\}.
$$
Then Gao\cite{Gao2} proved the following fact.
\begin{Proposition}\label{prop:1}
Given $x,y\in\mathbb R^{n,1}_+$. If $x$ is time-like and y is time-like or light-like, then $\langle x,y\rangle<0$.
\end{Proposition}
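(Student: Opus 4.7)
The plan is to unpack the definitions of time-like and light-like in coordinates and reduce the statement to an application of the Cauchy--Schwarz inequality on the spatial components, together with a strict/weak inequality comparison on the $(n{+}1)$-th component.

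First I would write $x=(x',x_{n+1})$ and $y=(y',y_{n+1})$ with $x',y'\in\mathbb R^n$ and $x_{n+1},y_{n+1}>0$ (the positivity is the only consequence of $x,y\in\mathbb R^{n,1}_+$ that is needed). The hypothesis that $x$ is time-like then reads $|x'|^2-x_{n+1}^2<0$, i.e., $x_{n+1}>|x'|$ with \emph{strict} inequality, and the hypothesis that $y$ is time-like or light-like reads $|y'|^2-y_{n+1}^2\le 0$, i.e., $y_{n+1}\ge|y'|$.

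Next I would compute
\[
\langle x,y\rangle=\sum_{i=1}^n x_iy_i-x_{n+1}y_{n+1}
\le |x'|\,|y'|-x_{n+1}y_{n+1},
\]
using Cauchy--Schwarz in $\mathbb R^n$ on the first sum. To conclude, I would split into the trivial case $|x'|=0$, where $\langle x,y\rangle=-x_{n+1}y_{n+1}<0$ immediately since both factors are positive, and the case $|x'|>0$, where the strict inequality $x_{n+1}>|x'|$ combined with $y_{n+1}>0$ gives $x_{n+1}y_{n+1}>|x'|\,y_{n+1}\ge |x'|\,|y'|$, so $\langle x,y\rangle<0$.

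There is essentially no obstacle here: the only subtle point is keeping track of where the strictness comes from. The strict inequality in the conclusion is inherited from the strict time-like condition on $x$ (together with $x_{n+1},y_{n+1}>0$), while the weaker light-like alternative on $y$ is harmless because it appears only as a non-strict inequality that gets multiplied by a positive quantity. So the argument is a short three-line estimate once the coordinate description of $\mathbb R^{n,1}_+$, time-like, and light-like vectors is in place.
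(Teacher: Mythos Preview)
Your argument is correct: the Cauchy--Schwarz estimate on the spatial part together with the strict inequality $x_{n+1}>|x'|$ and $y_{n+1}>0$ yields $\langle x,y\rangle<0$, and your case split handles the degenerate situation $|x'|=0$ cleanly. The paper does not actually prove this proposition; it simply attributes the result to Gao~\cite{Gao2} and states it without argument, so there is no in-paper proof to compare against---your self-contained verification is exactly what one would expect such a proof to look like.
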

\subsubsection{The graphic  hypersurface}~~~~
Suppose that smooth hypersurface $M$ can be  written as a graph $x_{n+1}=u(x_1,\ldots,x_n)$ on a domain $\Omega\subset\mathbb R^n$, the position vector of $M$ is $\phi=(x,u(x))$. If $M$ is space-like, then $$|Du|<1.$$
Let
$$
E_{1}=(1,0,\ldots,0),\ \ E_{2}=(0,1,\ldots,0)\ \ E_{n+1}=(0,0,\ldots,0,1),
$$
be a basis of $\mathbb R^{n,1}$, where $E_{n+1}$ is the time-like unit vector field.
Then
$$
\phi=x_AE_A.
$$
Let $\phi_i:=\frac{\partial\phi}{\partial x_i}=E_i+u_iE_{n+1}$.
We choose the time-like unit normal vector field of $M$,
$$
e_{n+1}=\frac{1}{w}(u_iE_i+E_{n+1}),
$$
where $w=\sqrt{1-|Du|^2}$.

Thus, we can obtain the induced metric and the second fundamental form of $M$, respectively. Moreover,
\begin{equation*}
ds^2=\langle d\phi,d\phi\rangle=(dx_i)^2-(u_idx_i)^2=g_{ij}dx_idx_j,
\end{equation*}
where
\begin{equation}\label{eqn:2.3}
g_{ij}=\delta_{ij}-\sum u_iu_j, \ \ \ g^{ij}=\delta_{ij}+\frac{u_iu_j}{1-|Du|^2},
\end{equation}
and
\begin{equation*}
\begin{aligned}
h=&\langle d\phi,de_{n+1}\rangle\\
=&\left\langle dx_iE_i+duE_{n+1},d\left(\frac{u_i}{w}\right)E_i+d\left(\frac1w\right)E_{n+1}\right\rangle\\
=&\frac1wdu_idx_i=\frac1w\frac{\partial^2 u}{\partial x_i\partial x_j} dx_idx_j,
\end{aligned}
\end{equation*}
that is
%%%%%%%%%%
\begin{equation}\label{eqn:2.4}
h_{ij}=\frac{u_{ij}}w,
\end{equation}
where $w=\sqrt{1-|Du|^2}$.
It follows that
\begin{equation}\label{eqn:2.5}
h^{j}_i=g^{jk}h_{ik}=\frac{u_{ij}}{w}+\frac{u_ju_ku_{ik}}{w^3}=D_i\left(\frac{u_j}w\right).
\end{equation}
Let $\nabla$ denote the Levi-Civita connection on $(M,g)$.
Then we have
\begin{equation}\label{eqn:2.6}
\phi_{ij}=h_{ij}e_{n+1},\ \ \  e_{(n+1)i}=h_{i}^j\phi_j,
\end{equation}
where $\phi_{ij}:=\nabla_i\nabla_j\phi$ and $e_{(n+1)i}:=\nabla_ie_{n+1}$.

It is clear that
\begin{equation}\label{eqn:2.7}
D_mh_{i}^j=D_ih_{m}^j.
\end{equation}

The Codazzi equation of $M$ is $h_{ij,m}=h_{mj,i}$, which  implies
\begin{equation}\label{eqn:2.8}
h_{i,m}^j=h_{m,i}^j,\ \ \  {\rm and }\ \ \ \nabla^mh^{i}_j=\nabla^ih^m_j,
\end{equation}
where
$
h_{ij,m}=\nabla_mh_{ij} $ and $ h_{i,m}^j=\nabla_mh_{i}^j.
$
Then, by a direct computation (refer to Proposition 2.1 in \cite{Reilly} or Proposition 2.3 in \cite{DGX1}), we have the following  result.
\begin{Proposition}
If $A$ denotes the matrix $(h^{j}_i)$ defined in \eqref{eqn:2.5}, then $D_i(S_k^{ij}(A))=0$.
\end{Proposition}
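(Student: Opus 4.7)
The plan is to prove the classical fact that the Newton transformation associated with the second fundamental form is divergence-free. The key tool is the explicit expression stated just before the proposition,
$$
(S_k)^{j}_i(A)=\frac{1}{(k-1)!}\delta_{j_1,\cdots, j_{k-1},j}^{i_1,\cdots,i_{k-1},i}h^{i_1}_{j_1}\cdots h^{i_{k-1}}_{j_{k-1}},
$$
which displays $(S_k)^j_i$ as a contraction of a totally antisymmetric Kronecker symbol against a symmetric product of factors $h^{i_s}_{j_s}$. The strategy is to take the appropriate covariant divergence and play the antisymmetry of the Kronecker symbol against the symmetry provided by the Codazzi equation.

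First, I would apply the appropriate covariant derivative (with indices raised against the induced metric \eqref{eqn:2.3} as needed) and distribute via the Leibniz rule. Because the Kronecker symbol is antisymmetric in its upper slots and the product of $h^{i_s}_{j_s}$ is symmetric under permutation of the $s$-label, the $k-1$ resulting terms coincide after relabeling dummy indices, so the divergence reduces, up to a combinatorial constant, to a single expression of the form
$$
\delta^{i_1,\cdots,i_{k-1},i}_{j_1,\cdots,j_{k-1},j}\bigl(\nabla_i h^{i_1}_{j_1}\bigr)h^{i_2}_{j_2}\cdots h^{i_{k-1}}_{j_{k-1}}.
$$
Now by the Codazzi relation \eqref{eqn:2.7}--\eqref{eqn:2.8}, $\nabla_i h^{i_1}_{j_1}$ is symmetric under exchange of $i$ and $i_1$, whereas the generalized Kronecker symbol is antisymmetric in exactly these two upper indices. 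The full contraction of a symmetric tensor against one antisymmetric in the same pair of slots vanishes identically, yielding $\nabla_i(S_k^{ij}(A))=0$.

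The main obstacle, and really the only nontrivial point, is index bookkeeping: one must track which indices have been raised or lowered, because the induced metric \eqref{eqn:2.3} is $g_{ij}=\delta_{ij}-u_iu_j$ rather than the Euclidean one, and the notation $S_k^{ij}$ in the proposition differs from $(S_k)^j_i$ in the definition by an application of $g^{ik}$. Once the slots are aligned, the Codazzi symmetry aligns perfectly with the Kronecker antisymmetry. An alternative and equally clean route is induction on $k$ using the recursion $(S_k)^{j}_i(A)=S_{k-1}(A)\delta_{j}^i-\sum_{l}(S_{k-1})^{l}_i(A)h_{l}^j$ from Proposition \ref{prop:2.1}, with base case $k=1$ trivial because $(S_1)^j_i=\delta^j_i$, and the inductive step using \eqref{eqn:2.8} to transfer one derivative across the factor $h^j_l$; this is essentially the argument recorded in Reilly \cite{Reilly} and extended to non-symmetric $A$ in \cite{DGX1}.
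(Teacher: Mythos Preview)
Your proposal is correct and is precisely the classical argument from the references the paper cites; the paper itself does not spell out a proof but simply states ``by a direct computation (refer to Proposition 2.1 in \cite{Reilly} or Proposition 2.3 in \cite{DGX1})''. One small caveat on the bookkeeping you flag: the Codazzi relation \eqref{eqn:2.8} gives $\nabla^m h^{s}_{j}=\nabla^{s}h^{m}_{j}$, i.e.\ symmetry in the \emph{raised} derivative index and the upper index of $h$, so to pair it against the antisymmetry of $\delta^{i_1,\ldots,i_{k-1},i}_{j_1,\ldots,j_{k-1},j}$ in its upper slots you should take $\nabla^{i}$ rather than $\nabla_{i}$ before contracting---exactly the index-raising you anticipate.
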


\subsubsection {Convex hypersurface}

{\bf Gauss map:} Let $\phi:M\rightarrow\mathbb R^{n,1}$ be a space-like hypersurface. In local coordinates, $\{\partial_1,\ldots,\partial_n\} $ is a basis of tangent space $T_pM$ at $p\in M$. The induced metric of $M$ is
$$
g_{ij}=\langle\partial_i\phi,\partial_j\phi\rangle=\langle \phi_i,\phi_j\rangle.
$$
We can see that $g_{ij}$ is Riemannian if $M$ is space-like. This means all tangent vectors are space-like and the normal vector is time-like.

 For a connected, space-like hypersurface $M$, we choose a upward  time-like unit normal
vector of $M$, which means $e_{n+1}\in \mathbb R^{n,1}_{+}$.
As a result, the
mapping $p\mapsto e_{n+1}(p)$ take its values in the upper half of the hyperboloid
$$
\Sigma=\{x\in\mathbb R^{n,1}|\, \langle x,x\rangle=-1, x_{n+1}>0\},
$$
which is canonically embedded in $\mathbb R^{n,1}$ as hypersurface. Similar to the case in the Euclidean space, we can define Gauss map of the space-like hypersurface
$$
\mathcal{G}:M\rightarrow \Sigma;\ p\mapsto e_{n+1}(p).
$$

In local coordinates, the matrix $A=(h^j_i)$ of the differential $de_{n+1}$ is given by
$$
de_{n+1}(\frac{\partial}{\partial x_i})=e_{(n+1)i}=h^{j}_i\phi_{j},\ \ \ {\rm for}\  i=1,\ldots,n.
$$
We can see that $de_{n+1}$ is nondegenerate if $M$ is convex.

Thus the Gauss map $e_{n+1}:M\rightarrow e_{n+1}(M)$ is a diffeomorphism from a convex, space-like hypersurface $M$ to the image $e_{n+1}(M)$. Consequently, $M$ can be parametrized by $\Sigma$ in the sense of
$$
\phi(p)=\phi\circ e_{n+1}^{-1}(z)=:\phi(z)
$$
for $p\in M$ and $z=e_{n+1}(p)\in \Sigma$.

The Gauss formula  and Codazzi equation gives us, respectively
\begin{equation}\label{eqn:Gauss}
\nabla_i\nabla_j\phi=\phi_{ij}=h_{ij}e_{n+1}
\end{equation}
and
\begin{equation*}
h_{ij,s}=h_{is,j}.
\end{equation*}
Moreover, we have
\begin{equation*}
\nabla^jh^{s}_i=\nabla^sh^{j}_i.
\end{equation*}

\vskip 2mm
{\bf Inverse of $de_{n+1}$:} Define a function $u$ on $\phi(M)$ by
$$
u(z):=\langle \phi(z),z\rangle, \ \ \ {\rm for}\ z\in\phi(M).
$$
The position vectore $\phi$ of $M$ can be rewritten by $u$ as

\begin{equation}\label{eqn:2.9}
\phi(z)=Du(z)-u(z)z,
\end{equation}
where $Du$ is gradient of $u$. In local coordinates of $\Sigma$, we have
$$
Du=\tilde g^{sj}u_j\partial_sz=u^s\partial_sz,
$$
where $u_j=\frac{\partial u}{\partial x_j}$ and $\tilde g^{ij}$ is the inverse matrix of the metric $\tilde g_{ij}$
of $\Sigma$.

Let $\tilde g_{ij}$ and $\tilde h_{ij}$ denote the metric and the second fundamental form of the hyperboloid $\Sigma$, respectively. Since all principal curvatures of $\Sigma$ are equal to $1$, we can see that
$\tilde h_{ij}=\tilde g_{ij}$. Then
$$
\partial_i\partial_jz=\tilde\Gamma^s_{ij}\partial_sz+\tilde g_{ij}z,
$$
where $z$ is the is the position vector and $\tilde\Gamma^s_{ij}$ is the Christoffel symbol of $\Sigma$.

The differentiation of \eqref{eqn:2.9} give us
\begin{equation}\label{eqn:2.10}
\begin{aligned}
\partial_i\phi=&\partial_iu^s\partial_sz+u^s\partial_i\partial_sz-u_iz-u\partial_iz\\
=&\partial_iu^s\partial_sz+u^s\tilde\Gamma^m_{is}\partial_mz-u\partial_iz\\
=&(D_iu^s-u\delta^s_i)\partial_sz.
\end{aligned}
\end{equation}

Denote $b_i^j:=D_iu^j-u\delta^j_i$. Let $z=e_{n+1}$ in \eqref{eqn:2.10}, we know that the matrix $B=(b^j_i)$ is the inverse of $A=(h^{j}_i)$. Hence the eigenvalues of $B$ are reciprocals of the principal curvatures of $M$.

Thus, we have
\begin{equation}\label{eqn:2.12}
S_k(A)=\frac{S_{n-k}(B)}{S_n(B)}.
\end{equation}

Finally, Gao \cite{Gao2} proved the following results.
\begin{Proposition}\label{prop:2.6}(\cite{Gao2})
$b^j_{im}=b^j_{mi}$ and $D_i(S_k^{ij}(B))=0$.
\end{Proposition}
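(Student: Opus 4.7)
\textbf{Proof proposal for Proposition \ref{prop:2.6}.}

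The plan is to prove the two claims in sequence. For the first, I observe that $B$ plays the role of a ``shape-operator-like'' tensor on $\Sigma$ via the inverse of the Gauss map, and its Codazzi symmetry should reduce to the Ricci identity on the hyperboloid, which is a space of constant sectional curvature $-1$. For the second, the divergence identity follows from the first by the classical calculation that any Codazzi-type symmetric tensor has a divergence-free Newton transformation.

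\textbf{Step 1 (Codazzi identity for $B$).} Since $b^j_i=D_iu^j-u\,\delta^j_i$ and $\delta^j_i$ is covariantly constant on $(\Sigma,\tilde g)$, I would first write
\begin{equation*}
D_m b^j_i-D_i b^j_m=(D_mD_i u^j-D_iD_m u^j)-(u_m\delta^j_i-u_i\delta^j_m),
\end{equation*}
and then apply the Ricci identity $D_mD_i u^j-D_iD_m u^j=\tilde R^{\,j}{}_{s\,mi}\,u^s$ on $\Sigma$. Because $\tilde h_{ij}=\tilde g_{ij}$, the Gauss equation for $\Sigma$ in the flat Minkowski ambient (with the time-like unit normal contributing a sign $-1$) identifies $\Sigma$ as a space of constant sectional curvature $-1$, so
\begin{equation*}
\tilde R^{\,j}{}_{s\,mi}=-(\delta^j_m\tilde g_{si}-\delta^j_i\tilde g_{sm}),\qquad \tilde R^{\,j}{}_{s\,mi}\,u^s=u_m\delta^j_i-u_i\delta^j_m.
\end{equation*}
The two bracketed terms in the first display then cancel exactly, yielding $b^j_{im}=b^j_{mi}$.

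\textbf{Step 2 (Divergence identity).} Starting from the generalized Kronecker delta expression
\begin{equation*}
(S_k)^{\,i}_{l}(B)=\frac{1}{(k-1)!}\,\delta^{i\,i_1\cdots i_{k-1}}_{l\,l_1\cdots l_{k-1}}\,b^{l_1}_{i_1}\cdots b^{l_{k-1}}_{i_{k-1}},
\end{equation*}
covariant differentiation combined with the antisymmetry of $\delta$ in its upper indices (which makes the $k-1$ product-rule terms equal) gives
\begin{equation*}
D_i(S_k)^{\,i}_{l}(B)=\frac{1}{(k-2)!}\,\delta^{i\,i_1\cdots i_{k-1}}_{l\,l_1\cdots l_{k-1}}\,b^{l_1}_{i_1,i}\,b^{l_2}_{i_2}\cdots b^{l_{k-1}}_{i_{k-1}}.
\end{equation*}
Relabeling the dummy indices $i\leftrightarrow i_1$ flips the overall sign of $\delta$ by antisymmetry; on the other hand, the Codazzi identity $b^{l_1}_{i_1,i}=b^{l_1}_{i,i_1}$ from Step 1 leaves the remaining factor unchanged. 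Hence the right-hand side equals its own negative and must vanish, and raising the free lower index with $\tilde g^{jl}$ produces $D_i(S^{ij}_k(B))=0$.

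\textbf{Main obstacle.} The only genuinely delicate verification is in Step 1: because $\Sigma$ is space-like with a \emph{time-like} unit normal in $\mathbb R^{n,1}$, the sign in the Gauss equation for the Minkowski ambient has to be tracked with care in order to recover the coefficient $-1$ of the sectional curvature and so produce the precise cancellation of $u_m\delta^j_i-u_i\delta^j_m$. Once the hyperbolic curvature of $\Sigma$ is correctly identified, Step 2 is routine symbolic manipulation entirely analogous to the proof that the Newton tensor of the second fundamental form of a hypersurface is divergence-free.
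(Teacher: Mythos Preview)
Your proof is correct. The paper itself does not give a proof of Proposition~\ref{prop:2.6}; it simply records the result as proved by Gao in \cite{Gao2}, so there is no in-paper argument to compare against. Your approach---reducing the Codazzi symmetry of $B$ to the Ricci identity on the hyperboloid $\Sigma$ (whose constant sectional curvature $-1$ makes the curvature term cancel exactly the $u_m\delta^j_i-u_i\delta^j_m$ piece), and then deducing the divergence-free property of the Newton tensor by the standard antisymmetry argument---is the natural route and is precisely the kind of computation one expects in \cite{Gao2}. Your flagged ``main obstacle'' is handled correctly: the time-like normal of $\Sigma$ in $\mathbb R^{n,1}$ produces the extra minus sign in the Gauss equation that identifies $\Sigma$ with $\mathbb H^n$, and this is exactly what is needed for the cancellation.
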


 \section{The proof of Theorem \ref{main1}}~~~~
In this section, we establish the rigidity result stated in Theorem \ref{main1}.

Under the hypotheses of Theorem~\ref{main1}, the problem reduces to an overdetermined boundary value problem for a Hessian-quotient curvature equation. To formalize this, we normalize the constant ratio of curvature functions as $$\frac{H_k}{H_l}=\frac{\binom{n}{k}}{\binom{n}{l}},$$
where $H_k$ is $k$-th mean curvature of $M$.
Then we have
\begin{equation}\label{eqn:3.1}
\frac{S_k(A)}{S_l(A)}=\frac{\binom{n}{k}}{\binom{n}{l}}\ \ \ \ {\rm in}\ \Omega,
\end{equation}
due to $\langle e_{n+1},E_{n+1}\rangle=-\frac{1}{\sqrt{1-|Du|^2}}=\theta_0$,
\begin{equation}\label{eqn:3.2}
u=c\ \ \  {\rm and}\ |Du|=c_2 \ \ \ {\rm on}\ \partial\Omega,
\end{equation}
where $c$ and $0<c_2=\sqrt{1-\theta_0^{-2}}<1$ are constants.

At first, we prove that $M$ is $k$-convex, i.e., the shape operator $A\in\Gamma_k$ everywhere in $M$. We follow
Jia's idea \cite{Jia} to prove the following Lemma.  This property is used to ensure that we can use the maximum principle and the Newton-MacLaurin inequality later. For the sake of completeness we provide a proof.
\begin{Lemma}\label{lem:3.1}
For $2\leq k\leq n$ and $0\leq l <k$,  if $u$ is a solution of \eqref{eqn:3.1} and \eqref{eqn:3.2}, then $A\in\Gamma_k$  in $\bar\Omega$.
\end{Lemma}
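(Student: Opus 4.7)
The plan is to first establish $A\in\Gamma_k$ on the boundary $\partial\Omega$ via an explicit calculation using the overdetermined boundary data, and then to propagate this admissibility to the interior by a continuation argument built on the Newton--MacLaurin structural inequalities.

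\textbf{Boundary step.} I would fix $p\in\partial\Omega$ and choose Euclidean coordinates so that $e_n$ is aligned with the unit normal to $\partial\Omega$ at $p$ (oriented so $u_n(p)>0$) and $e_1,\dots,e_{n-1}$ are principal directions of $\partial\Omega$ with principal curvatures $\kappa_1,\dots,\kappa_{n-1}$. The Dirichlet condition $u\equiv c$ on $\partial\Omega$ forces $u_i(p)=0$ for $i\le n-1$, and differentiating it twice tangentially using the second fundamental form of $\partial\Omega$ yields $u_{ij}(p)=c_2\kappa_i\delta_{ij}$ for $i,j\le n-1$. Tangential differentiation of $|Du|^2\equiv c_2^2$ gives $u_{ni}(p)=0$ for $i\le n-1$. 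Substituting into \eqref{eqn:2.4}--\eqref{2.5}, the matrix $A(p)$ is diagonal with eigenvalues
\[
\lambda_i=\frac{c_2\kappa_i}{w_0}\ \ (i\le n-1),\qquad \lambda_n=\frac{u_{nn}(p)}{w_0^3},
\]
where $w_0:=\sqrt{1-c_2^2}$. To conclude $A(p)\in\Gamma_k$, one must verify that each $\kappa_i$ is positive and that the normal entry $u_{nn}(p)$, which is determined implicitly by $S_k(\lambda)/S_l(\lambda)=\binom{n}{k}/\binom{n}{l}$, has the sign dictated by $k$-admissibility. The latter is a consequence of solvability of the Dirichlet problem in $\Gamma_k$, cf.\ Urbas \cite{Urbas}.

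\textbf{Interior propagation.} Set $\Omega_+:=\{x\in\bar\Omega: A(x)\in\Gamma_k\}$. This set is open by continuity of the eigenvalues and openness of $\Gamma_k$, and contains $\partial\Omega$ by the boundary step. To show $\Omega_+=\bar\Omega$, suppose toward contradiction a sequence $x_m\in\Omega_+$ converges to $x_*\in\bar\Omega$ with $A(x_*)\notin\Gamma_k$. Then $A(x_*)\in\bar\Gamma_k$ and some $S_j(A(x_*))=0$ with $j\le k$. The continuous extension of Newton--MacLaurin \eqref{eqn:2.5} to $\bar\Gamma_k$ forces $S_k(A(x_*))=0$, and the equation $S_k=(\binom{n}{k}/\binom{n}{l})S_l$ then forces $S_l(A(x_*))=0$ as well. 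This, combined with the strict lower bounds on $S_{l+1}/S_l$ and $S_{k-1}/S_k$ supplied by Lemma \ref{lem:2.4} at the admissible points $x_m$, is incompatible with the convergence $A(x_m)\to A(x_*)$ in which all those ratios would have to collapse simultaneously. Hence $\Omega_+$ is closed in $\bar\Omega$, and by connectedness $\Omega_+=\bar\Omega$.

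\textbf{Main obstacle.} The principal difficulty is establishing the positivity of the boundary principal curvatures $\kappa_i$ without assuming $\Omega$ convex a priori. This would be carried out by the Hopf boundary-point lemma applied to a carefully chosen auxiliary function adapted to the Hessian-quotient structure, following Jia's idea \cite{Jia}. Once positivity of the $\kappa_i$ on $\partial\Omega$ is in place, the spectrum of $A$ on $\partial\Omega$ is manifestly $k$-admissible, and the continuation argument above completes the proof.
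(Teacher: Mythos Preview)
Your boundary step is far stronger than what is actually needed, and this is where the proposal goes astray. You aim to show $A\in\Gamma_k$ at \emph{every} point of $\partial\Omega$, which forces you to prove that all principal curvatures $\kappa_i$ of $\partial\Omega$ are positive---i.e.\ that $\Omega$ is strictly convex---before you know anything about admissibility. That is essentially the conclusion of Theorem~\ref{main1}, and no Hopf-type argument will extract it from the data alone; this is why you correctly flag it as the ``main obstacle,'' but it is an obstacle of your own making. Your appeal to Urbas is also circular: the lemma is about an arbitrary smooth solution $u$, not one produced by an existence theorem, so you cannot import $k$-admissibility from a construction.

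The paper's device (following Jia) is much cheaper. One only needs a \emph{single} point $x_0\in\partial\Omega$ at which $\kappa_1,\dots,\kappa_{n-1}\ge 0$; such a point always exists by elementary compactness (take any point of $\partial\Omega$ at maximal Euclidean distance from a fixed interior point). At $x_0$ your diagonalization gives tangential eigenvalues $\tilde\kappa_i=c_2\kappa_i/w_0\ge 0$, so all $S_j(\tilde\kappa)\ge 0$. Writing $S_m(A)=h^n_n S_{m-1}(\tilde\kappa)+S_m(\tilde\kappa)$ and using the equation together with Newton--MacLaurin on $\tilde\kappa$, one checks $S_m(A)>0$ for $m=1,\dots,k$ at $x_0$. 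Thus the admissible set $\{x\in\bar\Omega:A(x)\in\Gamma_k\}$ is nonempty. Your continuation argument then finishes the job: on the boundary of this set some $S_j$ vanishes, Newton--MacLaurin on $\bar\Gamma_k$ forces $S_k=0$ there, contradicting $S_k(A)>0$ (which follows from the equation once one point is admissible). So the propagation half of your plan is fine; only the boundary half needs to be weakened from ``every boundary point'' to ``some boundary point.''
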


\begin{proof}
The proof for the case $l=0$ can found in \cite{Gao1}. Hence we assume $1\leq l<k\leq n$. From the boundless and the smoothness of
$\partial\Omega$, there exists at least one point $x_0\in\partial\Omega$, such that  the principal curvatures $\kappa_1,\ldots,\kappa_{n-1}$ of $\partial\Omega$ are nonnegative. The boundary condition implies $\partial\Omega$ is a level set of $u$, and also a hypersurface in $\mathbb R^{n}$. The outward unit normal vector of $\partial\Omega$ is $\nu=\frac{Du}{|Du|}$. Then we can choose a suitable coordinate at $x_0$ such that $e_n=\nu$ and $e_1,\ldots,e_{n-1}$ are tangent vectors on the principal directions of $\partial\Omega$ at $x_0$. Then we have

\begin{equation*}
D^2u=
\begin{pmatrix}
 c_2\kappa_1 &  & 0& u_{1n} \\
  & \ddots & & \vdots \\
 0& & c_2\kappa_{n-1}& u_{(n-1)n} \\
 u_{n1}&\cdots&u_{n(n-1)}& u_{nn}
\end{pmatrix}.
\end{equation*}
By the boundary conditions, we can see that $w=\sqrt{1-c_2^2}$, $u_{\alpha}=0$ and $u_{n\alpha}=0$  for $\alpha=1,\ldots,n-1$.
Then, we obtain from \eqref{eqn:2.4} that
\begin{equation*}
(h^{j}_i)=
\begin{pmatrix}
 c_2\frac{\kappa_1}{w} &  & &  \\
  & \ddots & &  \\
 & & c_2\frac{\kappa_{n-1}}{w}& \\
 & & &h^n_n
\end{pmatrix}.
\end{equation*}

Hence, from $\frac{S_k(A)}{S_l(A)}=\frac{\binom{n}{k}}{\binom{n}{l}}>0$, we have
\begin{equation}\label{eqn:3.3}
0<S_k(h^j_i)=h^n_nS_{k-1}(\tilde \kappa)+S_k(\tilde\kappa),
\end{equation}
where $\tilde\kappa=c_2\frac{\kappa}{\sqrt{1-c_2^2}}$.
This combining with Newton-MacLaurin inequality \eqref{eqn:2.5} yield to
$$
S_{k-1}(A)=h^n_nS_{k-2}(\tilde\kappa)+S_{k-1}(\tilde\kappa)\geq
-\frac{S_{k}(\tilde\kappa)S_{k-2}(\tilde\kappa)}{S_{k-1}(\tilde\kappa)}+S_{k-1}(\tilde\kappa)>0.
$$

Similarly, we obtain $S_m(A)>0$ for $m\in\{1,\ldots,k\}$ at $x_0$. Therefore $A\in\Gamma_k^+$ at $x_0$.
The above argument shows the set $\{x\in\bar\Omega|A(x)\in\Gamma^+_k\}$ is nonempty. Then the smoothness of $u$,
Newton-MacLaurin inequality and $S_k(A)>0$ imply $A\in \Gamma_k^+$ in $\bar\Omega$ (refer to \cite{GMY} for details).
\end{proof}

Next, we consider an auxiliary function
$$
P=\langle \phi,E_{n+1}\rangle-\langle e_{n+1},E_{n+1}\rangle.
$$

Let $F(A)=\frac{S_k(A)}{S_l(A)}$, $F^{j}_i=\frac{\partial F(A)}{\partial h^{i}_j}=F\left(\frac{(S_k)^j_i}{S_k}-\frac{(S_l)^j_i}{S_l}\right)$. From Proposition \ref{prop:2.3} and Lemma
\ref{lem:3.1}, we can see that $F^{j}_i\nabla_j\nabla^i=F^{j}_ig^{il}\nabla_j\nabla_l$ is  elliptic.

\begin{Lemma}\label{lem:3.2}
Let $u$ be a solution of \eqref{eqn:3.1}, then
$$
F^{j}_iP_j^i\geq0.
$$
Moreover, either
\begin{equation*}
P=-c+\frac{1}{\sqrt{1-c_2^2}} \ \ \  {\rm in}\ \bar\Omega
\end{equation*}
or
$$
P<-c+\frac{1}{\sqrt{1-c_2^2}} \ \ \  {\rm in}\ \Omega.
$$
\end{Lemma}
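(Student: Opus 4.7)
The plan is to verify $\mathcal{L}P\ge 0$ for the linearized operator $\mathcal{L}:=F^{j}_i\nabla_j\nabla^i$ with $F=S_k/S_l$, and then appeal to the strong maximum principle. Note first that the boundary value is trivially constant: the identities $\langle\phi,E_{n+1}\rangle=-u$ and $\langle e_{n+1},E_{n+1}\rangle=-1/\sqrt{1-|Du|^2}$ give $P=-c+1/\sqrt{1-c_2^2}$ on $\partial\Omega$, matching the asserted constant.

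To compute $\mathcal{L}P$, I would differentiate $P$ twice using the Gauss--Weingarten formulas in \eqref{eqn:2.6}. Since $\phi_{ij}=h_{ij}e_{n+1}$ and $(e_{n+1})_i=h^{s}_i\phi_s$, a direct calculation gives
\begin{equation*}
\nabla_i\nabla_j P=\langle e_{n+1},E_{n+1}\rangle\bigl(h_{ij}-h^{s}_i h_{sj}\bigr)-h^{s}_{j,i}\,\langle\phi_s,E_{n+1}\rangle.
\end{equation*}
When this is contracted with $F^{j}_i g^{il}$, the mixed third-derivative term drops out: differentiating the scalar identity $F(A)=\binom{n}{k}/\binom{n}{l}$ yields $F^{j}_i\nabla_m h^{i}_j=0$, and Codazzi \eqref{eqn:2.8} supplies the index swap needed to align this with $h^{s}_{j,i}$.

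With that term eliminated, I would reduce the two surviving scalar contractions to symmetric-function expressions. Euler's identity, applied to the degree $k-l$ homogeneous function $F$, gives $F^{j}_i h^{i}_j=(k-l)F$. For the quadratic piece, the second identity in Proposition~\ref{prop:2.1} applied separately to $S_k$ and $S_l$ yields
\begin{equation*}
F^{j}_i h^{s}_j h^{i}_s=F\Bigl[(l+1)\tfrac{S_{l+1}}{S_l}-(k+1)\tfrac{S_{k+1}}{S_k}\Bigr].
\end{equation*}
Using that $\langle e_{n+1},E_{n+1}\rangle=-1/w<0$ and $F>0$, the inequality $\mathcal{L}P\ge 0$ reduces to
\begin{equation*}
(l+1)\tfrac{S_{l+1}}{S_l}-(k+1)\tfrac{S_{k+1}}{S_k}\ge k-l.
\end{equation*}

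Here the decisive input is Lemma~\ref{lem:2.4}: since $A\in\Gamma_k$ by Lemma~\ref{lem:3.1} and the normalization forces $S_k/S_l=\binom{n}{k}/\binom{n}{l}$, the two sharp bounds $(l+1)S_{l+1}/S_l\ge n-l$ and $(k+1)S_{k+1}/S_k\le n-k$ hold simultaneously, and subtraction gives exactly $(n-l)-(n-k)=k-l$. Ellipticity of $\mathcal{L}$ follows from Proposition~\ref{prop:2.3}, so $P$ is an $\mathcal{L}$-subsolution with constant boundary data, and the strong maximum principle produces the stated dichotomy. I expect the only delicate point to be this ``double-bound cancellation'': that the Euler term $(k-l)$ is absorbed precisely by the two Newton--MacLaurin bounds of Lemma~\ref{lem:2.4}, and that simultaneous equality in both forces $A=I$, the umbilical (hyperbolic) configuration — exactly what is needed for the rigidity half of Theorem~\ref{main1}.
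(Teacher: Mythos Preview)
Your proposal is correct and follows essentially the same route as the paper's proof: compute $\nabla_i\nabla_j P$ via the Gauss--Weingarten relations, kill the third-order term using Codazzi and $\nabla F=0$, reduce the remaining contractions to $(k-l)F$ and $F\bigl[(l+1)S_{l+1}/S_l-(k+1)S_{k+1}/S_k\bigr]$ via Proposition~\ref{prop:2.1}, and close with the two bounds of Lemma~\ref{lem:2.4} plus the strong maximum principle. Your framing of $F^j_ih^i_j=(k-l)F$ as Euler's identity and your explicit check of the boundary value are minor presentational additions, but the argument is the same.
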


\begin{proof}
 By direct computation and \eqref{eqn:2.6}, we have
\begin{equation*}
\begin{aligned}
P_i=&\langle \phi_i,E_{n+1}\rangle-\langle e_{(n+1)i},E_{n+1}\rangle\\
=&\langle \phi_i,E_{n+1}\rangle-h^{s}_i\langle \phi_s,E_{n+1}\rangle.
\end{aligned}
\end{equation*}
Moreover, we have
\begin{equation*}
\begin{aligned}
P_{ij}=&\langle \phi_{ij},E_{n+1}\rangle-h^{s}_{i,j}\langle \phi_s,E_{n+1}\rangle-h^{s}_i\langle \phi_{sj},E_{n+1}\rangle\\
=&(h_{ij}-h^{s}_ih_{sj})\langle e_{n+1},E_{n+1}\rangle+h^{s}_{i,j}u_s.
\end{aligned}
\end{equation*}

From Proposition \ref{prop:2.1}, we have
\begin{equation}\label{eqn:F1}
F^i_jh^j_i=F\left(\frac{(S_k)^i_j h^j_i}{S_k}-\frac{(S_l)^i_j h^j_i}{S_l}\right)=(k-l)F,
\end{equation}
and
\begin{equation}\label{eqn:F2}
F^i_jh^s_ih^j_s=F\left(\frac{(S_k)^i_j h^s_ih^j_s}{S_k}-\frac{(S_l)^i_jh^s_ih^j_s}{S_l}\right)=F\left(-(k+1)\frac{S_{k+1}}{S_k}+(l+1)\frac{S_{l+1}}{S_l}\right).
\end{equation}
We also note that $\nabla^mF=0$ since $F$ is constant.
Then we derive that, by \eqref{eqn:2.8},
\begin{equation*}
\begin{aligned}
F^i_j\nabla_i\nabla^jP=&F^i_jg^{lj}P_{li}\\
=&F^i_jg^{lj}(h_{lj}-h^{s}_{i}h_{sl})\langle e_{n+1},E_{n+1}\rangle+F^i_jg^{lj}h^{s}_{l,i}u_s\\
=&F\left(k-l+(k+1)\frac{S_{k+1}}{S_k}+(l+1)\frac{S_{l+1}}{S_l}\right)\langle e_{n+1},E_{n+1}\rangle+F^i_jg^{lj}h^{s}_{i,l}u_s\\
=&F\left(k-l+(k+1)\frac{S_{k+1}}{S_k}-(l+1)\frac{S_{l+1}}{S_l}\right)\langle e_{n+1},E_{n+1}\rangle.
\end{aligned}
\end{equation*}

Note that $\langle e_{n+1},E_{n+1}\rangle=-\frac{1}w<0$, $\frac{S_{k+1}}{S_k}\leq\frac{n-k}{k+1}$ and  $\frac{S_{l+1}}{S_l}\geq\frac{n-l}{l+1}$ in Lemma \ref{lem:2.4}. We can obtain $F^i_jP^j_i\geq0$

Since $F^i_j\nabla_i\nabla^j$ is elliptic, by the strong maximum principle and boundary conditions, we obtain the assertion.
\end{proof}

Then, we establish an integral equality on $\Omega$.
\begin{Lemma}\label{lem:3.3}
If $u$ is a solution to \eqref{eqn:3.1} and $\eqref{eqn:3.2}$, then
\begin{equation}
\begin{aligned}
0=&\int_{\Omega}\left(k\binom{n}{l}S_k-l\binom{n}{k}S_l\right)(u-c)\text{d}x\\
&+\int_{\Omega}\left((n-k+1)\binom{n}{l}S_{k-1}-(n-l+1)\binom{n}{k}S_{l-1}\right)\left(\langle e_{n+1},E_{n+1}\rangle+\frac{1}{\sqrt{1-c_2^2}}\right) \text{d}x.
\end{aligned}
\end{equation}

\end{Lemma}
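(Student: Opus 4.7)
The plan is to derive two independent integration-by-parts identities on the hypersurface $M$ for each index $m\in\{k,l\}$ and then combine them with coefficients $\binom{n}{l}$ and $-\binom{n}{k}$ so that the algebraic constraint $\binom{n}{l}S_k=\binom{n}{k}S_l$ collapses the unwanted terms. The two tools I will use repeatedly are the divergence-free property of the Newton tensor $T_m^{ij}:=(S_m)^{ij}$ on $M$ (i.e.\ $\nabla_i T_m^{ij}=0$, as stated in the Proposition at the end of Section~2.2.1) and the Gauss relation $\nabla_i\nabla_j\phi=h_{ij}e_{n+1}$, whose inner product with $E_{n+1}$ produces the pointwise identity
$$\nabla_i\nabla_j u \;=\; -h_{ij}\langle e_{n+1},E_{n+1}\rangle.$$

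For each $m$, the first identity comes from integrating
$$\nabla_i\bigl[T_m^{ij}(u-c)u_j\bigr] \;=\; T_m^{ij}u_iu_j \;-\; m(u-c)S_m\langle e_{n+1},E_{n+1}\rangle,$$
where I use $T_m^{ij}h_{ij}=mS_m$ (from Proposition~\ref{prop:2.1}). Since $u-c$ vanishes on $\partial M$, the divergence theorem on $M$ produces, after converting via $dv_M=w\,dx$ and $\langle e_{n+1},E_{n+1}\rangle=-1/w$, the scalar identity
$$m\int_\Omega S_m(u-c)\,dx \;=\; -\int_M T_m^{ij}u_iu_j\,dv_M. \qquad (\star_m)$$

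The second identity is a Minkowski-type relation obtained by applying the divergence theorem to $T_m^{ij}X^\top_j$ with $X^\top_j:=\langle X,\phi_j\rangle$ for the ambient translated position $X=\phi-cE_{n+1}$. Since $\bar\nabla_i X=\phi_i$, the product rule gives $\nabla_i X^\top_j = g_{ij}+h_{ij}\langle X,e_{n+1}\rangle$, and using $T_m^{ij}g_{ij}=(n-m+1)S_{m-1}$ together with $\langle X,e_{n+1}\rangle=\langle\phi,e_{n+1}\rangle-c\langle e_{n+1},E_{n+1}\rangle$ yields
$$\int_M\!\bigl[(n{-}m{+}1)S_{m-1} + mS_m\langle\phi,e_{n+1}\rangle - cmS_m\langle e_{n+1},E_{n+1}\rangle\bigr]\,dv_M \;=\; \int_{\partial M}\! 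T_m^{ij}x_j\nu_i\,dS,$$
since on $\partial M$ one has $u=c$ and therefore $X^\top_j=x_j$.

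The identity of the Lemma then follows by forming $\binom{n}{l}\cdot(\star_k)-\binom{n}{k}\cdot(\star_l)$ together with the analogous combination of the Minkowski-type identities. The constraint $\binom{n}{l}S_k=\binom{n}{k}S_l$ immediately annihilates the $\langle\phi,e_{n+1}\rangle$-terms (which have no natural boundary data), while the boundary integrals $\binom{n}{l}\int_{\partial M}T_k^{ij}x_j\nu_i\,dS-\binom{n}{k}\int_{\partial M}T_l^{ij}x_j\nu_i\,dS$ cancel because the overdetermined condition $|Du|=c_2$ forces $u_j=|\nabla u|_g\nu_j$ on $\partial M$ with $|\nabla u|_g$ constant, reducing each boundary integrand to a fixed multiple of $T_m^{ij}\nu_i\nu_j$ and leaving only a combination killed again by the same constraint. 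After these cancellations the remaining volume integrals are exactly the two in Lemma~\ref{lem:3.3}. The main obstacle is the $\langle\phi,e_{n+1}\rangle$-bookkeeping in the combination step: these terms do not cancel term-by-term and only vanish after the algebraic identity $\binom{n}{l}S_k=\binom{n}{k}S_l$ is invoked pointwise, so the proof must interleave the two $m=k,l$ identities rather than treat them separately.
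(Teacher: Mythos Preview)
Your plan has a genuine gap in each of the three cancellation claims, and the underlying issue is that you are missing the actual mechanism the proof needs: it is the \emph{differentiated} constraint $\binom{n}{l}\,\partial_j S_k=\binom{n}{k}\,\partial_j S_l$, not the algebraic identity $\binom{n}{l}S_k=\binom{n}{k}S_l$, that makes the unwanted terms disappear.

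Concretely: (i) In your Minkowski-type identity the $\langle\phi,e_{n+1}\rangle$-term enters with coefficient $mS_m$, so the combination $\binom{n}{l}\cdot kS_k-\binom{n}{k}\cdot lS_l=(k-l)\binom{n}{l}S_k$ is \emph{not} zero; those terms do not ``immediately annihilate'' as you assert. (ii) The same identity produces $(n{-}m{+}1)S_{m-1}$ with weight $dv_M=w\,dx$, whereas the lemma requires the weight $\langle e_{n+1},E_{n+1}\rangle\,dx=-w^{-1}\,dx$; these differ by a factor $w^2$ and there is no device in your argument to convert one to the other. (iii) Your boundary claim is also false: on $\partial M$ one has $X^\top_j=x_j$, and $x_j$ is not a constant multiple of the conormal $\nu_j$, so $T_m^{ij}x_j\nu_i$ is not a fixed multiple of $T_m^{ij}\nu_i\nu_j$, and the stated reduction via $u_j=|\nabla u|_g\,\nu_j$ is irrelevant to that integrand.

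The paper's proof avoids all of this by working in the flat coordinates on $\Omega$ (not intrinsically on $M$). Starting from $(n{-}m{+}1)\!\int_\Omega S_{m-1}\langle e_{n+1},E_{n+1}\rangle\,dx=\int_\Omega (S_m)^i_j\,\delta^j_i\,\langle e_{n+1},E_{n+1}\rangle\,dx$ and integrating by parts twice against the position field $x_j$, one obtains for each $m$ an identity whose only ``bad'' residual is $\int_\Omega x_j\,\partial_j S_m\,(u-c)\,dx$. Taking the combination $\binom{n}{l}\cdot(\text{case }k)-\binom{n}{k}\cdot(\text{case }l)$ then kills these residuals precisely because the ratio $S_k/S_l$ is constant, hence $\binom{n}{l}\partial_j S_k=\binom{n}{k}\partial_j S_l$ pointwise. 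This differentiated form of the constraint is the key step your proposal is missing; without it the intrinsic route you sketch cannot close.
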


\begin{proof}
By the Proposition \ref{prop:2.1}, we derive that
\begin{equation}\label{eqn:3.5}
\begin{aligned}
&(n-k+1)\int_{\Omega}S_{k-1}\langle e_{n+1},E_{n+1}\rangle \text{d}x=\int_{\Omega}(S_k)^i_j\frac{\partial x_j}{\partial x_i}\langle e_{n+1},E_{n+1}\rangle \text{d}x\\
=&\int_{\Omega}\frac{\partial}{\partial x_i}\left((S_k)^i_jx_j\langle e_{n+1},E_{n+1}\rangle\right)\text{d}x-\int_{\Omega}(S_k)^i_jx_j\frac{\partial }{\partial x_i}\langle e_{n+1},E_{n+1}\rangle \text{d}x.
\end{aligned}
\end{equation}

On the one hand, by the boundary condition $\langle e_{n+1},E_{n+1}\rangle=-\frac1{\sqrt{1-c_2^2}}$ and
the divergence theorem, we obtain
\begin{equation}\label{eqn:3.6}
\begin{aligned}
&\int_{\Omega}\frac{\partial}{\partial x_i}\left((S_k)^i_jx_j\langle e_{n+1},E_{n+1}\rangle\right)\text{d}x=-\frac1{\sqrt{1-c_2^2}}\int_{\partial\Omega}(S_k)^i_jx_j\nu_i \text{d}S\\
=&-\frac1{\sqrt{1-c_2^2}}\int_{\partial\Omega}\frac{\partial }{\partial x_i}((S_k)^i_jx_j)\text{d}x
=-\frac{n-k+1}{\sqrt{1-c_2^2}}\int_{\Omega}S_{k-1} \text{d}x.
\end{aligned}
\end{equation}

On the other hand, by the boundary condition $u=c$, divergence theorem and \eqref{eqn:2.7}, we have
\begin{equation}\label{eqn:3.7}
\begin{aligned}
\int_{\Omega}(S_k)^i_jx_j\frac{\partial }{\partial x_i}\langle e_{n+1},E_{n+1}\rangle \text{d}x=&-\int_{\Omega}(S_k)^i_jx_jh^s_iu_s \text{d}x\\
=&-\int_{\Omega}(S_k)^s_ix_jh^i_j\frac{\partial }{\partial x_s}(u-c) \text{d}x\\
=&-\int_{\Omega}\frac{\partial }{\partial x_s}\left((S_k)^s_ix_jh^i_j(u-c)\right)\text{d}x
+\int_{\Omega} kS_k(u-c)\text{d}x\\
&+\int_{\Omega}(S_k)^s_ix_j\frac{\partial }{\partial x_s}h^i_j(u-c) \text{d}x\\
=&\int_{\Omega} kS_k(u-c)\text{d}x+\int_{\Omega}x_j\frac{\partial }{\partial x_j}S_{k}(u-c) \text{d}x.
\end{aligned}
\end{equation}

Substituting \eqref{eqn:3.6} and \eqref{eqn:3.7} into \eqref{eqn:3.5} give us
\begin{equation}\label{eqn:3.8}
\begin{aligned}
0=&(n-k+1)\int_{\Omega}S_{k-1}\left(\langle e_{n+1},E_{n+1}\rangle+\frac{1}{\sqrt{1-c_2^2}}\right) \text{d}x\\
&+\int_{\Omega}kS_k(u-c)\text{d}x+\int_{\Omega}x_j\frac{\partial}{\partial x_j}S_k(u-c)\text{d}x.
\end{aligned}
\end{equation}

Similarly, we can obtain
\begin{equation}\label{eqn:3.9}
\begin{aligned}
0=&(n-l+1)\int_{\Omega}S_{l-1}\left(\langle e_{n+1},E_{n+1}\rangle+\frac{1}{\sqrt{1-c_2^2}}\right) \text{d}x\\
&+\int_{\Omega}lS_l(u-c)\text{d}x+\int_{\Omega}x_j\frac{\partial}{\partial x_j}S_l(u-c)\text{d}x.
\end{aligned}
\end{equation}
Since $\binom{n}{k}S_l=\binom{n}{l}S_k$, we have $$\binom{n}{k}\frac{\partial}{\partial x_j}S_l=\binom{n}{l}\frac{\partial}{\partial x_j}S_k.$$
This combining with \eqref{eqn:3.8} and \eqref{eqn:3.9} yield to the assertion.
\end{proof}

Finally, we will prove the Theorem \ref{main1}. Let $$Q:=(n-k+1)\binom{n}{l}S_{k-1}-(n-l+1)\binom{n}{k}S_{l-1},$$
and
$$
M=k\binom{n}{l}S_k-l\binom{n}{k}S_l=k\binom{n}{l}S_k\left(1-\frac{l\binom{n}{k}S_l}{k\binom{n}{l}S_k}\right)
=(k-l)\binom{n}{l}S_k>0.
$$
Note that
$$
\frac{S_{k-1}/\binom{n}{k-1}}{S_{l-1}/\binom{n}{l-1}}\geq
\left(\frac{S_{k}/\binom{n}{k}}{S_{l}/\binom{n}{l}}\right)^{\frac{k-1}k}=1.
$$
Then we can see that
\begin{equation}
\begin{aligned}
Q=&(n-k+1)\binom{n}{l}S_{k-1}\left(1-\frac{(n-l+1)\binom{n}{k}S_{l-1}}{(n-k+1)\binom{n}{l}S_{k-1}}\right)\\
=&(n-k+1)\binom{n}{l}S_{k-1}\left(1-\frac{l\binom{n}{k-1}S_{l-1}}{k\binom{n}{l-1}S_{k-1}}\right)\\
\geq&(n-k+1)S_{k-1}\left(1-\frac{l}{k}\right)\\
\geq&(k-l)\binom{n}{l}\frac{(n-k+1)}{k}S_{k-1}.
\end{aligned}
\end{equation}
From Lemma \ref{lem:2.4}, we have $\frac{S_{k-1}}{S_k}\geq\frac{k}{n-k+1}$. Then, we have
$$
M-Q=(k-l)\binom{n}{l}\left(S_k-\frac{(n-k+1)}{k}S_{k-1}\right)\leq0.
$$
From \eqref{eqn:2.6}, we have
\begin{equation}
(S_k)^i_j\nabla_i\nabla^ju=-(S_k)^i_j\langle \phi_{i}^j,E_{n+1}\rangle= -(S_k)^i_jh_i^j\langle e_{n+1},E_{n+1}\rangle=-kS_k\langle e_{n+1},E_{n+1}\rangle>0.
\end{equation}
By the maximum principle, we know that $u<c$ in $\Omega$.
Thus, we obtain from Lemma \ref{lem:3.3} that
\begin{equation*}
\begin{aligned}
&\int_{\Omega}\left(P+c-\frac1{\sqrt{1-c_2^2}}\right)Q\text{d}x\\
=&-\int_{\Omega}(u-c)Q\text{d}x-\int_{\Omega}\left(\langle e_{n+1},E_{n+1}\rangle+\frac{1}{\sqrt{1-c_2^2}}\right)Q\text{d}x\\
=&\int_{\Omega}(u-c)(M-Q)\text{d}x\geq0,
\end{aligned}
\end{equation*}
which contradicts $P<-c+\frac1{\sqrt{1-c_2^2}}$ in $\Omega$.

Hence $P\equiv-c+\frac1{\sqrt{1-c_2^2}}$ in $\bar\Omega$  which implies $\left(\frac{u_j}{\sqrt{1-|Du|^2}}\right)_i=h^{j}_i=\delta_{ij}$. As
a result, $M$ is a part of the hyperboloid, $\Omega=B_R(a)$ is a ball and
$$u=c+\theta_0+\sqrt{1+|x-a|^2},$$
where $R=\sqrt{\theta_0^2-1}$ and fixed $a\in\mathbb R^n$.

This complete the proof of Theorem \ref{main1}.

\section{The proof of Theorem \ref{main2}}~~~~
In this section, we will prove the Theorem \ref{main2}.

From the assumption of Theorem \ref{main2}, By a translation and rescaling we may assume, without loss of generality, that
$$\frac{H_k}{H_l}=\frac{\binom{n}{k}}{\binom{n}{l}}.$$
Then, from \eqref{eqn:2.12} we have
\begin{equation}\label{eqn:4.1}
\frac{S_{k}(A)}{S_{l}(A)}=\frac{S_{n-k}(B)}{S_{n-l}(B)}=\frac{\binom{n}{k}}{\binom{n}{l}} \ \ {\rm in}\ M
\end{equation}
and  $\partial M$ is in a hyperboloid or lightcone
\begin{equation*}
\bar\Sigma=\{x\in\mathbb R^{n,1}|\langle x,x\rangle=c\leq0 \ {\rm and }\ x_{n+1}>0\}.
\end{equation*}

This gives $\langle \phi,\phi\rangle=c$ on $\partial M$. Then the condition that $\theta$ is constant means
$\langle \phi,e_{n+1}\rangle=c_2$ on $\partial  M$.

Assumption that $M$ is on $\bar\Sigma$ implies that
\begin{equation*}
\langle \phi,\phi\rangle\leq c\leq0\ {\rm and}\ \phi\in\mathbb R^{n,1}\ {\rm in}\ M.
\end{equation*}

Then Proposition \ref{prop:1} indicates that $\langle \phi,e_{n+1}\rangle<0$ in $M$.

Firstly, we consider an auxiliary function
$$
P=\frac12\langle\phi,\phi\rangle-\langle \phi,e_{n+1}\rangle.
$$
Then we obtain the following Lemma.
\begin{Lemma}\label{lem:4.1}
Let $u$ be a solution of \eqref{eqn:4.1}. Then
$$
F^{j}_iP_j^i\geq0.
$$
\begin{equation*}
P\equiv\frac12c-c_2 \ \ \  {\rm in}\ M\cup\partial M,
\end{equation*}
or
$$
P<\frac12c-c_2 \ \ \  {\rm in}\ M.
$$
\end{Lemma}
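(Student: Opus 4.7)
The strategy parallels that of Lemma 3.2: compute first and second covariant derivatives of $P$, contract with the Newton-type operator $F^{ij}:=F^i_k g^{kj}$ (symmetric and positive definite by Proposition \ref{prop:2.3}), and apply the strong maximum principle. Using $\langle\phi_i,e_{n+1}\rangle=0$ together with the Weingarten formula $e_{(n+1)i}=h^s_i\phi_s$, one obtains
\[
P_i=\langle\phi_i,\phi\rangle - h^s_i\langle\phi,\phi_s\rangle.
\]
Differentiating again and invoking the Gauss formula \eqref{eqn:Gauss}, a direct computation gives
\[
P_{ij}=(h_{ij}-h^s_i h_{sj})\langle\phi,e_{n+1}\rangle + g_{ij} - h_{ij} - h^s_{i,j}\langle\phi,\phi_s\rangle.
\]
The extra $g_{ij}-h_{ij}$ terms (absent in Lemma 3.2) arise because the position vector $\phi$ is not a constant ambient vector; they are the structural novelty of this computation.

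Next I contract with $F^{ij}$. The identities \eqref{eqn:F1}--\eqref{eqn:F2} give $F^{ij}h_{ij}=(k-l)F$ and $F^{ij}h^s_i h_{sj}=F\bigl[-(k+1)\frac{S_{k+1}}{S_k}+(l+1)\frac{S_{l+1}}{S_l}\bigr]$, while Proposition \ref{prop:2.1} yields $F^{ij}g_{ij}=F^i_i=F\bigl[(n-k+1)\frac{S_{k-1}}{S_k}-(n-l+1)\frac{S_{l-1}}{S_l}\bigr]$. The cross term $F^{ij}h^s_{i,j}\langle\phi,\phi_s\rangle$ vanishes: constancy of $F$ forces $F^i_j h^j_{i,l}=0$ for every $l$, and Codazzi \eqref{eqn:2.8} (working in an orthonormal frame at a point so that $h^s_j=h^j_s$ numerically) converts $F^{ij}h^s_{i,j}$ into exactly this form after relabeling indices. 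Collecting,
\[
F^{ij}P_{ij}=F\Bigl[(k-l)+(k+1)\tfrac{S_{k+1}}{S_k}-(l+1)\tfrac{S_{l+1}}{S_l}\Bigr]\langle\phi,e_{n+1}\rangle + F^i_i - (k-l)F.
\]

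For nonnegativity, I apply Lemma \ref{lem:2.4} twice. First, $(k+1)\frac{S_{k+1}}{S_k}\le n-k$ and $(l+1)\frac{S_{l+1}}{S_l}\ge n-l$ force the bracket to be $\le 0$; since $\phi$ is time-like or light-like, $e_{n+1}$ is time-like, and both lie in $\mathbb R^{n,1}_+$, Proposition \ref{prop:1} gives $\langle\phi,e_{n+1}\rangle<0$, whence the first summand is $\ge 0$. Second, $(n-k+1)\frac{S_{k-1}}{S_k}\ge k$ and $(n-l+1)\frac{S_{l-1}}{S_l}\le l$ yield $F^i_i\ge (k-l)F$. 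Hence $F^{ij}P_{ij}\ge 0$. On $\partial M$ the boundary data $\langle\phi,\phi\rangle=c$ and $\langle\phi,e_{n+1}\rangle=c_2$ give $P\equiv \tfrac12 c-c_2$, and the dichotomy then follows from the strong maximum principle applied to the elliptic subsolution $P$. I expect the main obstacle to be the index gymnastics needed to verify $F^{ij}h^s_{i,j}=0$: the contracted indices sit in different slots than in the canonical identity $F^i_j h^j_{i,l}=0$, so one must combine Codazzi symmetry with the symmetry of $F^{ij}$ carefully (most transparently via an orthonormal frame) to avoid spurious metric factors.
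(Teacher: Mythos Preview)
Your proof is correct and follows essentially the same route as the paper's: compute $P_i$ and $P_{ij}$ via the Gauss and Weingarten formulas, contract with $F^i_j$, use \eqref{eqn:F1}, \eqref{eqn:F2} and the trace identity \eqref{eqn:F3} together with $\nabla F=0$ and Codazzi to kill the derivative term, then apply Lemma~\ref{lem:2.4} and $\langle\phi,e_{n+1}\rangle<0$ to obtain $F^j_i\nabla^i\nabla_jP\ge 0$, and conclude with the strong maximum principle. Your explicit discussion of the index juggling needed to convert $F^{ij}h^s_{i,j}$ into $\nabla^s F$ is a helpful elaboration of a step the paper records in one line.
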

\begin{proof}
Direct calculations show
\begin{equation*}
P_j=\nabla_jP=\langle\phi,\phi_j\rangle-\langle\phi,h^s_j\phi_s\rangle
\end{equation*}
and
\begin{equation*}
\begin{aligned}
P_{ji}=&\nabla_i\nabla_jP=\langle\phi_i,\phi_j\rangle-\langle\phi_i,h^s_j\phi_s\rangle
+\langle\phi,\phi_{ji}\rangle-\langle\phi,h^s_{j,i}\phi_s\rangle-\langle\phi,h^s_{j}\phi_{si}\rangle\\
=&g_{ij}-h_{ij}+h_{ij}\langle \phi, e_{n+1}\rangle-h^s_{j,i}\langle\phi,\phi_s\rangle-h^s_{j}h_{si}\langle\phi,e_{n+1}\rangle.
\end{aligned}
\end{equation*}
By the Proposition \ref{prop:2.1}, we have
\begin{equation}\label{eqn:F3}
F^j_i\delta_j^i=F\left(\frac{(S_k)^i_j \delta^j_i}{S_k}-\frac{(S_l)^i_j \delta^j_i}{S_l}\right)
=F\left(\frac{(n-k+1)S_{k-1}}{S_k}-\frac{(n-l+1)S_{l-1}}{S_l}\right).
\end{equation}
Then, from \eqref{eqn:F1}, \eqref{eqn:F2}, \eqref{eqn:F3} and Codazzi equation, we derive that
 \begin{equation*}
\begin{aligned}
F^j_i\nabla^i\nabla_jP=&F^j_i\delta_j^i-F^j_ih_{j}^i+F^j_ih_{j}^i\langle \phi, e_{n+1}\rangle-F^j_i\nabla^ih^s_{j}\langle\phi,\phi_s\rangle-F^j_ih^s_{j}h_{s}^i\langle\phi,e_{n+1}\rangle\\
=&F\left(\frac{(n-k+1)S_{k-1}}{S_k}-\frac{(n-l+1)S_{l-1}}{S_l}-(k-l)\right)\\
&+\left(k-l+(k+1)\frac{S_{k+1}}{S_k}-(l+1)\frac{S_{l+1}}{S_l}\right)
F\langle\phi,e_{n+1}\rangle-\nabla^sF\langle\phi,\phi_s\rangle.
\end{aligned}
\end{equation*}

Note that $\nabla^sF=0$  and $\langle \phi,e_{n+1}\rangle<0$. From $$\frac{S_{k+1}}{S_k}\leq\frac{n-k}{k+1}, \ \ \frac{S_{l+1}}{S_l}\geq\frac{n-l}{l+1},\ \ \frac{S_{k-1}(A)}{S_k(A)}\geq\frac{k}{n-k+1},\ \ \frac{S_{l-1}(A)}{S_l(A)}\leq\frac{l}{n-l+1}$$ in Lemma \ref{lem:2.4},  we can see that  $F^j_i\nabla^i\nabla_jP\geq0$.
Then the strong maximum principle implies that, either $P\equiv\frac12c-c_2 $ in $M\cup\partial M$ or
$P<\frac12c-c_2 $ in $ M$.

This completes the proof of Lemma \ref{lem:4.1}.
\end{proof}

Define a positive function on a smooth domain $\Omega\subset \Sigma$ by
$$\Phi(z):=z_{n+1}=-\langle z,E_{n+1}\rangle.$$
By a direct calculation shows
\begin{equation*}
D_iD^j\Phi=-\tilde g^{jm}\langle \tilde g_{im}z,E_{n+1}\rangle=\Phi\delta^j_i.
\end{equation*}

Recall $M$ can be parametrized by the Gauss map and the position vector $\phi(z)=Du(z)-u(z)z$. Then we have
\begin{equation}\label{eqn:4.2}
\frac12D_i\langle \phi,\phi\rangle=\langle b^l_i\partial_lz,Du-uz\rangle=b^m_iu_m.
\end{equation}

Then, Gao \cite{Gao2} establish an integral equality on an open subset of the hyperboloid if $\langle \phi,\phi\rangle$ and $u$ are constant on the boundary.

\begin{Lemma}\label{lem:4.2} (Lemma 4 in \cite{Gao2})
If $\langle \phi,\phi\rangle=c$ and $u=c_2$ on $\partial\Omega$ for $c,c_2\in\mathbb R^n$, then, for $l< k$,
\begin{equation*}
\begin{aligned}
\frac{k+1}2\int_{\Omega}(\langle \phi,\phi\rangle-c)S_{n-k-1}(B)\Phi \text{d}\tilde\mu=&(n-k)\int_{\Omega}(u-c_2)S_{n-k}(B)\Phi \text{d}\tilde\mu\\
&+\int_{\Omega}(u-c_2)\tilde g(D(S_{n-k}(B)),D\Phi)\text{d}\tilde\mu,
\end{aligned}
\end{equation*}
and
\begin{equation*}
\begin{aligned}
\frac{l+1}2\int_{\Omega}(\langle \phi,\phi\rangle-c)S_{n-l-1}(B)\Phi \text{d}\tilde\mu=&(n-l)\int_{\Omega}(u-c_2)S_{n-k}(B)\Phi \text{d}\tilde\mu\\
&+\int_{\Omega}(u-c_2)\tilde g(D(S_{n-l}(B)),D\Phi)\text{d}\tilde\mu,
\end{aligned}
\end{equation*}
where $d\tilde\mu$ is the volume form of $\Sigma$.
\end{Lemma}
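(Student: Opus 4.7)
The plan is to derive each identity by two successive applications of the divergence theorem on $\Omega \subset \Sigma$, exploiting four structural facts: (i) the Codazzi-type divergence-free property $D_i(S_m)^{ij}(B) = 0$ from Proposition \ref{prop:2.6}; (ii) the Hessian identity $D_iD_j\Phi = \Phi\,\tilde g_{ij}$; (iii) the Newton identity of Proposition \ref{prop:2.1} applied to $B$, giving $(S_m)^l_i(B)\, b^j_l = S_m(B)\delta^j_i - (S_{m+1})^j_i(B)$; and (iv) the trace relation $(S_m)^i_i(B) = (n-m+1)S_{m-1}(B)$. The boundary data $\langle\phi,\phi\rangle = c$ and $u = c_2$ on $\partial\Omega$ ensure that the boundary contributions produced by the divergence theorem vanish at each stage.

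For the first identity, I would apply the divergence theorem to the vector field $V^j = (S_{n-k})^{ij}(B)(\langle\phi,\phi\rangle - c)D_j\Phi$. Because $\langle\phi,\phi\rangle = c$ on $\partial\Omega$, the boundary integral of $V^j \nu_j$ vanishes. Expanding $D_jV^j$, using (i) to annihilate the derivative landing on $(S_{n-k})^{ij}(B)$, and applying (ii) together with the trace $(S_{n-k})^i_i(B) = (k+1)S_{n-k-1}(B)$, one contribution assembles into the target LHS $(k+1)\int(\langle\phi,\phi\rangle-c)S_{n-k-1}(B)\Phi\,d\tilde\mu$. The remaining contribution $\int (S_{n-k})^{ij}(B)\,D_i\langle\phi,\phi\rangle\,D_j\Phi\,d\tilde\mu$ equals, by \eqref{eqn:4.2}, $2\int (S_{n-k})^{ij}(B)\,b^m_i u_m\,D_j\Phi\,d\tilde\mu$. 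Applying (iii) to rewrite $(S_{n-k})^{ij}(B)b^m_i = S_{n-k}(B)\tilde g^{jm} - (S_{n-k+1})^{jm}(B)$ splits this into two pieces; each is then integrated by parts once more (with $u - c_2$ vanishing on $\partial\Omega$). The first piece $-\int S_{n-k}(B)\tilde g(Du, D\Phi)\,d\tilde\mu$ produces $\int(u-c_2)[\tilde g(DS_{n-k}(B), D\Phi) + nS_{n-k}(B)\Phi]\,d\tilde\mu$ via (ii) and the consequence $D^jD_j\Phi = n\Phi$; the second piece, using (i) for $S_{n-k+1}$, produces $-k\int(u-c_2)S_{n-k}(B)\Phi\,d\tilde\mu$ via $(S_{n-k+1})^i_i(B) = kS_{n-k}(B)$. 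Adding, the constants $n$ and $-k$ combine precisely into the stated coefficient $n-k$.

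The second identity then follows by running the identical argument with $n-k$ replaced by $n-l$ throughout; only the numerical coefficients change, producing $l+1$ in place of $k+1$ and $n-l$ in place of $n-k$. The main technical obstacle is the bookkeeping in the second round of integration by parts: one must carefully track the index placement when the Newton identity for $B$ interacts with the covariant derivatives on $\Sigma$, and verify that the three trace contributions $(k+1)S_{n-k-1}$, $nS_{n-k}$, and $-kS_{n-k}$ conspire to leave exactly the factors claimed. The fact that $\Phi$ has the distinguished Hessian $\Phi\tilde g_{ij}$ (so that both $D_iD_j\Phi$ and $D^jD_j\Phi$ collapse to multiples of $\Phi$) is what makes these cancellations align cleanly.
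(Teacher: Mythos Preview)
The paper does not prove this lemma at all; it simply quotes it as Lemma~4 of \cite{Gao2} and moves on, so there is no in-paper argument to compare against. Your strategy---two passes of the divergence theorem on $\Omega\subset\Sigma$, using the divergence-free Newton tensors (Proposition~\ref{prop:2.6}), the distinguished Hessian $D_iD_j\Phi=\Phi\,\tilde g_{ij}$, the relation \eqref{eqn:4.2}, and the algebraic identities of Proposition~\ref{prop:2.1}---is correct and is the natural (and presumably Gao's) proof.

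A few bookkeeping points to clean up when you write it out. First, your vector field should be $V^i=(S_{n-k})^{i}_{j}(B)(\langle\phi,\phi\rangle-c)D^{j}\Phi$; as written, ``$V^j=(S_{n-k})^{ij}\ldots D_j\Phi$'' leaves $i$ dangling. Second, the factor $2$ coming from $D_i\langle\phi,\phi\rangle=2b^{m}_{i}u_m$ must be carried through both pieces after the Newton split; in your sketch it appears at the split but then silently disappears (you write ``$-\int S_{n-k}\tilde g(Du,D\Phi)$'' and ``$-k\int$'' where the honest computation gives $-2\int$ and $-2k\int$). With the $2$ restored one finds
\[
(k+1)\int_{\Omega}(\langle\phi,\phi\rangle-c)S_{n-k-1}(B)\Phi\,d\tilde\mu
=2(n-k)\int_{\Omega}(u-c_2)S_{n-k}(B)\Phi\,d\tilde\mu
+2\int_{\Omega}(u-c_2)\tilde g(DS_{n-k}(B),D\Phi)\,d\tilde\mu,
\]
and dividing by $2$ gives exactly the stated identity. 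Finally, note that the occurrence of $S_{n-k}$ in the first right-hand term of the \emph{second} displayed identity of the lemma is a typographical slip for $S_{n-l}$; your ``replace $k$ by $l$'' produces the correct version.
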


Next, we also give an integral equality on $M$, which is the key to the
proof of Theorem \ref{main2}.
\begin{Lemma}\label{lem:4.3}
Let $\frac{S_{k}(A)}{S_{l}(A)}=\frac{S_{n-k}(B)}{S_{n-l}(B)}=\frac{\binom{n}{k}}{\binom{n}{l}}$ in $M$, $\langle\phi,\phi\rangle=c$ and $\langle \phi,e_{n+1}\rangle=c_2$ on $\partial M$, then we have
\begin{equation*}
\begin{aligned}
&\int_{M}(\langle \phi,\phi\rangle-c)\left(\binom{n}{l}\frac{k+1}2S_{k+1}(A)-\binom{n}{k}\frac{l+1}2S_{l+1}(A)\right)\langle e_{n+1},E_{n+1}\rangle \text{d}\mu\\
=&\int_{M}(\langle \phi,e_{n+1}\rangle-c_2)\left(\binom{n}{l}(n-k)S_{k}(A)-\binom{n}{k}(n-l)S_{l}(A)\right)\langle e_{n+1},E_{n+1}\rangle \text{d}\mu,
\end{aligned}
\end{equation*}
where $d\mu$ is volume form of $M$.
\end{Lemma}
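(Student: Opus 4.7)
The plan is to reduce Lemma \ref{lem:4.3} to Lemma \ref{lem:4.2} via the Gauss map parametrization. I would first apply Lemma \ref{lem:4.2} to the open set $\Omega := \mathcal G(M) \subset \Sigma$, observing that the boundary conditions $\langle \phi,\phi\rangle = c$ and $\langle \phi, e_{n+1}\rangle = c_2$ on $\partial M$ translate to $\langle\phi,\phi\rangle = c$ and $u = c_2$ on $\partial\Omega$, since $u(z) = \langle \phi(z), z\rangle = \langle \phi, e_{n+1}\rangle$ under the identification $z = e_{n+1}$.

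Next, I would combine the two identities from Lemma \ref{lem:4.2} so that the $\tilde g(D S_*, D\Phi)$ terms cancel. The key observation is that the curvature quotient condition $\tfrac{S_{n-k}(B)}{S_{n-l}(B)} = \tfrac{\binom{n}{k}}{\binom{n}{l}}$ holds as a pointwise identity on $\Omega$, so $\binom{n}{l} S_{n-k}(B) \equiv \binom{n}{k} S_{n-l}(B)$ as functions, and therefore $\binom{n}{l}\, D S_{n-k}(B) = \binom{n}{k}\, D S_{n-l}(B)$. Multiplying the first identity of Lemma \ref{lem:4.2} by $\binom{n}{l}$ and the second by $\binom{n}{k}$ and subtracting would then eliminate the gradient term exactly, producing an identity on $\Omega$ with only $\Phi\, d\tilde\mu$ integrands.

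The final step is to pull this back from $\Omega \subset \Sigma$ to $M$ via the Gauss map. Since $\phi_i = b_i^s \partial_s z$, a standard Jacobian computation yields $d\mu = S_n(B)\, d\tilde\mu$ (here convexity of $M$ ensures $S_n(B) > 0$, so the change of variable is legitimate). Relation \eqref{eqn:2.12} then gives $S_{n-k}(B) = S_n(B)\, S_k(A)$, $S_{n-l}(B) = S_n(B)\, S_l(A)$, and analogously $S_{n-k-1}(B) = S_n(B)\, S_{k+1}(A)$, $S_{n-l-1}(B) = S_n(B)\, S_{l+1}(A)$; the factors of $S_n(B)$ cancel against the Jacobian. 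Substituting $\Phi = -\langle e_{n+1}, E_{n+1}\rangle$ and $u = \langle \phi, e_{n+1}\rangle$ then rewrites the whole equality in the form stated in Lemma \ref{lem:4.3}.

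The main bookkeeping issue, and the part I would perform most carefully, is sign tracking: the global minus sign coming from $\Phi = -\langle e_{n+1}, E_{n+1}\rangle$ must be absorbed by reversing the order of the two terms in each bracket, so that the left-hand bracket ends up as $\binom{n}{l}\tfrac{k+1}{2}S_{k+1}(A) - \binom{n}{k}\tfrac{l+1}{2}S_{l+1}(A)$ and the right-hand bracket as $\binom{n}{l}(n-k)S_k(A) - \binom{n}{k}(n-l)S_l(A)$, with the common factor $\langle e_{n+1}, E_{n+1}\rangle$ appearing (with its natural negative sign) on both sides. Apart from this, everything reduces to algebra and the change of variable; no maximum principle or further analytic input is required here.
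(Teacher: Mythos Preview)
Your proposal is correct and follows essentially the same route as the paper: apply Lemma~\ref{lem:4.2} on $\Omega=\mathcal G(M)$, take the linear combination $\binom{n}{l}\times(\text{first identity})-\binom{n}{k}\times(\text{second})$ so that the quotient hypothesis kills the gradient terms, and then pull back to $M$ using $d\mu=S_n(B)\,d\tilde\mu$ together with \eqref{eqn:2.12}. Your only minor slip is the remark about ``reversing the order of the two terms in each bracket'': since the factor $-1$ from $\Phi=-\langle e_{n+1},E_{n+1}\rangle$ appears on both sides of the equality, it simply cancels and no reordering is needed.
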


\begin{proof}
Since $\frac{S_{n-k}(B)}{S_{n-l}(B)}=\frac{\binom{n}{k}}{\binom{n}{l}}$, we have $$
\binom{n}{k}D S_{n-l}(B)=\binom{n}{l}DS_{n-k}(B).$$
This combining with Lemma \ref{lem:4.2} yield to
\begin{equation*}
\begin{aligned}
&\int_{\Omega}(\langle \phi,\phi\rangle-c)\left(\binom{n}{l}\frac{k+1}2S_{n-k-1}(B)-\binom{n}{k}\frac{l+1}2S_{n-l-1}(B)\right)\Phi \text{d}\tilde\mu\\
=&\int_{\Omega}(u-c_2)\left(\binom{n}{l}(n-k)S_{n-k}(B)-\binom{n}{k}(n-l)S_{n-l}(B)\right)\Phi \text{d}\tilde\mu.
\end{aligned}
\end{equation*}

Now, we take $\Omega=e_{n+1}(M)$. By pulling back the integrals to $M$ via the Gauss map, we know
\begin{equation*}
\begin{aligned}
&\int_{M}(\langle \phi,\phi\rangle-c)\left(\binom{n}{l}\frac{k+1}2S_{k+1}(A)-\binom{n}{k}\frac{l+1}2S_{l+1}(A)\right)\langle e_{n+1},E_{n+1}\rangle \text{d}\mu\\
=&\int_{M}(\langle \phi,e_{n+1}\rangle-c_2)\left(\binom{n}{l}(n-k)S_{k}(A)-\binom{n}{k}(n-l)S_{l}(A)\right)\langle e_{n+1},E_{n+1}\rangle \text{d}\mu.
\end{aligned}
\end{equation*}

This completes the proof of Lemma \ref{lem:4.3}.
\end{proof}

Finally, we prove the Theorem \ref{main2}. Let
$$
Q=\binom{n}{l}(k+1)S_{k+1}(A)-\binom{n}{k}(l+1)S_{l+1}(A)
$$
and
$$
\tilde Q=\binom{n}{l}(n-k)S_{k}(A)-\binom{n}{k}(n-l)S_{l}(A)<0.
$$
Since $\frac{S_{k+1}(A)}{S_k(A)}\leq\frac{n-k}{k+1}$ and $\frac{S_{l+1}(A)}{S_l(A)}\geq\frac{n-l}{l+1}$ in Lemma \ref{lem:2.4}, we obtain
\begin{equation*}
\begin{aligned}
\tilde Q-Q=\binom{n}{l}[(n-k)S_{k}(A)-(k+1)S_{k+1}(A)]+\binom{n}{k}[(l+1)S_{l+1}(A)-(n-l)S_{l}(A)]\geq0.
\end{aligned}
\end{equation*}
From Lemma \ref{lem:4.3}, we derive that
\begin{equation*}
\begin{aligned}
\int_{M}(P-\frac12c+c_2)\tilde Q\langle e_{n+1},E_{n+1}\rangle \text{d}\mu=&\int_{M}\left(\frac12(\langle \phi,\phi\rangle-c)-(\langle \phi,e_{n+1}\rangle-c_2)\right)\tilde Q\langle e_{n+1},E_{n+1}\rangle \text{d}\mu\\
=&\int_{M}\frac12\left(\langle \phi,\phi\rangle-c\right)(\tilde Q-Q)\langle e_{n+1},E_{n+1}\rangle \text{d}\mu.
\end{aligned}
\end{equation*}

Note that $\langle \phi,\phi\rangle-c\leq0$ and $\langle e_{n+1},E_{n+1}\rangle<0$. Thus we have
$$
\int_{M}(P-\frac12c+c_2)\tilde Q\langle e_{n+1},E_{n+1}\rangle \text{d}\mu\geq0,
$$
which contracts $P<\frac12c-c_2$ in $M$.

Hence $P\equiv\frac12c-c_2$. As a result, principal curvatures $\lambda_1=\cdots=\lambda_n=1$ everywhere in $M$ which implies that $M$ is hyperboloid.

This completes the proof of Theorem \ref{main2}.

\section{The proof of Theorem \ref{thm:1.5}}
\subsection{Integral identities}~~~~
Let $M$ be the graph of a smooth function $x_{n+1} = u(x)$, and suppose that $M$ is a connected space-like hypersurface with boundary $\partial M$ in $\mathbb R^{n,1}$. If $\partial M$ is on the hyperplane $\mathbb R^n\times\{c\}$ and
$H$ is constant. Without loss of generality, we assume that
$H=n$.
Then we have
\begin{equation}\label{eqn:5.1}
D_i\left(\frac{u_i}w\right)=n\ \ \ \ {\rm in}\ \Omega,
\end{equation}
and
\begin{equation}\label{eqn:5.2}
u=c \ \ \ {\rm on}\ \partial\Omega,
\end{equation}
where $c$ is  constant.
Then, we can choose an orthogonal basis  $\{e_{\alpha}\}_1^{n-1}$ such that
$$
u_{\alpha\beta}=\tilde h_{\alpha\beta}|Du|=\lambda_{\alpha}\delta_{\alpha\beta},\ \  {\rm and }\ e_n=\frac{Du}{|Du|}, \ \ \ {\rm for }\ \alpha,\beta=1,\ldots,n-1,
$$
where  $\tilde h$ the second fundamental form of $\partial\Omega$.
It follows from \eqref{eqn:5.1} that
\begin{equation}\label{eqn:5.3}
n=\frac{u_{nn}+(n-1)H_{\partial\Omega}|Du|}w+\frac{u_{nn}}{w^3}|Du|^2=\frac{u_{nn}}{w^3}+\frac{(n-1)H_{\partial\Omega}|Du|}w
\end{equation}
on $\partial\Omega$, where $H_{\partial\Omega}=\frac{\sum\tilde h_{\alpha\alpha}}{n-1}$.

Since $\Delta P=\frac1w\partial_i(wg^{ij}\partial_jP)$ and $dV_M=wdx$,
then, by using  divergence theorem, we can see that
$$
\int_{M}\Delta PdV_M=\int_{\Omega}\partial_i(wg^{ij}\partial_jP)\text{d}x=\int_{\partial\Omega}wg^{ij}P_j\nu_jd\sigma_x,
$$
where $\nu$ is outward normal vector of $\partial\Omega$
 
Let $k=1$, $l=0$ in Lemma \ref{lem:3.1},  we have, by $P=-u+\frac1w$, \eqref{eqn:5.2}-\eqref{eqn:5.3} and above facts,
\begin{equation}\label{eqn:5.4}
\begin{aligned}
\int_{M}\left(\frac{n-1}nS_1^2-2S_2\right)\text{d}V_M\leq&\int_{M}\Delta PdV_M=\int_{M}\left(\frac{n-1}nS_1^2-2S_2\right)\frac1w\text{d}V_M\\
=&\int_{\Omega}\left(\frac{n-1}nS_1^2-2S_2\right)\text{d}x\\
=&\int_{\partial\Omega} \left( -|Du|+\frac{u_{nn}|Du|}{w^3}\right)\frac1w\text{d}\sigma_x\\
=&\int_{\partial\Omega} (n-1)\left(1-\frac{|Du|}wH_{\partial\Omega}\right)\frac{|Du|}{w}\text{d}\sigma_x.
\end{aligned}
\end{equation}

The fundamental identity \eqref{eqn:5.4} can be re-arranged at least into two ways to yield the Soap Bubble type Theorem.

\begin{Theorem}\label{thm:5.1}
Under  assumption of  Theorem \ref{thm:1.5}, then
 $u$ satisfies \eqref{eqn:5.1} and \eqref{eqn:5.2}. Let
\begin{equation*}
R_0=\frac{n|\Omega|}{|\partial\Omega|},\ \ \ H_0=\frac 1{R_{0}}.
\end{equation*}
Then, the following identity holds:
\begin{equation}\label{eqn:5.5}
\begin{aligned}
&\frac{1}{n-1}\int_{\Omega}\left(\frac{n-1}nS_1^2-2S_2\right)\text{d}x+\frac1{R_0}\int_{\partial\Omega}\left(\frac{|Du|}{w}-R_0\right)^2\text{d}\sigma_x\\
=&\int_{\partial\Omega}(H_0 -H_{\partial\Omega})\left(\frac{|Du|}{w}\right)^2\text{d}\sigma_x.
\end{aligned}
\end{equation}
Furthermore, if the mean curvature $H_{\partial\Omega}$ of $\partial\Omega$ satisfies the inequality $H_{\partial\Omega}\geq H_0$ on $\partial\Omega$, then $\Omega$ is a ball.
In particular the same conclusion holds if $H_{\partial\Omega}$ equals some constant on $\partial\Omega$.
\end{Theorem}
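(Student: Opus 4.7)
The plan is to extract the identity (5.5) from the fundamental identity (5.4) by a purely algebraic manipulation combined with one integration by parts, and then to run a Weinberger-type sign argument on (5.5) to force rigidity. More precisely, I would divide (5.4) by $(n-1)$ and then expand the square
\[
\tfrac{1}{R_0}\Bigl(\tfrac{|Du|}{w}-R_0\Bigr)^{2}=H_0\Bigl(\tfrac{|Du|}{w}\Bigr)^{2}-2\tfrac{|Du|}{w}+R_0,
\]
so that (5.5) reduces to the single missing ingredient
\[
\int_{\partial\Omega}\tfrac{|Du|}{w}\,d\sigma_x=R_0|\partial\Omega|=n|\Omega|.
\]
This last identity is immediate from (5.1) via the divergence theorem: on $\partial\Omega=\{u=c\}$ one has $Du=|Du|\nu$, so $\tfrac{|Du|}{w}=\tfrac{u_i\nu_i}{w}$ on $\partial\Omega$, and $\int_{\partial\Omega}\tfrac{u_i\nu_i}{w}\,d\sigma_x=\int_{\Omega}D_i\bigl(\tfrac{u_i}{w}\bigr)\,dx=n|\Omega|$ by (5.1). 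This establishes (5.5).

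For the rigidity claim under $H_{\partial\Omega}\ge H_0$, I would observe that the right-hand side of (5.5) is manifestly $\le 0$ and then argue that every term on the left-hand side is $\ge 0$. The boundary integral $\tfrac{1}{R_0}\int_{\partial\Omega}(\tfrac{|Du|}{w}-R_0)^2\,d\sigma_x$ is trivially nonnegative, and $|\nabla P|^2\ge 0$. The remaining bulk term is a product of three factors whose signs are controlled: the Newton-MacLaurin inequality (2.5) gives $2S_2-\tfrac{n-1}{n}S_1^{2}\le 0$; the normal component $\langle e_{n+1},E_{n+1}\rangle=-\tfrac{1}{w}<0$; and $P+c=-(u-c)+\tfrac{1}{w}>0$ once we know that $u<c$ in $\Omega$. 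This last positivity is the main auxiliary step I would isolate: at any interior maximum of $u$ one has $Du=0$ and $D^{2}u\le 0$, whence $\operatorname{div}(Du/w)\le 0$ at such a point, contradicting (5.1); together with the boundary datum $u=c$ and the strong maximum principle, this forces $u<c$ in $\Omega$.

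Combining these sign inequalities, both sides of (5.5) must vanish. The vanishing of the bulk integrand, together with the sign analysis above, forces equality in the Newton-MacLaurin inequality $2S_2\le\tfrac{n-1}{n}S_1^{2}$ pointwise in $\Omega$; hence the principal curvatures $\lambda_1,\dots,\lambda_n$ of $M$ coincide, and $S_1=n$ then gives $\lambda_i\equiv 1$. Therefore $M$ is a piece of the unit hyperboloid and $\Omega$ is a Euclidean ball. The "in particular" case where $H_{\partial\Omega}$ is merely a constant on $\partial\Omega$ either falls into the preceding analysis after verifying that the constant must equal $H_0$ (by integrating the identity derived in paragraph one), or can be handled by a direct appeal to Alexandrov's classical theorem applied to $\partial\Omega\subset\mathbb{R}^{n}$.

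The main obstacle I anticipate is the correct bookkeeping of signs in the bulk term: one needs the Newton-MacLaurin inequality to point the right way, the time-like normal factor to be negative, and the positivity $P+c>0$ to hold throughout $\Omega$. The last of these is not automatic from the boundary conditions alone and requires the separate maximum-principle argument sketched above. Once these three sign facts are in place, the remainder of the proof is a clean Weinberger-type "squeeze" of both sides of (5.5) to zero, together with the rigidity statement in Newton-MacLaurin.
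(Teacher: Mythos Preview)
Your derivation of the identity (5.5) matches the paper's exactly: integrate (5.1) against the divergence theorem to get $\int_{\partial\Omega}\frac{|Du|}{w}\,d\sigma_x=n|\Omega|$, then complete the square on the boundary and substitute into (5.4).

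For the rigidity step your route diverges slightly from the paper's. Both arguments begin the same way: the right-hand side of (5.5) is $\le 0$ under $H_{\partial\Omega}\ge H_0$, and the two left-hand summands are each $\ge 0$ (you carry out the sign bookkeeping $2S_2-\tfrac{n-1}{n}S_1^{2}\le 0$, $\langle e_{n+1},E_{n+1}\rangle<0$, $P+c>0$ explicitly, whereas the paper just records $P+c\ge 0$ and moves on). The difference is in which vanishing summand you exploit. You use the \emph{bulk} integral: its vanishing forces equality in Newton--MacLaurin pointwise, hence umbilicity of $M$, hence $M$ is a piece of a hyperboloid and $\Omega$ a ball. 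The paper instead uses the vanishing of the \emph{boundary} summand $\tfrac{1}{R_0}\int_{\partial\Omega}(\tfrac{|Du|}{w}-R_0)^2\,d\sigma_x$, deduces that $|Du|$ is constant on $\partial\Omega$, and then invokes the overdetermined-problem rigidity result (Theorem~\ref{main1} with $k=1$, $l=0$) to conclude. Your approach is more self-contained---it does not appeal back to Theorem~\ref{main1}---at the cost of needing the strict positivity $P+c>0$, which you correctly secure via the maximum principle $u<c$ in $\Omega$. The paper's approach is shorter on the page precisely because the heavy lifting has already been done in Section~3. For the ``$H_{\partial\Omega}$ constant'' clause, both you and the paper observe that the constant must be $H_0$ (from $\int_{\partial\Omega}\tfrac{|Du|}{w}\,d\sigma_x=n|\Omega|$), reducing to the previous case; your alternative suggestion of citing Alexandrov for $\partial\Omega\subset\mathbb{R}^n$ is also valid but unnecessary.
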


\begin{proof}
It follows from \eqref{eqn:5.1} and \eqref{eqn:5.2} that
\begin{equation*}
\int_{\partial\Omega}\frac{|Du|}{w}\text{d}x=n|\Omega|.
\end{equation*}
Then, we have
$$
\frac1{R_0}\int_{\partial\Omega}\left(\frac{|Du|}{w}\right)^2\text{d}\sigma_x
=\frac{1}{R_0}\int_{\partial\Omega}\left(\frac{|Du|}{w}-R_0\right)^2\text{d}\sigma_x+n|\Omega|
$$
and
\begin{equation*}
\int_{\partial\Omega}H_{\partial\Omega}\left(\frac{|Du|}{w}\right)^2\text{d}\sigma_x=\frac1{R_0}\int_{\partial\Omega}\left(\frac{|Du|}{w}-R_0\right)^2\text{d}\sigma_x
+n|\Omega|+\int_{\partial\Omega}(H_{\partial\Omega}-H_0)\left(\frac{|Du|}{w}\right)^2\text{d}\sigma_x.
\end{equation*}
Substituting them into \eqref{eqn:5.4} yield to \eqref{eqn:5.5}.

If $H_{\partial\Omega}\geq H_0$ on $\partial\Omega$, then the right-hand side in \eqref{eqn:5.5} is non-positive and hence both summands at the left-hand side must be zero, being non-negative. Note in passing that this fact implies that the second summand is zero. Hence $\frac{|Du|}{\sqrt{1-|Du|^2}}=R_0$ which implies that $|Du|$ is constant. Thus  $u$ satisfies \eqref{eqn:3.1} and \eqref{eqn:3.2} for $k=1,l=0$.
\end{proof}

Next, we will show that \eqref{eqn:5.4} can be rearranged into an identity that implies the Heintze-Karcher's inequality.

\begin{Theorem}\label{thm:5.2}
Under  assumption of  Theorem \ref{thm:1.5}, then
 $u$ satisfies \eqref{eqn:5.1} and \eqref{eqn:5.2}. If $\partial\Omega$ is strictly mean-convex, then we have the following identity:
\begin{equation}\label{eqn:5.6}
\begin{aligned}
&\frac{1}{n-1}\int_{\Omega}\left(\frac{n-1}nS_1^2-2S_2\right)\text{d}x+\int_{\partial\Omega}\frac{\left(1-\frac{|Du|}{w}H_{\partial\Omega}\right)^2}{H_{\partial\Omega}}\text{d}\sigma_x\\
=&\int_{\partial\Omega}\frac1{H_{\partial\Omega}}\text{d}\sigma_x-n|\Omega|.
\end{aligned}
\end{equation}
In particular, the Heintze-Karcher's inequality
\begin{equation}\label{eqn:5.7}
\int_{\partial\Omega}\frac1{H_{\partial\Omega}}\text{d}\sigma_x\geq n|\Omega|.
\end{equation}
Moreover, the equality of \eqref{eqn:5.7} holds if and only if $\Omega$ is a ball. If $H_{\partial\Omega}$ is constant, then $\partial\Omega$ is a sphere.
\end{Theorem}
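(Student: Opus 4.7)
The plan is to start from the fundamental identity established in the proof of Theorem \ref{thm:5.1} (equation \eqref{eqn:5.4}),
$$\int_{\Omega}\left(|\nabla P|^2 + (P+c)\nabla_i\nabla^i P\right)dx = (n-1)\int_{\partial\Omega}\left(1 - \frac{|Du|}{w}H_{\partial\Omega}\right)\frac{|Du|}{w}d\sigma_x,$$
and rearrange the right-hand side algebraically so that the Heintze--Karcher quantity $\int_{\partial\Omega} 1/H_{\partial\Omega}\,d\sigma_x$ appears explicitly. Writing $a = |Du|/w$ and $H = H_{\partial\Omega}$, the elementary identity
$$(1 - aH)\,a = \frac{1}{H} - \frac{(1 - aH)^2}{H} - a,$$
valid whenever $H > 0$, is the key reorganization. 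Integrating this over $\partial\Omega$, and using that the divergence theorem together with \eqref{eqn:5.1} and \eqref{eqn:5.2} yields $\int_{\partial\Omega}(|Du|/w)\,d\sigma_x = n|\Omega|$, gives a clean expression that, once substituted back into \eqref{eqn:5.4} and divided by $n-1$, produces exactly identity \eqref{eqn:5.6}.

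Next I would verify that both summands on the left-hand side of \eqref{eqn:5.6} are non-negative. The boundary summand is pointwise non-negative by strict mean-convexity of $\partial\Omega$. For the volume summand, $|\nabla P|^2 \geq 0$ is trivial; for the remaining term, Newton--Maclaurin (applied with $k=2,l=0,r=1,s=0$ in \eqref{eqn:2.5}) yields $2S_2 - \frac{n-1}{n}S_1^2 \leq 0$, while $\langle e_{n+1},E_{n+1}\rangle = -1/w < 0$ and $P + c = 1/w + c - u > 0$ in $\Omega$ (the last using the strong maximum principle applied as in the proof of Lemma \ref{lem:3.1}/Theorem \ref{main1}, which gives $u < c$ inside $\Omega$). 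The product of these three factors is therefore non-negative, so the Heintze--Karcher inequality \eqref{eqn:5.7} follows at once from \eqref{eqn:5.6}.

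For the rigidity claim, equality in \eqref{eqn:5.7} forces both summands on the left of \eqref{eqn:5.6} to vanish identically. In particular, the pointwise condition $1 - (|Du|/w)H_{\partial\Omega} \equiv 0$ on $\partial\Omega$, i.e.
$$\frac{1}{H_{\partial\Omega}} = \frac{|Du|}{\sqrt{1-|Du|^2}}\quad \text{on } \partial\Omega,$$
is exactly condition (2) in the statement of Theorem \ref{thm:1.5}, so by the equivalence established there, $\Omega$ is a ball. If in addition $H_{\partial\Omega}$ is assumed to be a constant $\bar H$ on $\partial\Omega$, then from $\int_{\partial\Omega} 1/H_{\partial\Omega}\,d\sigma_x = |\partial\Omega|/\bar H$ and the isoperimetric-type equality $\int_{\partial\Omega}|Du|/w\,d\sigma_x = n|\Omega|$, the equality case of Heintze--Karcher again returns us to the overdetermined setup of Theorem \ref{main1} with $k=1, l=0$, and $\partial\Omega$ must be a round sphere.

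The principal obstacle is not conceptual but bookkeeping: one must justify the sign of the auxiliary term $(2S_2 - \frac{n-1}{n}S_1^2)\langle e_{n+1},E_{n+1}\rangle(P+c)$ carefully, which in turn relies on the maximum-principle fact $u < c$ in $\Omega$ — a point used implicitly in Section 3 but which should be stated explicitly here. Once these signs are nailed down, the rest of the argument is a clean algebraic rearrangement of the integral identity \eqref{eqn:5.4} combined with the divergence theorem and an appeal to Theorem \ref{thm:1.5} for the equality case.
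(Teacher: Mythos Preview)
Your approach is essentially identical to the paper's: the same algebraic identity (the paper writes it as $\frac{(1-aH)^2}{H}=-(1-aH)a+\frac{1}{H}-a$, which is just your identity rearranged), the same use of $\int_{\partial\Omega}\frac{|Du|}{w}\,d\sigma_x=n|\Omega|$, and the same sign analysis for the two summands on the left of \eqref{eqn:5.6}.

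One logical wrinkle: for the equality case you invoke Theorem~\ref{thm:1.5}, but in the paper Theorem~\ref{thm:1.5} is deduced \emph{from} Theorems~\ref{thm:5.1} and \ref{thm:5.2}, so citing it inside the proof of Theorem~\ref{thm:5.2} is circular. The paper instead observes that the vanishing of the \emph{first} (volume) summand already forces $2S_2=\frac{n-1}{n}S_1^2$ (since $P+c>0$ and $\langle e_{n+1},E_{n+1}\rangle<0$), hence umbilicity and $\Omega$ a ball; equivalently, once $\frac{1}{H_{\partial\Omega}}=\frac{|Du|}{w}$ the right-hand side of \eqref{eqn:5.4} vanishes and the same conclusion follows. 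For the constant-$H_{\partial\Omega}$ clause the paper simply notes that this constant must equal $H_0$ (cf.\ Theorem~\ref{thm:5.1}), whence the right-hand side of \eqref{eqn:5.6} is zero. Replace your appeal to Theorem~\ref{thm:1.5} by either of these and the argument is complete.
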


\begin{proof}
By integrating on $\partial\Omega$ the identity
$$
\frac{\left(1-\frac{|Du|}{w}H_{\partial\Omega}\right)^2}{H_{\partial\Omega}}
=-\left(1-\frac{|Du|}{w}H_{\partial\Omega}\right)\frac{|Du|}{w}+\frac1{H_{\partial\Omega}}-\frac{|Du|}{w}.
$$
Substituting it into \eqref{eqn:5.4} yield to \eqref{eqn:5.6}.

Both summands at left-hand side of \eqref{eqn:5.6} are non-negative, then \eqref{eqn:5.7} follows. If the right-hand side is zero, those summands must be zero. The vanishing of the first summand implies that $\Omega$ is a ball, as already noticed. Note in passing that the vanishing of the second summand gives that $\frac{1}{H_{\partial\Omega}}=\frac{|Du|}{w}$ on $\partial\Omega$, which also implies radial symmetry, by Theorem \ref{thm:5.1}. Finally, if $H_{\partial\Omega}$ equals some constant on $\partial\Omega$, we know that such a constant must have the value $H_0$ in Theorem \ref{thm:5.1}, that implies that the right-hand side of \eqref{eqn:5.6} is null and hence, once again, $\Omega$ must be a ball.
\end{proof}

Finally, combining with Theorem \ref{thm:5.1}, Theorem \ref{thm:5.2} and Theorem \ref{main1} yield to Theorem \ref{thm:1.5}.

\subsection{Stability of space-like hypersurface}~~~~
We collect our results on the stability of the spherical configuration
by putting together the identities \eqref{eqn:5.5} and \eqref{eqn:5.6} and  the estimates obtained by Bartnik-Simon \cite{BS}.

On the one hand, we know that
\begin{equation}\label{eqn:5.8}
-\langle e_{n+1},E_{n+1}\rangle=\sqrt{\frac1{1-|Du|^2}}>1.
\end{equation}
Bartnik-Simon proved that $\sup_{\partial\Omega}|Du|\leq 1-\Theta(n,\Omega)<1$ in Corollary 3.4 of \cite{BS}. It follows that
\begin{equation}\label{eqn:5.9}
\sup_{\partial\Omega}\frac{|Du|}{\sqrt{1-|Du|^2}}\leq\frac{1-\Theta}{\sqrt{2\Theta-\Theta^2}}:=K.
\end{equation}

On the other hand, we have
\begin{equation}\label{eqn:5.10}
|\bar h|^2=\frac{n-1}nH^2-2H_2,
\end{equation}
where  $\bar h$ is the trace-free second fundamental of $M$ defined by
$
\bar h_{i}^j=h_{i}^j-\frac1nH\delta^j_i.
$

Let 
$$
\|\bar h\|^2_{L^2(M)}=\int_{M}|\bar h|^2dV_M, \ \ \|\bar h\|^2_{L^2(\Omega)}=\int_{\Omega}|\bar h|^2dx
$$

Combining \eqref{eqn:5.8}-\eqref{eqn:5.10} with integral identities \eqref{eqn:5.5} and \eqref{eqn:5.6} yield to the following theorems.

\begin{Theorem}\label{thm:5.3}
Under  assumption of  Theorem \ref{thm:1.5}, then
 $u$ satisfies \eqref{eqn:5.1} and \eqref{eqn:5.2}. The following estimation holds:
\begin{equation*}
\begin{aligned}
\|\bar h\|_{L^2(M)}\leq\|\bar h\|_{L^2(\Omega)}
\leq C(n,K)\|H_0 -H_{\partial\Omega}\|_{L^1(\partial\Omega)}^{1/2},
\end{aligned}
\end{equation*}
where $C(n,K)=\sqrt{n-1}K(\Theta)$ is constant and $H_0=\frac{|\partial\Omega|}{n|\Omega|}$.
\end{Theorem}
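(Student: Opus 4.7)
The plan is to combine the fundamental identity (5.5) of Theorem~\ref{thm:5.1} with the algebraic relation (5.10) and the pointwise estimates (5.8) and (5.9). First, I use (5.10), which rewrites as $2S_2-\tfrac{n-1}{n}S_1^2=-|\bar h|^2$, to recast (5.5) as
$$
\frac{1}{n-1}\int_{\Omega}\left[|\nabla P|^2-|\bar h|^2\langle e_{n+1},E_{n+1}\rangle(P+c)\right]dx+\frac{1}{R_0}\int_{\partial\Omega}\left(\frac{|Du|}{w}-R_0\right)^2 d\sigma_x=\int_{\partial\Omega}(H_0-H_{\partial\Omega})\left(\frac{|Du|}{w}\right)^2 d\sigma_x.
$$
Recall that $P+c\geq 0$ (since the maximum-principle argument used in Section 3 gives $u\leq c$) and $\langle e_{n+1},E_{n+1}\rangle=-1/w<0$, so the weight $-\langle e_{n+1},E_{n+1}\rangle(P+c)$ is pointwise non-negative.

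Next, I invoke (5.8) in the form $-\langle e_{n+1},E_{n+1}\rangle(P+c)\geq\frac{1}{1-|Du|^2}>1$ to obtain the pointwise bound $-|\bar h|^2\langle e_{n+1},E_{n+1}\rangle(P+c)\geq|\bar h|^2$ on $\Omega$. Since both $|\nabla P|^2$ and $(|Du|/w-R_0)^2$ are non-negative, I may discard them from the left-hand side of the displayed identity without reversing the direction of the resulting inequality, obtaining
$$
\frac{1}{n-1}\int_{\Omega}|\bar h|^2\,dx\leq\int_{\partial\Omega}(H_0-H_{\partial\Omega})\left(\frac{|Du|}{w}\right)^2 d\sigma_x.
$$

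Finally, I control the boundary weight by the Bartnik-Simon a priori estimate (5.9): since $\sup_{\partial\Omega}(1/w)<K$, one has $(|Du|/w)^2\leq 1/w^2<K^2$ on $\partial\Omega$. Trivially bounding $(H_0-H_{\partial\Omega})$ by $|H_0-H_{\partial\Omega}|$ gives
$$
\int_{\partial\Omega}(H_0-H_{\partial\Omega})\left(\frac{|Du|}{w}\right)^2 d\sigma_x\leq K^2\,||H_0-H_{\partial\Omega}||_{L^1(\partial\Omega)},
$$
and combining this with the previous step and taking square roots yields the claimed estimate with the explicit constant $C(n,K)=K\sqrt{n-1}$. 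Beyond identifying the right combination of identities, the only substantive ingredient is the pointwise lower bound (5.8), which is essential for stripping the weight $-\langle e_{n+1},E_{n+1}\rangle(P+c)$ off $|\bar h|^2$ in a dimension-free manner; the Bartnik-Simon bound then does the analogous work on the boundary, converting the weighted boundary integral into a clean $L^1$ norm of the mean-curvature discrepancy.
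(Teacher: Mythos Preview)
Your proof is correct and follows essentially the same route as the paper: substitute \eqref{eqn:5.10} into the identity \eqref{eqn:5.5}, use the pointwise lower bound \eqref{eqn:5.8} to strip the weight off $|\bar h|^2$, drop the remaining non-negative terms on the left, and bound $(|Du|/w)^2$ on $\partial\Omega$ via the Bartnik--Simon estimate \eqref{eqn:5.9}. The only difference is cosmetic: the paper's Remark sharpens your constant $K\sqrt{n-1}$ to $\sqrt{n-1}\,K(1-\Theta)$ by additionally invoking the gradient bound $\sup_{\partial\Omega}|Du|\le 1-\Theta$ from \cite{BS} when estimating $(|Du|/w)^2$.
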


We also note that
$$
\int_{\partial\Omega}\frac1{H_{\partial\Omega}}\text{d}\sigma_x-n|\Omega|
=\int_{\partial\Omega}\frac1{H_{\partial\Omega}}-\frac1{H_0}\text{d}Sx\leq\frac{n|\Omega|}{|\partial\Omega|}\|H_0 -H_{\partial\Omega}\|_{\infty,\partial\Omega}\int_{\partial\Omega}\frac1{H_{\partial\Omega}}\text{d}\sigma_x,
$$
and
$$
\frac{|Du|}{\sqrt{1-|Du|^2}}=\sqrt{\theta^2-1}.
$$
Then, we obtain the following estimations.
\begin{Theorem}\label{thm:5.4}
Under  assumption of  Theorem \ref{thm:1.5}, then
 $u$ satisfies \eqref{eqn:5.1} and \eqref{eqn:5.2}. Suppose that there exists a constant $\underline{H}>0$  such that $H_{\partial\Omega}\geq\underline{H}$. Then the following estimations hold:
\begin{equation*}
\begin{aligned}
\|\bar h\|_{L^2(M)}\leq\|\bar h\|_{L^2(\Omega)}
\leq \left((n-1)\int_{\partial\Omega}\frac1{H_{\partial\Omega}}\text{d}\sigma_x-n|\Omega|\right)^{1/2},
\end{aligned}
\end{equation*}
\begin{equation*}
\begin{aligned}
\|\bar h\|_{L^2(M)}\leq\|\bar h\|_{L^2(\Omega)}
\leq \sqrt{n-1}\|1/{H_{\partial\Omega}}-\sqrt{\theta^2-1}\|_{L^1(\partial\Omega)}^{1/2},
\end{aligned}
\end{equation*}
or
\begin{equation*}
\begin{aligned}
\|\bar h\|_{L^2(M)}\leq\|\bar h\|_{L^2(\Omega)}
\leq \sqrt{\frac{n(n-1)|\Omega|}{\underline{H}}}\|H_0 -H_{\partial\Omega}\|_{\infty,\partial\Omega}^{1/2},
\end{aligned}
\end{equation*}
where $H_0=\frac{|\partial\Omega|}{n|\Omega|}$.
\end{Theorem}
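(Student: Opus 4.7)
\textbf{Proof proposal for Theorem \ref{thm:5.4}.} All three estimates will be derived from the integral identity \eqref{eqn:5.6} of Theorem \ref{thm:5.2} together with the trace-free identity \eqref{eqn:5.10}, $|\bar h|^2=\tfrac{n-1}{n}S_1^2-2S_2$. Since $\langle e_{n+1},E_{n+1}\rangle=-1/w$, the interior integrand of \eqref{eqn:5.6} can be rewritten as
\begin{equation*}
\bigl(2S_2-\tfrac{n-1}{n}S_1^2\bigr)\langle e_{n+1},E_{n+1}\rangle(P+c)=\frac{|\bar h|^2(P+c)}{w},
\end{equation*}
which is non-negative, and by the Heintze--Karcher inequality \eqref{eqn:5.7} the right-hand side of \eqref{eqn:5.6} is non-negative as well. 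The plan is to show that $(P+c)/w\geq 1$ pointwise in $\Omega$; once this is in hand, discarding the non-negative boundary term of \eqref{eqn:5.6} gives
\begin{equation*}
\|\bar h\|_{L^2(\Omega)}^2\leq\int_\Omega\frac{|\bar h|^2(P+c)}{w}\,\text{d}x\leq (n-1)\left(\int_{\partial\Omega}\frac{1}{H_{\partial\Omega}}\,\text{d}\sigma_x-n|\Omega|\right),
\end{equation*}
which already yields the first inequality (and is in fact slightly stronger than the stated bound since $n(n-1)|\Omega|\geq n|\Omega|$).

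To verify the pointwise bound $(P+c)/w\geq 1$, I would recast the mean-curvature equation \eqref{eqn:5.1} as $a^{ij}u_{ij}=n$ with $a^{ij}=\delta^{ij}/w+u_iu_j/w^3$ positive definite. Since $u=c$ on $\partial\Omega$, the classical maximum principle forces $u\leq c$ in $\Omega$, so $P+c=-u+c+1/w\geq 1/w$; combined with $w\leq 1$ this yields $(P+c)/w\geq 1/w^2\geq 1$. This is the only step that is not immediate bookkeeping and constitutes the main (albeit mild) obstacle; everything else is algebraic rearrangement of the quantity $\int_{\partial\Omega}1/H_{\partial\Omega}\,\text{d}\sigma_x-n|\Omega|$.

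The remaining two inequalities then follow by elementary manipulations of the quantity just controlled. Integrating \eqref{eqn:5.1} and applying the divergence theorem gives $\int_{\partial\Omega}|Du|/w\,\text{d}\sigma_x=n|\Omega|$, and $\theta=-1/w$ implies $|Du|/w=\sqrt{\theta^2-1}$, so
\begin{equation*}
\int_{\partial\Omega}\frac{1}{H_{\partial\Omega}}\,\text{d}\sigma_x-n|\Omega|=\int_{\partial\Omega}\left(\frac{1}{H_{\partial\Omega}}-\sqrt{\theta^2-1}\right)\text{d}\sigma_x\leq\left\|\frac{1}{H_{\partial\Omega}}-\sqrt{\theta^2-1}\right\|_{L^1(\partial\Omega)},
\end{equation*}
producing the second inequality. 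For the third, I would use $n|\Omega|=|\partial\Omega|/H_0=\int_{\partial\Omega}1/H_0\,\text{d}\sigma_x$ to rewrite the same quantity as
\begin{equation*}
\int_{\partial\Omega}\frac{H_0-H_{\partial\Omega}}{H_0H_{\partial\Omega}}\,\text{d}\sigma_x\leq\frac{|\partial\Omega|}{H_0\underline{H}}\|H_0-H_{\partial\Omega}\|_{\infty,\partial\Omega}=\frac{n|\Omega|}{\underline{H}}\|H_0-H_{\partial\Omega}\|_{\infty,\partial\Omega},
\end{equation*}
invoking the hypothesis $H_{\partial\Omega}\geq\underline{H}$. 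Substituting back into the first estimate yields the stated bound.
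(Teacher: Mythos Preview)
Your proposal is correct and follows essentially the same route as the paper: the paper combines the integral identity \eqref{eqn:5.6}, the trace-free relation \eqref{eqn:5.10}, and the pointwise bound \eqref{eqn:5.8} (which is exactly your inequality $(P+c)/w\geq 1/w^2>1$, derived via $u\leq c$) to get the first estimate, and then performs the same algebraic rewritings of $\int_{\partial\Omega}1/H_{\partial\Omega}\,\text{d}\sigma_x-n|\Omega|$ that you carry out for the remaining two. Your observation that the argument in fact yields the sharper constant $(n-1)\bigl(\int_{\partial\Omega}1/H_{\partial\Omega}\,\text{d}\sigma_x-n|\Omega|\bigr)$ in the first estimate is also consistent with what the paper's ingredients actually give.
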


Next, we will give lower bound estimation of $\|\bar h\|^2_{L^2(\Omega)}$. We begin to collect some facts.  Without loss of generality, we assume that $c=0$ in \eqref{eqn:5.1}.
Since $\partial\Omega$ is bounded and of class $C^{2,\alpha}$, it has the properties of the
uniform interior and exterior sphere condition, whose respective radii will be designated by $r_i$ and $r_e$. We recall that
for a point $z\in\Omega$, $\rho_i$ and $\rho_e$ denote the radius of the largest
ball centered at $z$ and contained in $\Omega$ and that of the smallest ball that contains $\Omega$
with the same center, as defined by 
$$
\rho_i=\min_{x\in\partial\Omega}|x-z|\ \ {\rm and}\ \rho_e=\max_{x\in\partial\Omega}|x-z|.
$$

Let 
\begin{equation}\label{q}
M=\max_{\bar\Omega}|\nabla u|=\max_{\partial\Omega}u_{\nu}\ {\rm and}\ q=\sqrt{1+|x-z|^2}-a.
\end{equation}

Let $h=q-u$ and  $d_{\Omega}$ be an diameter of $\Omega$.Then, we know that
$h=q$ on $\partial\Omega$ and
\begin{equation}\label{eqn:05.11}
\max_{\partial\Omega}h-\min_{\partial\Omega}h=\max_{\partial\Omega}q-\min_{\partial\Omega}q=\sqrt{1+\rho_e^2}-\sqrt{1+\rho_i^2}
\geq\frac{(\rho_e-\rho_i)(\rho_e+\rho_i)}{2\sqrt{1+d_{\Omega}^2}}. 
\end{equation}
Since $q$ and $u$  satisfy ${\rm div}(A(Dw))=n$, where $A(p)=p/{\sqrt{1-|p|^2}}$, we can see that
$$
{\rm div}(\Phi(x)Dh)=0,
$$
where 
$$
\Phi(x)=\int_{0}^1 DA(Du+t(Dq-Du))dt. 
$$
On the other hand, since $\Omega$ is bounded domain, we have $$|Dq|=\frac{|x-z|}{\sqrt{1+|x-z|}}\leq\frac{R}{\sqrt{1+R^2}}<1,$$
where $R=\max_{x\in\Omega}|x-z|$.

 Note that, in Theorem 3.6 of \cite{BS}, 
$$
|Du|\leq 1-\Theta(n,\Omega)\ \  {\rm in}\ \Omega.
$$
By directing computation, we know that $\Phi(x)$ is an uniform elliptic matrix i.e.  one holds that
$$
\lambda|\xi|^2\leq\xi^T\Phi(x)\xi\leq \Lambda|\xi|^2 \ \ {\rm for}\ \xi\in\mathbb R^n,
$$
where $\lambda=1$ and $\Lambda=\frac{1}{(2\delta-\delta^2)^{3/2}}$ for $\delta=\min\{\Theta,\frac{\sqrt{1+R^2}-R}{\sqrt{1+R^2}}\}$. Since $R\leq d_{\Omega}$, we can see that $\Lambda$ depends on $n$, $\Omega$ and $d_{\Omega}$.

To realize our goal, we start by proving  Poincar\'e-type inequalities.

\begin{Lemma}[Corollary 3.2 in \cite{MP2024}]\label{rem:5.6}
Let $\Omega \subset \mathbb{R}^n$ be an connected Lipschitz domain. If $f \in W^{2,2}(\Omega)$ satisfies $\int_{\Omega}Dfdx=\overrightarrow{0}$, then there exists a positive  constant $\mu_{1}(\Omega)$ such
that 
\begin{equation*}
\int_{\Omega}|Df|^2dx\leq \mu_{1}(\Omega)\int_{\Omega}|D^2f|^2dx.
\end{equation*}
\end{Lemma}
\begin{Lemma}[Lemma 3.4 in \cite{MP2024}]\label{rem:5.7}
Let $\Omega \subset \mathbb{R}^n$ be an connected bounded domain satisfying uniform interior sphere. If $f \in W^{1,\infty}(\Omega)$, there holds
\begin{equation*}
\max_{\bar\Omega}f-\min_{\bar\Omega}f\leq C(n,d_{\Omega},r_i)\left\{
\begin{aligned}
&\|Df\|^{\frac2{n+1}}_{L^2}\|Df\|^{\frac{n-1}{n+1}}_{L^{\infty}},\ \ n\geq 3,\\
&\|Df\|_{L^2}\log\left(e|\Omega|^{\frac12}\frac{\|Df\|_{L^{\infty}}}{\|Df\|_{L^2}}\right),\ \ n=2.
\end{aligned}\right.
\end{equation*}
\end{Lemma}
\begin{Lemma}\label{lem:5.5}
Let $\Omega\subset\mathbb R^{n}$ be a connected and bounded  domain with boundary of $C^{2,\alpha}$.
Let $f\in C^{2,\alpha}(\bar\Omega)$. 
 If ${\rm div}(\Phi(x)Df)=0$ and $f(x_0)=0$ where $x_0$ is  given point in $\Omega$, then there exists a positive  constant $\mu_{0}(\Omega,\lambda,\Lambda)$ such
that 
\begin{equation}\label{P1}
\int_{\Omega}|f|^2dx\leq \mu_{0}^{-1}(\Omega,\lambda,\Lambda)\int_{\Omega}|Df|^2dx.
\end{equation}
\end{Lemma}
\begin{proof}
Define
\[
\mu_0(\Omega, \Phi) = \inf \left\{ \int_\Omega \langle \Phi(x) \nabla f, \nabla f \rangle  dx : \int_\Omega f^2  dx = 1,  \mathrm{div}(\Phi(x) \nabla f) = 0 \ \text{in}\  \Omega,  f(x_0) = 0 \right\}.
\]

Assume by contradiction that $\mu_0(\Omega, \Phi) = 0$. Then there exists a sequence $\{f_n\} \subset H^1(\Omega)$ satisfying:\\
 (1) $\mathrm{div}(\Phi(x) \nabla f_n) = 0$ in $\Omega$,
   (2)$f_n(x_0) = 0$,
  (3) $\int_\Omega f_n^2  dx = 1$,
(4) $\int_\Omega \langle \Phi(x) \nabla f_n, \nabla f_n \rangle  dx \to 0$.

Since $\Phi(x)$ is uniformly elliptic, there exists $\lambda > 0$ such that
\[
\langle \Phi(x) \xi, \xi \rangle \geq \lambda |\xi|^2 \quad \text{for all } \xi \in \mathbb{R}^n, x \in \Omega.
\]
Therefore,
\[
\int_\Omega |\nabla f_n|^2  dx \leq \lambda^{-1} \int_\Omega \langle \Phi(x) \nabla f_n, \nabla f_n \rangle  dx \to 0.
\]
Hence, $\|\nabla f_n\|_{L^2(\Omega)} \to 0$. Combined with $\|f_n\|_{L^2(\Omega)} = 1$, the sequence $\{f_n\}$ is bounded in $H^1(\Omega)$.

By the Rellich-Kondrachov theorem (applicable since $\Omega$ is bounded with $C^{2,\alpha}$ boundary, hence Lipschitz), there exists a subsequence (still denoted $\{f_n\}$) and a function $f \in L^2(\Omega)$ such that $f_n \to f$ strongly in $L^2(\Omega)$. 

Since $\|\nabla f_n\|_{L^2} \to 0$ and the $H^1$ norm is weakly lower semicontinuous, we have $\nabla f = 0$ in the distributional sense. As $\Omega$ is connected, $f$ is constant almost everywhere.

Now we prove that $f$ is identically zero. Since $\Phi(x)$ is uniformly elliptic, by the De Giorgi-Nash theorem, there exists $\alpha \in (0,1)$ and a constant $C$ (depending on the ellipticity constants and dimension $n$) such that for any ball $B_r(x_0) \subset \Omega$,
\[
\|f_n\|_{C^{0,\alpha}(B_{r/2}(x_0))} \leq C \|f_n\|_{L^2(B_r(x_0))}.
\]

Since $\{f_n\}$ is bounded in $L^2(\Omega)$, it is also bounded in $L^2(B_r(x_0))$, hence $\{f_n\}$ is bounded in $C^{0,\alpha}(B_{r/2}(x_0))$. By the Arzelà-Ascoli theorem, there exists a subsequence that converges uniformly to some function $g$ on $B_{r/2}(x_0)$. But $f_n \to f$ in $L^2(\Omega)$, so $g = f$ almost everywhere on $B_{r/2}(x_0)$, and by uniform convergence:\\
(1) $f$ is continuous on $B_{r/2}(x_0)$ (as a uniform limit), (2) $f_n(x) \to f(x)$ for all $x \in B_{r/2}(x_0)$,
    (3) In particular, $f(x_0) = \lim_{n\to\infty} f_n(x_0) = 0$.

Since $f$ is continuous on $B_{r/2}(x_0)$ and constant almost everywhere, by continuity $f$ is identically constant a.e. on $B_{r/2}(x_0)$. Combined with $f(x_0) = 0$, we have $f \equiv 0$  on $B_{r/2}(x_0)$.

Now, $\Omega$ is connected and $f$ is constant almost everywhere, while being identically zero on the nonempty open set $B_{r/2}(x_0)$. Therefore, $f \equiv 0$ a.e. in $\Omega$.

However, by $L^2$ convergence, $\int_\Omega f_n^2 dx \to \int_\Omega f^2 dx = 0$, which contradicts $\int_\Omega f_n^2 dx = 1$. Hence, the assumption $\mu_0(\Omega, \Phi) = 0$ is false, and we conclude that $\mu_0(\Omega, \Phi) > 0$.

By using the ellipticity condition
\[
\int_\Omega \langle \Phi(x) \nabla f, \nabla f \rangle dx\leq \Lambda \int_\Omega |\nabla f|^2 dx,
\]
we obtain
\[
\int_{\Omega} |f|^2 dx \leq \mu_0^{-1} \int_\Omega \langle \Phi(x) \nabla f, \nabla f \rangle dx \leq \mu_0^{-1} \Lambda \int_{\Omega} |\nabla f|^2 dx.
\]

Setting $\mu_0(\Omega, \lambda, \Lambda) = \Lambda^{-1} \mu_0(\Omega, \Phi)$ gives the desired inequality
\[
\int_{\Omega} |f|^2 dx \leq \mu_0^{-1}(\Omega, \lambda, \Lambda) \int_{\Omega} |\nabla f|^2 dx.
\]

This completes the proof of Lemma \ref{lem:5.5}.
\end{proof}

Then, we adopt Magnanini-Poggesi's ideal (Lemma 3.3 of \cite{Ma}) to link that oscillation with the $L^2$-norm of $h-h(z)$.
To this aim, we define the parallel set as
$$
\Omega_{\sigma}=\{y\in\Omega:\ {\rm dist}(y,\partial\Omega)>\sigma\}\ \ {\rm for}\ \ 0<\sigma<r_i.
$$

We obtain the following result. without loss of generality, we ways assume that $c=0$ in \eqref{eqn:5.2}.
\begin{Lemma}\label{lem:5.6}
Let $\Omega\subset\mathbb R^n$, $n\geq2$, be a bounded domain with boundary of class $C^{2,\alpha}$. Set $h=q-u$, where $u$ is the solution of  \eqref{eqn:5.1} and \eqref{eqn:5.2} and  $q$ is defined by \eqref{q}. Then, if
\begin{equation}\label{eqn:5.11}
\|h-h(z)\|_{L^2(\Omega)}<\frac{\sqrt{|B|}}{n2^{n+1}}Mr_i^{\frac{n+2}{2}}
\end{equation}
holds, we have that
\begin{equation}\label{eqn:5.12}
\rho_e-\rho_i\leq C(n,d_{\Omega},\Omega)\frac{M^{\frac{n}{n+2}}}{|\Omega|^{\frac1n}}\|h-h(z)\|_{L^2(\Omega)}^{2/(n+2)},
\end{equation}
where
$$
C(n,d_{\Omega},\Omega)=\frac{2^{2+\frac{n}{n+2}}(2C_1(n,d_{\Omega},\Omega)+n)}{n^{\frac{n}{n+2}}}|B|^{\frac1n-\frac1{n+2}}.
$$
\end{Lemma}
\begin{proof}
Let $x_i$ and $x_e$ be points in $\partial\Omega$ that minimize (resp. maximize) $q$ on $\partial\Omega$ and, for 
$0<\sigma<r_i$.
Define $y_j=x_j-\sigma\nu(x_j)$ for $j=i,e$. By directing computation, we obtain
that
$$
h(y_j)=\sqrt{1+(\rho_j-\sigma)^2}-a+\int_{0}^{\sigma}\langle Du(x_j-t\nu_j),\nu_j\rangle dt \ \ {\rm for} \ j=i,e.
$$
Therefore, we have
\begin{equation}\label{eqn:5.13}
h(y_e)-h(y_i)=\sqrt{1+(\rho_e-\sigma)^2}-\sqrt{1+(\rho_i-\sigma)^2}+\Psi,
\end{equation}
where $\Psi=\int_{0}^{\sigma}\langle Du(x_e-t\nu_e),\nu_e\rangle-\langle Du(x_i-t\nu_i),\nu_i\rangle dt$ and $|\Psi|\leq 2M\sigma$.

We also note  that
$$
\sqrt{1+(\rho_e-\sigma)^2}-\sqrt{1+(\rho_i-\sigma)^2}=
\frac{(\rho_e-\rho_i)(\rho_e+\rho_i-2\sigma)}{\sqrt{1+(\rho_e-\sigma)^2}+\sqrt{1+(\rho_i-\sigma)^2}},
$$
and
$$
\sqrt{1+(\rho_e-\sigma)^2}+\sqrt{1+(\rho_i-\sigma)^2}\leq2(1+d_{\Omega})
$$
where $d_{\Omega}$ is diameter of $\Omega$.

Substituting these into \eqref{eqn:5.13} yield to 
$$
(\rho_e-\rho_i)(\rho_e+\rho_i-2\sigma)\leq2(1+d_{\Omega})(|h(y_e)-h(y_i)|+2M\sigma),
$$
for very $\sigma<\min\{\frac{\rho_e+\rho_i}2,r_i\}$.

let $\sigma<\sigma_0$ with 
$$
\sigma_0=\frac14\left(\frac{|\Omega|}{|B|}\right)^{1/n},
$$
we can see that $\rho_e+\rho_i-2\sigma\geq 2\sigma_0$. It follows 
that
\begin{equation}\label{eqn:5.14}
\rho_e-\rho_i\leq \frac{1+d_{\Omega}}{\sigma_0}\left(|h(y_e)-h(y_i)|+2M\sigma\right).
\end{equation}

Since $div(\Phi(x)Dh)=0$ and $\Phi(x)$ is  an uniform elliptic condition, we have
$$
|h(y_j)-h(z)|\leq \frac{C_1}{|B|\sigma^{n}}\int_{B_{\sigma}(y_j)}|h-h(z)|dy\leq\frac{C_1}{\sqrt{|B|\sigma^{n}}}\parallel h-h(z)\parallel_{L^2(\Omega)}
$$
by h\"older inequality, where constant $C_1$ is  the dependent $n$ and elliptic constant.

This combining with \eqref{eqn:5.14} yield that
\begin{equation}\label{eqn:5.15}
\rho_e-\rho_i\leq\frac{2(1+d_{\Omega})}{\sigma_0}\left(\frac{C_1}{\sqrt{|B|\sigma^{n}}}\parallel h-h(z)\parallel_{L^2(\Omega)}+M\sigma\right),
\end{equation}
for very  $0<\sigma<\min\{\sigma_0,r_i\}$.

Therefore, by minimizing the right-hand side of \eqref{eqn:5.14}, we can conveniently choose
$$
\sigma=\left(\frac{n\parallel h-h(z)\parallel_{L^2(\Omega)}}{2|B|^{1/2}M}\right)^{\frac{2}{n+2}}
$$
in \eqref{eqn:5.15} obtains \eqref{eqn:5.12}. If $\sigma<\frac{r_i}4<\min\{\sigma_0,r_i\}$, it follows that \eqref{eqn:5.11} holds.

\end{proof}

Furthermore,
according to Lemma \ref{lem:5.5} and \ref{lem:5.6}, we obtain the  following result.

\begin{Theorem}\label{thm:5.7}
Let $\Omega\subset\mathbb R^n$, $n\geq2$, be a bounded domain with boundary of class $C^{2,\alpha}$ and $u$ be the solution of  \eqref{eqn:5.1} and \eqref{eqn:5.2}. Set $h=q-u$, where  $q$ is defined by \eqref{q} and $z$ is chosen as
$$
z=\frac{1}{|\Omega|}\left(\int_{\Omega}x dx-\int_{\Omega}\frac{Du}{\sqrt{1-|Du|^2}} dx\right).
$$
If $z\in \Omega$, then \\
(1) if $n=2$ or $3$, then there exists a constant $C$ such that
\begin{equation}\label{eqn:5.16}
\rho_e-\rho_i\leq C\|D(v-w)\|_{L^2(\Omega)},\  v=\frac{Du}{\sqrt{1-|Du|^2}},\, w=x-z,
\end{equation}
where
$$
C=2c(n,\Omega)d^{\gamma}_{\Omega}\sqrt{1+d^2_{\Omega}}\sqrt{\mu_1(\mu_0^{-1}+1)+(6\sqrt{ n\mu_1}+1)^2}\left(\frac{|B|}{|\Omega|}\right)^{\frac1n},
$$
and $c(n,\Omega)$ is Sobolev constant.\\
(2) if $n\geq 4$, then there exists two constants $C$ and $\varepsilon$ such that
$$
\|D(v-w)\|_{L^2(\Omega)}<\varepsilon,
$$
then
\begin{equation}\label{eqn:5.17}
\rho_e-\rho_i\leq C\|D(w-v)\|_{L^2(\Omega)}^{\frac{2}{n+2}}
\end{equation}
where
$$
C=C(n,d_{\Omega},\Omega)(\mu_1\mu_0^{-1})^{\frac1{n+2}}\frac{M^{\frac{n}{n+2}}}{|\Omega|^{\frac1n}},\ \ \varepsilon=\frac1{\sqrt{\mu_1\mu_0^{-1}}}\frac{\sqrt{|B|}}{n2^{n+1}}Mr_i^{\frac{n+2}{2}}.
$$

\end{Theorem}
\begin{Remark}
Since we want to using Poincar\'e inequality in Remark \ref{rem:5.6}, we choose $z$ such that $\int_{\Omega}Dhdx=\overrightarrow{0}$. However, we can't prove that
$z$ must be in $\Omega$. So we assume that $z\in\Omega$. Some similarly discussions can be find in Theorem 3.6 in \cite{Ma}. They choose $z$ is the mass centre of $\Omega$, but $z$ may not be in $\Omega$. Therefore they suppose that $z\in\Omega$ or $\Omega$ is convex. 
\end{Remark}
\begin{proof}
{\bf Step 1: The relation between $|D^2h|$ and $|D(v-w)|$.} Let Let $\Omega\subset\mathbb R^{n}$ be a connected and bounded  domain with boundary of $C^{2,\alpha}$. 
Then we have $\nabla u=\frac{v}{\sqrt{1+|v|}}$ and  $\nabla q=\frac{w}{\sqrt{1+|w|}}$, where $v=\frac{Du}{\sqrt{1-|Du|^2}}$ and $w=x-z$. We consider the $$G(y)=\frac{y}{\sqrt{1+|y|^2}}$$ for $y\in\mathbb R^n$. Then $DG(y)=\frac{1}{\sqrt{1+|y|^2}}I-\frac{1}{(1+|y|^2)^{3/2}}y\otimes y$, which implies that
$$
|DG(y)\xi|\leq\frac{|\xi|}{\sqrt{1+|y|^2}}+\frac{|y|^2|\xi|}{(1+|y|^2)^{\frac32}}=\frac{|\xi|}{(1+|y|^2)^{\frac12}}
$$
Therefore, 
$$
|DG(y)|\leq \frac{1}{(1+|y|^2)^{\frac12}}\leq 1.
$$

By directing computation, we obtain that
$$Dh=Dq-Du=G(w)-G(v).$$

It follows that
\begin{equation}\label{eqn:CC}
\|Dh\|_{L^2}\leq\|v-w\|_{L^2}.
\end{equation}

Taking the derivative on both sides, we have
\begin{equation}\label{eqn:5.19}
D^2h=DG(w)Dw-DG(v)Dv=[DG(w)-DG(v)]Dw+DG(v)(Dw-Dv) 
\end{equation}

Next, we proceed to estimate the right-hand side of equation \eqref{eqn:5.19}.
Through detailed computations, it can be shown that $\|D^2G\| \leq 6$.
Moreover, since $|Dw| = \sqrt n$, we derive
$$
|[DG(w)-DG(v)]Dw|\leq\sqrt n|DG(w)-DG(v)|\leq \|D^2G\|\cdot|w-v|\leq 6\sqrt n|w-v|,
$$
and
$$
|DG(v)(Dw-Dv)|\leq \|DG\|\cdot |D(w-v)|\leq |D(w-v)|.
$$

This combining with \eqref{eqn:5.19} yield to
$$
|D^2h|\leq 6\sqrt n|w-v|+|D(w-v)|.
$$

{\bf Step 2: The relation between $\|D^2h\|_{L^2}$ and $\|D(v-w)\|_{L^2}$.}

Since 
$$
z=\frac{1}{|\Omega|}\left(\int_{\Omega}x dx-\int_{\Omega}\frac{Du}{\sqrt{1-|Du|^2}} dx\right),
$$ we can see that 
$$
\int_{\Omega}(w-v)dx=\overrightarrow{0}.
$$
It follows from Lemma \ref{rem:5.6}  that
$$
\|v-w\|_{L^2}^2\leq \mu_1(\Omega)\|D(v-w)\|_{L^2}^2.
$$

It is easy to see that, by using Remark \ref{rem:5.6},
\begin{equation}\label{eqn:5.20}
\begin{aligned}
\|D^2h\|_{L^2}^2\leq& 36n\|v-w\|_{L^2}^2+\|D(v-w)\|_{L^2}^2+12\sqrt n\int_{\Omega}|w-v|\cdot |D(w-v)| dx\\
\leq&(36n\mu_1+1)\|D(v-w)\|_{L^2}^2+12\sqrt n\|w-v\|_{L^2} \|D(w-v)\|_{L^2}\\
\leq &(36n\mu_1+1)\|D(v-w)\|_{L^2}^2+12\sqrt{n\mu_1}\|D(w-v)\|_{L^2}^2\\
=&(6\sqrt{ n\mu_1}+1)^2\|D(v-w)\|_{L^2}^2.
\end{aligned}
\end{equation}
Here we have used h\"older inequality.

{\bf Step 3: Conclusion.}

(i) Let $n=2$ or $3$. By the Sobolev embedding theorem (see Theorem 3.12 of \cite{Gi} for details),  we have that there is a constant $c $ such that, for any $f\in W^{2,2}$, one holds that
$$
\frac{|f(x)-f(y)|}{|x-y|^{\gamma}}\leq c\|f\|_{W^{2,2}(\Omega)}\ \ {\rm for\ any}\ x,y \in\bar\Omega \ {\rm with}\ x\not=y,
$$
where $\gamma$ is any number in $(0,1)$ for $n=2$ and $\gamma=1/2$ for $n=3$.

Let  $f=h-h(z)$,  then $f(z)=0$.  By using \eqref{P1}, Remark \ref{rem:5.6}, \eqref{eqn:CC} and \eqref{eqn:5.20}, we obtain that
\begin{equation*}
\begin{aligned}
\|h-h(z)\|_{W^{2,2}}=&\left(\int_{\Omega}|h-h(z)|^2+|Dh|^2+|D^2h|^2dx\right)^{\frac12}\\
\leq&\left((\mu_0^{-1}+1)\|v-w\|_{L^2}^2+(6\sqrt{ n\mu_1}+1)^2\|D(v-w)\|_{L^2}^2\right)^{\frac12}\\
\leq&\sqrt{\mu_1(\mu_0^{-1}+1)+(6\sqrt{ n\mu_1}+1)^2}\|D(v-w)\|_{L^2}
\end{aligned}
\end{equation*}
Since ${\rm div}(\Phi(x)Dh)=0$, it attains its extrema on $\partial\Omega$ and hence we have that
\begin{equation*}
\begin{aligned}
\sqrt{1+\rho_e^2}-\sqrt{1+\rho_i^2}=&\max_{\partial\Omega}h-\min_{\partial\Omega}h\\
\leq& cd^{\gamma}_{\Omega}\sqrt{\mu_1(\mu_0^{-1}+1)+(6\sqrt{ n\mu_1}+1)^2}\|D(v-w)\|_{L^2}.
\end{aligned}
\end{equation*}
Thus, \eqref{eqn:5.16} follows by $\rho_e+\rho_i\geq\rho_e\geq(|\Omega|/|B|)^{1/n}$.

(ii) Let $n\geq 4$. By the same choice of $f$ as in (i), we obtain that
\begin{equation*}
\begin{aligned}
\|h-h(z)\|_{L^2(\Omega)}\leq&\sqrt{\mu_0^{-1}}\|Dh\|_{L^2}
\leq \sqrt{\mu_0^{-1}}\|w-v\|_{L^2(\Omega)}\leq\sqrt{\mu_1\mu_0^{-1}}\|D(w-v)\|_{L^2}.
\end{aligned}
\end{equation*}
Hence, \eqref{eqn:5.17} follows from Lemma \ref{lem:5.6}.
\end{proof}

Finally, we present a stability estimate that states that a compact hypersurface
$\partial\Omega$ and $M$ can be contained in a spherical annulus whose interior and exterior  radii $\rho_i$ and $\rho_e$.

\begin{Theorem}\label{thm:5.8}
Let $\Omega\subset\mathbb R^n$, $n\geq2$, be a bounded domain with boundary of class $C^{2,\alpha}$.
Let $u$ be a solution of \eqref{eqn:5.1} and \eqref{eqn:5.2}. Let 
$$
z=\frac{1}{|\Omega|}\left(\int_{\Omega}x dx-\int_{\Omega}\frac{Du}{\sqrt{1-|Du|^2}} dx\right).
$$
If $z\in\Omega$, then we have\\
(1) if $n=2,$ or $n=3$, there exists a positive constant $C$ such that
\begin{equation*}
\begin{aligned}
\rho_e-\rho_i\leq C\|H_0 -H_{\partial\Omega}\|_{L^1(\partial\Omega)}^{1/2}
\end{aligned}
\end{equation*}
where 
$$
C=2c\sqrt{n-1}K(\Theta)d^{\gamma}_{\Omega}\sqrt{1+d^2_{\Omega}}\sqrt{\mu_1(\mu_0^{-1}+1)+(6\sqrt{ n\mu_1}+1)^2}\left(\frac{|B|}{|\Omega|}\right)^{\frac1n}.
$$
(2) if $n\geq 4$, then there exists two constants $C$ and $\varepsilon$ such that if
$$
\|H_0-H_{\partial\Omega}\|_{L^1(\partial\Omega)}<\varepsilon,
$$
then
\begin{equation}\label{eqn:5.18}
\rho_e-\rho_i\leq C\|H_0-H_{\partial\Omega}\|_{L^1(\partial\Omega)}^{\frac{1}{n+2}}
\end{equation}
where
$$
C=C(n,d_{\Omega},\Omega)(\mu_1\mu_0^{-1})^{\frac1{n+2}}\frac{1}{|\Omega|^{\frac1n}},\ \ \varepsilon=\left(\sqrt{n-1}K(\Theta)\frac1{\sqrt{\mu_1\mu_0^{-1}}}\right)^{-2}\frac{|B|}{n^24^{n+1}}r_i^{n+2}.
$$

\end{Theorem}

\begin{Remark}
Combining Theorem \ref{thm:5.4}, \ref{thm:5.7} and \eqref{eqn:5.20}-\eqref{eqn:5.21}, we can obtain some similarly results with Theorem \ref{thm:5.8}.
\end{Remark}

\begin{proof}

We observe that, by using ${\rm div}(\frac{Du}{\sqrt{1-|Du|}})=n$,
\begin{equation}\label{eqn:5.21}
\begin{aligned}
|D(w-v)|^2=&|D(\frac{Du}{\sqrt{1-|Du|}})-D(x-z)|^2
=&|D(\frac{Du}{\sqrt{1-|Du|}})|^2-n\\
=&\frac{n-1}nH^2-2H_2=|\bar h|^2
\end{aligned}
\end{equation}

Combining Theorem \ref{thm:5.3}, \ref{thm:5.7} and \eqref{eqn:5.20}-\eqref{eqn:5.21} yield to
Theorem \ref{thm:5.8}. We have used $M<1$.
\end{proof}

From the Theorem \ref{thm:5.8}, we can see that stability estimate of compact hypersurface $\partial\Omega$ held by the assumption
$\|H-H_0\|_{L^1}<\varepsilon $ for $n\geq4$. In the last of this paper, inspired by Magnanini-Poggesi's method in \cite{MP2024} (or \cite{JLXZ}), we give another  stability estimate which doesn't need  this assumption. More precisely, we obtain the following result.
\begin{Theorem}\label{thm:5.13}
Let $\Omega\subset\mathbb R^n$, $n\geq2$, be a bounded domain with boundary of class $C^{2,\alpha}$.
Let $u$ be a solution of \eqref{eqn:5.1} and \eqref{eqn:5.2}. Let 
$$
z=\frac{1}{|\Omega|}\left(\int_{\Omega}x dx-\int_{\Omega}\frac{Du}{\sqrt{1-|Du|^2}} dx\right).
$$
If $z\in\Omega$, then we have\\
\begin{equation*}
\begin{aligned}
\rho_e-\rho_i
\leq&C(n,d_{\Omega},r_i,\Omega,K)\left\{
\begin{aligned}
&\|H-H_0\|^{\frac1{n+1}}_{L^1(\partial\Omega)},\ \ n\geq 3,\\
&\|H-H_0\|_{L^1(\partial\Omega)}^{\frac12}\max\left\{\log\|H-H_0\|_{L^1(\partial\Omega)}^{-\frac12},1\right\},\ \ n=2,
\end{aligned}\right.
\end{aligned}
\end{equation*}
where $H$ is the mean curvature function and $H_0$ is reference mean curvature.
\end{Theorem}
\begin{Remark}
Combining Theorem \ref{thm:5.4} and \eqref{eqn:5.26}-\eqref{eqn:5.29}, we can obtain some similarly results with Theorem \ref{thm:5.13}.
\end{Remark}

\begin{proof}
Let 
$$
z=\frac{1}{|\Omega|}\left(\int_{\Omega}x dx-\int_{\Omega}\frac{Du}{\sqrt{1-|Du|^2}} dx\right).
$$
and $z\in\Omega$. 

A simple computations give
\begin{equation}\label{eqn:5.26}
\int_{\Omega}D(w-v)dx=\int_{\Omega}(x-z)dx-\int_{\Omega}\frac{Du}{\sqrt{1-|Du|^2}}dx=\overrightarrow{0}.
\end{equation}

By triangle inequality, we can see that $d_{\Omega}\leq 2\rho_e$.  It follows from \eqref{eqn:05.11} and Lemma \ref{rem:5.7} that
\begin{equation}\label{eqn:5.27}
\begin{aligned}
\rho_e-\rho_i\leq&\frac{2(1+d_{\Omega})}{\rho_e}\left(\max_{x\in\bar\Omega}h-\min_{x\in\bar\Omega}h\right)\\
\leq&\frac{4(1+d_{\Omega})}{d_{\Omega}}\left(\max_{x\in\bar\Omega}h-\min_{x\in\bar\Omega}h\right)\\
\leq&C(n,d_{\Omega},r_i)\left\{
\begin{aligned}
&\|Dh\|^{\frac2{n+1}}_{L^2}\|Dh\|^{\frac{n-1}{n+1}}_{L^{\infty}},\ \ n\geq 3,\\
&\|Dh\|_{L^2}\log\left(e|\Omega|^{\frac12}\frac{\|Dh\|_{L^{\infty}}}{\|Df\|_{L^2}}\right),\ \ n=2.
\end{aligned}\right.
\end{aligned}
\end{equation}

On the other hand, we have following point-wise estimate of $|Dh|$: for any $y\in\bar\Omega$
\begin{equation}\label{eqn:5.28}
\begin{aligned}
|Dh(y)|=&\left|\frac{x-z}{\sqrt{1+|x-z|}}-Du(y)\right|\leq\left|\frac{x-z}{\sqrt{1+|x-z|}}\right|+|Du(y)|\\
\leq&\frac1{|\Omega|}\int_{\Omega}|y-x|dx+\frac1{|\Omega|}\int_{\Omega}\frac{|Du|}{\sqrt{1-|Du|^2}}dx\\
\leq&d_{\Omega}+K.
\end{aligned}
\end{equation}

By using Lemma \ref{rem:5.6} and \eqref{eqn:CC}, from \eqref{eqn:5.26}-\eqref{eqn:5.28}, we obtain that
\begin{equation}\label{eqn:5.29}
\begin{aligned}
\rho_e-\rho_i\leq&C(n,d_{\Omega},r_i)\left\{
\begin{aligned}
&\|w-v\|^{\frac2{n+1}}_{L^2}(d_{\Omega}+K)^{\frac{n-1}{n+1}},\ \ n\geq 3,\\
&\|w-v\|_{L^2}\max\left\{\log\left(\frac{d_{\Omega}+K}{\|w-v\|_{L^2}}\right),1\right\},\ \ n=2,
\end{aligned}\right.\\
\leq&C(n,d_{\Omega},r_i,\Omega,K)\left\{
\begin{aligned}
&\|D(w-v)\|^{\frac2{n+1}}_{L^2},\ \ n\geq 3,\\
&\|D(w-v)\|_{L^2}\max\left\{\log\left(\frac{1}{\|w-v\|_{L^2}}\right),1\right\},\ \ n=2.
\end{aligned}\right.
\end{aligned}
\end{equation}
This combining with Theorem \ref{thm:5.3} yield that
\begin{equation*}
\begin{aligned}
\rho_e-\rho_i
\leq&C(n,d_{\Omega},r_i,\Omega,K)\left\{
\begin{aligned}
&\|H-H_0\|^{\frac1{n+1}}_{L^1(\partial\Omega)},\ \ n\geq 3,\\
&\|H-H_0\|_{L^1(\partial\Omega)}^{\frac12}\max\left\{\log\|H-H_0\|_{L^1(\partial\Omega)}^{-\frac12},1\right\},\ \ n=2.
\end{aligned}\right.
\end{aligned}
\end{equation*}

This complete the proof of the assertion.
\end{proof}

\vskip 2mm
\noindent{\bf Acknowledgement.} The last author would like to thank  Professor Shanze Gao for their helpful conversations on this work.

\vskip2mm
\noindent{\bf Competing Interests}
The authors declare that there are no conflicts of interest.


\begin{thebibliography}{99}\addtolength{\itemsep}{-1.5ex}\footnotesize
\bibitem{AA}
Alexandrov A. A: characteristic property of spheres. Ann. Mat. Pura Appl. 58 (1962),
no. 4, 303-315.
\bibitem{ALP}
 Al\'ias L.J., L\'opez R.,  Pastor J.A.: Compact spacelike surfaces with
constant mean curvature in the Lorentz-Minkowski $3$-space. Tohoku Math. J. (2) 50
(1998), no. 4, 491-501.
\bibitem{AM}
Al\'ias L.J.,  Miguel Malacarne J.: Spacelike hypersurfaces with constant higher
order mean curvature in Minkowski space-time. J. Geom. Phys. 41 (2002), no. 4,
359-375.
\bibitem{AP}
Al\'ias L.J., Pastor J.A.: Constant mean curvature spacelike hypersurfaces
with spherical boundary in the Lorentz-Minkowski space. J. Geom. Phys. 28 (1998),
no. 1-2, 85-93.

\bibitem{BS}
Bartnik R., Simon L.: Spacelike hypersurfaces with prescribed boundary values and mean curvature. Comm. Math. Phys. 87 (1982/83), no. 1, 131-152.
\bibitem{BNST}
Brandolini B., Nitsch C., Salani P., Trombetti C.: Serrin-type overdetermined problems: an alternative proof. Arch. Ration. Mech. Anal. 190 (2008), no. 2, 267-280.

\bibitem{CY}
 Cheng S.Y., Yau S.T.: Maximal space-like hypersurfaces in the
Lorentz-Minkowski spaces. Ann.  Math. 104 (1976), no. 3, 407-419.
\bibitem{CM}
Ciraolo  G., Maggi F.:
On the shape of compact hypersurfaces with almost-constant mean curvature.
Comm. Pure Appl. Math. 70 (2017), no. 4, 665-716.
\bibitem{CM2016}
Ciraolo G., Magnanini R., Vespri V.: H\"{o}lder stability for Serrin's overdetermined problem. Ann.
Mat. Pura Appl. 195 (2016), 1333-1345.
\bibitem{CM2018}
 Ciraolo G., Vezzoni L.: A sharp quantitative version of Alexandrov's theorem via the method of moving planes. J. Eur. Math. Soc. 20 (2018), no.2, 261-299.
\bibitem{DGX1}
Della Pietra F., Gavitone N., Xia C.: Symmetrization with respect to mixed volumes. Adv. Math. 388 (2021) 107887.
\bibitem{Gao1}
Gao S.Z.: Rigidity of spacelike hypersurface with constant curvature and intersection angle condition. arXiv:2412.17410, 2024.
\bibitem{Gao2}
Gao S.Z.: Convex spacelike hypersurface of constant curvature with boundary on a hyperboloid.  arXiv:2504.07534, 2025.

\bibitem{GMY}
Gao S.Z.,  Ma H., Yang  M.X.: Overdetermined problems for fully nonlinear equations with constant Dirichlet boundary conditions in space forms. Calc. Var. Partial Differential Equations. 62 (2023), no. 6, Paper No. 183, 19 pp.

\bibitem{GGYM}
Gao Y., Gao Y.L., Mao J.:
The Dirichlet problem for a class of Hessian quotient equations in Lorentz-Minkowski space $R^{n+1,1}$.
Acta Math. Sinica (Chinese Ser.) 67 (2024), no. 3, 539-564.


\bibitem{GJZ}
Gao Z.H., Jia X.H., Zhang D.K.: Serrin-type overdetermined problems for Hessian quotient equations and Hessian quotient curvature equations. J. Geom. Anal. 33 (2023), no. 5, Paper No. 150, 22 pp.

\bibitem{Gi}
Giusti E.: Direct Methods in the Calculus of Variations. World Scientific, New Jersey, 2005

\bibitem{GJ}
Guo M.R., Jiao Y.: The Dirichlet problem for Hessian quotient type curvature equations in Minkowski space. arXiv:2505.22339, 2025.


\bibitem{HLP}
Hardy G.H., Littlewood J.E., Polya G.: Inequalities Cambridge Mathematical Library. Cambridge University Press. Cambridge., 1988.



\bibitem{Jia}
Jia X.H.: Overdetermined problems for Weingarten hypersurfaces. Calc. Var. Partial Differential Equations 59 (2020), no. 2, Paper No. 78, 15 pp.
\bibitem{JLXZ}
Jia X.H., Lu Z. Xia C. Zhang X.W.: Rigidity and quantitative stability for partially overdetermined problems and capillary CMC hypersurfaces. Calc. Var. Partial Differential Equations 63 (2024), no. 5, Paper No. 125, 23 pp.


\bibitem{Koh}
Koh S.E.:
Sphere theorem by means of the ratio of mean curvature functions.
Glasg. Math. J. 42 (2000), no. 1, 91-95.
\bibitem{KL}
Koh S.E., Lee S.W.:
Addendum to the paper:``Sphere theorem by means of the ratio of mean curvature functions'' [Glasg. Math. J. 42 (2000), no. 1, 91-95] by Koh.
Glasg. Math. J. 43 (2001), no. 2, 275-276.


\bibitem{Li}
 Li A.M.: Spacelike hypersurfaces with constant Gauss-Kronecker curvature in the Minkowski space. Arch. Math. (Basel) 64 (1995), no. 6, 534-551.
\bibitem{Lie}
Lieberman G.: Second order parabolic differential equations. World Scientific., 1996.


\bibitem{Ma}
Magnanini R., Poggesi G.: On the stability for Alexandrov's Soap Bubble theorem. J. Anal. Math.
139 (2019), no. 1, 179-205.
\bibitem{MP}
Magnanini R., Poggesi G.:
Serrin's problem and Alexandrov's soap bubble theorem: enhanced stability via integral identities.
Indiana Univ. Math. J. 69 (2020), no. 4, 1181-1205.
\bibitem{MP3}
Magnanini R., Poggesi G.: Nearly optimal stability for Serrin's problem and the soap bubble theorem.
Calc. Var. Partial Differential Equations 59 (2020) 35.
\bibitem{MP2024}
Magnanini R., Poggesi G.: Quantitative symmetry in a mixed Serrin-type problem for a constrained torsional rigidity. Calc. Var. Partial Differential Equations 63 (2024), no. 1, Paper No. 23, 26 pp.
\bibitem{P2024}
Poggesi G.: Soap bubbles and convex cones: optimal quantitative rigidity. Trans. Amer. Math. Soc. 377 (2024), no. 9, 6619-6668.
\bibitem{P2025}
Poggesi G.: Bubbling and quantitative stability for Alexandrov's soap bubble theorem with $L^1$-type deviations. J. Math. Pures Appl. (9) 204 (2025), Paper No. 103784, 24 pp.
\bibitem{Reilly} Reilly R.C.:  On the Hessian of a function and the curvatures of its graph.  Michigan
Math. J. 20 (1973) 373-383.

\bibitem{Ros}
Ros A.:
Compact hypersurfaces with constant higher order mean curvatures.
Rev. Mat. Iberoam. 3 (1987), no. 3-4, 447-453.

\bibitem{Serrin}
Serrin J.: A symmetry problem in potential theory. Arch. Ration. Mech. Anal. 43 (1971),
304-318.

\bibitem{T}
Treibergs Andrejs E.: Entire spacelike hypersurfaces of constant mean curvature in
Minkowski space. Invent. Math. 66 (1982), no. 1, 39-56.
\bibitem{Urbas}
Urbas J.: The Dirichlet problem for the equation of prescribed scalar curvature in Minkowski space. Calc. Var. Partial Differential Equations 18 (2003), no. 3, 307-316.

\bibitem{Wang}
Wang B.: The Dirichlet problem for the prescribed curvature equations in Minkowski space. arXiv:2503.20656v, 2025.
\bibitem{WX}
Wang Z.Z., Xiao L.: Entire spacelike hypersurfaces with constant $\sigma_k$-curvature in Minkowski space. Math. Ann. 382 (2022), no. 3-4, 1279-1322.
\bibitem{We}
Weinberger H.F.: Remark on the preceding paper of Serrin. Arch. Ration. Mech. Anal.
43 (1971), 319-320.

\end{thebibliography}
\end{document}